\newcommand*{\rom}[1]{\expandafter\@slowromancap\romannumeral #1@}
\numberwithin{equation}{section}
\newtheorem{thm}{Theorem}[section]
\newtheorem{problem}[thm]{Problem}
\theoremstyle{plain}
\newtheorem{definition}[thm]{Definition}
\newtheorem{lemma}[thm]{Lemma}
\newtheorem{proposition}[thm]{Proposition}
\newtheorem{theorem}[thm]{Theorem}
\newtheorem*{notation*}{Notation}
\newtheorem{remark}[thm]{Remark}
\newcommand\be{\begin{equation}}
\newcommand\ee{\end{equation}}
\newcommand\bea{\begin{eqnarray}}
\newcommand\eea{\end{eqnarray}}
\newcommand\bi{\begin{itemize}}
\newcommand\ei{\end{itemize}}
\newcommand\ben{\begin{enumerate}[(a)]}
\newcommand\een{\end{enumerate}}
\newcommand\bc{\begin{center}}
\newcommand\ec{\end{center}}
\def\ba#1\ea{\begin{align*}#1\end{align*}}
\newcommand{\R}{\ensuremath{\mathbb{R}}}
\newcommand{\C}{\ensuremath{\mathbb{C}}}
\newcommand{\Z}{\ensuremath{\mathbb{Z}}}
\newcommand{\N}{\mathbb{N}}
\newcommand{\E}{\mathbb{E}}
\newcommand{\T}{\mathbb{T}}
\begin{document}
\title[Multicorr. sequences and large returns]{Properties of multicorrelation sequences and large returns under some ergodicity assumptions}



\author{Andreu Ferr\'e Moragues}
\address{Department of Mathematics, Ohio State University, Columbus, OH 43210, USA}
\email{ferremoragues.1@osu.edu}
\maketitle

\begin{abstract}
	We prove that given a measure preserving system $(X,\mathcal{B},\mu,T_1,\dots,T_d)$ with commuting, ergodic transformations $T_i$ such that $T_iT_j^{-1}$ are ergodic for all $i \neq j$, the multicorrelation sequence $a(n)=\int_X f_0 \cdot T_1^nf_1 \cdot \dotso \cdot T_d^n f_d \ d\mu$ can be decomposed as $a(n)=a_{\textrm{st}}(n)+a_{\textrm{er}}(n)$, where $a_{\textrm{st}}$ is a uniform limit of $d$-step nilsequences and $a_{\textrm{er}}$ is a nullsequence (that is, $\lim_{N-M \to \infty} \frac{1}{N-M} \sum_{n=M}^{N-1} |a_{\textrm{er}}|^2=0$). Under some additional ergodicity conditions on $T_1,\dots,T_d$ we also establish a similar decomposition for polynomial multicorrelation sequences of the form $a(n)=\int_X f_0 \cdot \prod_{i=1}^dT_i^{p_{i,1}(n)}f_1\cdot\dotso \cdot \prod_{i=1}^dT_i^{p_{i,k}(n)}f_k \ d\mu$, where each $p_{i,k}: \Z \rightarrow \Z$ is a polynomial map. We also show, for $d=2$, that if $T_1, T_2, T_1T_2^{-1}$ are invertible and ergodic, we have large triple intersections: for all $\varepsilon>0$ and all $A \in \mathcal{B}$, the set $\{n \in \Z : \mu(A \cap T_1^{-n}A \cap T_2^{-n}A)>\mu(A)^3-\varepsilon\}$ is syndetic. Moreover, we show that if $T_1, T_2, T_1T_2^{-1}$ are totally ergodic, and we denote by $p_n$ the $n$-th prime, the set $\{n \in \N : \mu(A \cap T_1^{-(p_n-1)}A \cap T_2^{-(p_n-1)}A)>\mu(A)^3-\varepsilon\}$ has positive lower density.
\end{abstract}
\section{Introduction}
In this paper we obtain, under some ergodicity assumptions on the involved transformations, about the multicorrelation sequence $I_{\vec{f},d}(n)$ which is defined as follows. Let $(X,\mathcal{B},\mu)$ be a probability space, let $T_1,\dots,T_d: X \rightarrow X$ be commuting, invertible, measure preserving transformations, and let $\vec{f}=(f_0,\dots,f_d) \in (L^{\infty}(\mu))^{(d+1)}$. Then we put
\begin{equation}\label{ivector} I_{\vec{f},d}(n):=\int_X f_0 \cdot T_1^{n}f_1 \cdot \dotso \cdot T_d^n f_d \ d\mu.
\end{equation}
When $d=1$, Herglotz-Bochner's theorem implies that the correlation sequence $I_{\vec{f},1}(n)$ is given by the Fourier coefficients of some finite complex measure $\sigma$ on $\T$ (see \cite{khintchine} and \cite{koopmanvonneumann}). Decomposing $\sigma$ into its atomic part, $\sigma_a$, and continuous part, $\sigma_c$, we get
\begin{equation}\label{decompherglotz}
	I_{\vec{f},1}(n)=\int_X f_0 \cdot T_1^nf_1 \ d\mu=\int_{\T} e^{2\pi i nx} \ d\sigma(x)=\int_{\T} e^{2\pi i n x} d\sigma_a(x)+\int_{\T} e^{2\pi i n x} d\sigma_c(x)=\psi(n)+\nu(n).\end{equation}
In formula \eqref{decompherglotz}, the sequence $\psi(n)$  is \emph{almost periodic}, which, equivalently, means that there exists a compact abelian group $G$, a continuous function $\phi: G \rightarrow \C$, and $a \in G$ such that $\psi(n)=\phi(a^n)$.
\\ \\
On the other hand, the sequence, $\nu(n)$ is a nullsequence in the sense of the following definition:
\begin{definition}\label{nullseq1} Let $a: \Z \rightarrow \C$ be a bounded sequence. We say that $a$ is a \emph{null-sequence} if
	\begin{equation}\label{besicovitch} \lim_{N-M \to \infty} \frac{1}{N-M} \sum_{n=M}^{N-1} |a(n)|^2=0.
	\end{equation}
\end{definition}
It is natural to inquire whether for general $d \in \N$ the multicorrelation sequence $I_{\vec{f},d}(n)$ can be represented as a sum of a ``generalized almost periodic sequence'' and a nullsequence. A very satisfactory answer to this question was obtained by Bergelson, Host and Kra in \cite{bhk} for the case $T_i=T^i$. Before formulating their result, we need to introduce some definitions pertaining to systems on nilmanifolds:
\begin{definition} Let $G$ be a Lie group. For a positive integer $d$, we denote by $G_d$ the $d$-th commutator subgroup of $G$.
	\begin{itemize}
		\item  The group $G$ is a $d$-\emph{step nilpotent} Lie group if $G_{d+1}=\{e\}$. 
		\item Let $\Gamma$ be a discrete cocompact subgroup of $G$. The quotient $X=G/\Gamma$ is a $d$-step nilmanifold if $G$ is a $d$-step nilpotent Lie group. 
		\item Let $X=G/\Gamma$ be a $d$-step nilmanifold and $a \in G$. We call the map $T: X \rightarrow X$ given by $T(g\Gamma)=(ag)\Gamma$ a niltranslation.
		\item Finally, let $X$ be a $d$-step nilmanifold, $f \in C(X)$, $T$ a niltranslation, and $x \in X$. The sequence $a(n)=f(T^nx)$ is called a $d$-\emph{step nilsequence}.
	\end{itemize}
\end{definition}
In \cite{bhk} it was proved that 
\begin{equation}\label{bhkdecomp1} \int_X f_0 \cdot Tf_1 \cdot \dotso \cdot T^df_d \ d\mu=a_{\textrm{st}}(n)+\nu(n),
\end{equation}
where $a_{\textrm{st}}(n)$, the ``structured part'', is a uniform limit of $d$-step nilsequences and $\nu(n)$ is a nullsequence (see Theorem 1.9 in \cite{bhk}).
\\ \\
In \cite{leibman1}, Leibman showed that a similar decomposition holds for multicorrelation sequences of the form $b(n)=\int_X f_0 \cdot T^{p_1(n)}f_1 \cdot \dotso \cdot T^{p_d(n)}f_d \ d\mu$, where $p_1,\dots,p_d \in \Z[n]$. He proved that $b(n)$ can be decomposed as a sum of a uniform limit of $r$-step nilsequences and a nullsequence. (The number $r$ depends on the family of polynomials $\{p_1,\dots,p_d\}$.)
\\ \\
For general commuting transformations, the following result was proved by Frantzikinakis in \cite{fra1}:
\begin{theorem}[Theorem 1.1 in \cite{fra1}]\label{fratheorem1}
	Let $d \in \N$. Let $(X,\mathcal{B},\mu)$ be a probability space and $T_1,\dots,T_d: X \rightarrow X$ be commuting measure preserving transformations of $X$. Let $f_0,\dots,f_d \in L^{\infty}(\mu)$ and $\varepsilon>0$. Then, the sequence $I_{\vec{f},d}(n)$ in \eqref{ivector} admits the following decomposition:
	\begin{equation}\label{fradecomp1} I_{\vec{f},d}(n)=a_{\textrm{st}}(n)+a_{\textrm{er}}(n),
	\end{equation}
	where $a_{\textrm{st}}(n)$ is a uniform limit of $d$-step nilsequences, and $a_{\textrm{er}}(n)$, the so-called error sequence, is a bounded sequence with $||a_{\textrm{er}}||_2:=\limsup_{N-M \to \infty} \frac{1}{N-M} \sum_{n=M}^{N-1} |a_{\textrm{er}}(n)|^2\leq \varepsilon$.
\end{theorem}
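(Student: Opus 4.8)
The plan is to reduce, in three steps, to a statement about nilsystems, pushing all inaccuracies into $a_{\mathrm{er}}$. In the first step I would replace each $f_i$ by its conditional expectation onto a characteristic factor; in the second I would approximate these conditional expectations by continuous functions on a genuine $d$-step nilfactor; in the third I would check that the correlation sequence of continuous functions on a $d$-step nilsystem is itself a $d$-step nilsequence, and take this to be $a_{\mathrm{st}}$. For the first step, I would invoke Host's theory of ergodic (box) seminorms for commuting transformations together with his magic-extension construction: these produce, for the system $(X,\mathcal B,\mu,T_1,\dots,T_d)$, a factor $\mathcal Z$ — invariant under every $T_j$ and, jointly in $T_1,\dots,T_d$, a $d$-step pro-nilfactor — together with a seminorm estimate bounding $\limsup_{N-M\to\infty}\frac1{N-M}\sum_{n=M}^{N-1}|I_{\vec g,d}(n)|^2$ by the seminorms of $g_1,\dots,g_d$ (times $\|g_0\|_\infty$), which vanish as soon as the corresponding function is orthogonal to $\mathcal Z$. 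Telescoping $f_0\cdot\prod_iT_i^nf_i-\E[f_0\mid\mathcal Z]\cdot\prod_iT_i^n\E[f_i\mid\mathcal Z]$ into $d+1$ terms, each carrying one factor $f_j-\E[f_j\mid\mathcal Z]$ (the $j=0$ term vanishing because $\prod_iT_i^n\E[f_i\mid\mathcal Z]$ is $\mathcal Z$-measurable), this estimate shows that $I_{\vec f,d}$ differs from $I_{\vec{\tilde f},d}$ by a nullsequence, where $\tilde f_i:=\E[f_i\mid\mathcal Z]$; so we may assume each $f_i$ is $\mathcal Z$-measurable.

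For the second step, I would use that, by the Host--Kra structure theorem applied to each $T_i$ together with the compatibility of the magic extension, $\mathcal Z$ is an inverse limit of factors $\mathbf Y^{(k)}$ that are $d$-step nilmanifolds $W_k/\Lambda_k$ carrying commuting niltranslations $S_1^{(k)},\dots,S_d^{(k)}$ induced by $T_1,\dots,T_d$. I would fix $k$ so large that each $\E[\tilde f_i\mid\mathbf Y^{(k)}]$ is $L^2(\mu)$-close to $\tilde f_i$, and then choose $g_i\in C(W_k/\Lambda_k)$ with $\|g_i\|_\infty\le\|f_i\|_\infty$ that are $L^2(\mu)$-close to $\E[\tilde f_i\mid\mathbf Y^{(k)}]$. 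The essential observation is that an $L^2$-approximation of one function produces a \emph{uniform}-in-$n$ change in the correlation: pulling all but one $L^\infty$-factor out of the integral and applying Cauchy--Schwarz gives $|I_{\vec f,d}(n)-I_{\vec g,d}(n)|\leq C\max_j\|f_j-g_j\|_{L^2(\mu)}$ with $C$ depending only on $\max_i\|f_i\|_\infty$. Choosing the approximations fine enough makes this at most $\varepsilon$, so in particular the resulting error is at most $\varepsilon$ in $\|\cdot\|_2$.

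For the third step, I would show that whenever $\mathbf Y=W/\Lambda$ is a $d$-step nilmanifold with commuting niltranslations $S_1,\dots,S_d$ and $g_0,\dots,g_d\in C(\mathbf Y)$, the sequence $b(n):=\int_{\mathbf Y}g_0\cdot S_1^ng_1\cdots S_d^ng_d\,dm_{\mathbf Y}$ is a $d$-step nilsequence. I would write $b$ through the diagonal embedding $\Delta\colon\mathbf Y\to\mathbf Y^{d+1}$: on the $d$-step nilmanifold $\mathbf Y^{d+1}$, with $R$ the niltranslation by $(e,b_1,\dots,b_d)$ (where $b_i\in W$ represents $S_i$, so that $R^n$ corresponds to $(e,b_1^n,\dots,b_d^n)$ regardless of whether the $b_i$ commute in $W$), one has $b(n)=\int_{\Delta(\mathbf Y)}(g_0\otimes\cdots\otimes g_d)(R^ny)\,dm_{\Delta(\mathbf Y)}(y)$, i.e.\ the integral of a fixed continuous function over a subnilmanifold carried along a niltranslation orbit; by the standard fact that such integrals are nilsequences of the same step (cf.\ \cite{bhk,leibman1}), $b$ is a $d$-step nilsequence. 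Setting $a_{\mathrm{st}}:=b$, a single $d$-step nilsequence and hence a fortiori a uniform limit of such, and $a_{\mathrm{er}}:=I_{\vec f,d}-a_{\mathrm{st}}$ — the sum of the step-one nullsequence and the step-two error — finishes the proof, since then $\|a_{\mathrm{er}}\|_2\leq\varepsilon$.

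I expect the substance of the argument to lie in the first two steps. For a single transformation the Host--Kra factor is a bona fide pro-nilsystem, but for a $\Z^d$-action the structure of characteristic factors is far more delicate; the magic-extension machinery, together with the bookkeeping required both to keep the step equal to $d$ (rather than something larger) and to make every approximation uniform in $n$, is exactly what forces the conclusion to carry an $\varepsilon$-error rather than an exact ``nilsequence plus nullsequence'' splitting. The third step is routine, but it does genuinely need the passage to $\mathbf Y^{d+1}$, since commuting niltranslations need not lift to commuting elements of $W$.
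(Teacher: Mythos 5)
First, a point of orientation: Theorem \ref{fratheorem1} is not proved in this paper at all; it is imported from \cite{fra1} as background, so there is no internal proof to compare yours against, and your proposal has to stand on its own. It does not, and the gap sits exactly where you yourself locate ``the substance of the argument''. In Step 1 you assert that Host's box seminorms together with the magic-extension construction furnish a factor $\mathcal Z$ that is characteristic for the commuting averages and is, \emph{jointly} for $T_1,\dots,T_d$, a $d$-step pro-nilfactor, and Step 2 leans on this again via ``the Host--Kra structure theorem applied to each $T_i$ together with the compatibility of the magic extension''. No such statement is available in the generality of Theorem \ref{fratheorem1}, which assumes no ergodicity whatsoever: Host's results in \cite{host1} bound the averages by box seminorms and, on a suitable (magic) extension, characterize the vanishing of those seminorms in terms of certain invariant $\sigma$-algebras, but they attach no nilpotent structure to that factor; the only theorem giving joint nil-structure for commuting transformations is Frantzikinakis--Kra (Theorem \ref{frak} in this paper), and it requires $T_1,\dots,T_d$ and all $T_iT_j^{-1}$ to be ergodic. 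That added hypothesis is precisely what Theorem \ref{intro1} of the present paper needs in order to obtain a genuine ``nilsequence plus nullsequence'' splitting, so your Step 1 is assuming the very structural input whose absence is the whole difficulty.

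A quick sanity check confirms that Step 1 cannot be correct as stated: if an arbitrary commuting system (or an extension of it) admitted a jointly invariant $d$-step pro-nilfactor characteristic for these averages, then your Steps 2--3 combined with the Cauchy-sequence/uniqueness argument that closes the proof of Theorem \ref{mainthm} here (differences of nilsequence parts for different $\varepsilon$ are nilsequences small in density, hence small everywhere) would eliminate the $\varepsilon$ altogether and resolve Problem \ref{questionintro} of Frantzikinakis and Host \cite{hostfra}, which is open. The $\varepsilon$ in Theorem \ref{fratheorem1} is not bookkeeping slack from the approximations; it reflects the lack of any nil-structure theorem for the characteristic factor of a general $\Z^d$-action, and Frantzikinakis's actual argument in \cite{fra1} never claims such structure --- roughly speaking, it works with the magic (non-nil) factor, approximates the relevant conditional expectation in $L^2$, and reduces to correlation sequences governed by the single-transformation theory, which is where the irremovable $\varepsilon$ enters. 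Your Steps 2 and 3 are essentially sound --- the Cauchy--Schwarz bound giving an error uniform in $n$ is fine, and the integral of $g_0\cdot S_1^ng_1\cdots S_d^ng_d$ over a translated diagonal subnilmanifold is handled by Theorem 2.5 of \cite{leibman1}, though that yields a nilsequence plus a nullsequence rather than a nilsequence on the nose, which is harmless --- but they cannot be launched without the unsupported structural claim of Step 1.
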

Juxtaposing equations \eqref{bhkdecomp1} and \eqref{fradecomp1}, it is natural to ask whether the $\varepsilon$ in Theorem \ref{fratheorem1} can be removed. Indeed, in \cite{hostfra}, Host and Frantzikinakis posed the following question:
\begin{problem}\label{questionintro}[Problem 1 in \cite{hostfra}] Let $(X,\mathcal{B},\mu)$ be a probability space, and let $T_1,T_2: X \rightarrow X$ be measure preserving transformations that commute with one another. Let $f_0,f_1,f_2 \in L^{\infty}(\mu)$. Is it true that the multicorrelation sequence $a: \N \rightarrow \C$ defined by
	\[ a(n):=\int_X f_0 \cdot T_1^nf_1 \cdot T_2^n f_2 \ d\mu, \quad n \in \N\]
	can be decomposed as $a=a_{\textrm{st}}+a_{\textrm{er}}$ where $a_{\textrm{st}}$ is a uniform limit of $2$-step nilsequences and $a_{\textrm{er}}$ is a nullsequence?
\end{problem}
Under some extra ergodicity asssumptions, we shall establish the following theorem as a partial answer to Question \ref{questionintro}:
\begin{theorem}\label{intro1}
	Let $d \in \N$, let $(X,\mathcal{B},\mu)$ be a probability space, and $T_1,\dots,T_d: X \rightarrow X$ be commuting, invertible measure preserving transformations. Let $f_0,\dots,f_d \in L^{\infty}(\mu)$, and suppose that the transformations $T_1,\dots,T_d$ and $T_iT_j^{-1}$ are ergodic (for $1 \leq i\neq j \leq d$). Then, the multicorrelation sequence
	\begin{equation}\label{multi1} a(n):=\int_X f_0\cdot T_1^nf_1 \cdot \dotso \cdot T_d^n f_d \ d\mu
	\end{equation}
	can be decomposed as a sum of a uniform limit of $d$-step nilsequences ($a_{\textrm{st}}$) and a nullsequence ($a_{\textrm{er}}$).
\end{theorem}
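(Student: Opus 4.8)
The plan is to sharpen Theorem \ref{fratheorem1} by using the ergodicity hypotheses to produce a characteristic factor for the averages $\frac1N\sum_{n}T_1^nf_1\cdot\dotso\cdot T_d^nf_d$ that does \emph{not} depend on $\varepsilon$. Concretely, I would produce a $T_i$-invariant factor $\mathcal Z$ of $(X,\mathcal B,\mu)$ which is an inverse limit of $d$-step nilsystems and which is characteristic in the uniform (Besicovitch) sense of Definition \ref{nullseq1}, set
\[ a_{\mathrm{st}}(n):=\int_X \E[f_0\mid\mathcal Z]\cdot T_1^n\E[f_1\mid\mathcal Z]\cdot\dotso\cdot T_d^n\E[f_d\mid\mathcal Z]\,d\mu, \]
and then verify separately that $a-a_{\mathrm{st}}$ is a nullsequence and that $a_{\mathrm{st}}$ is a uniform limit of $d$-step nilsequences.

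The first point -- together with the very existence of such a $\mathcal Z$ -- is where I expect the real difficulty, and where the ergodicity assumptions are indispensable. I would appeal to the theory of ergodic seminorms for commuting transformations (Host) and to the characteristic-factor analysis carried out under exactly these hypotheses (Frantzikinakis--Kra): since $T_1,\dots,T_d$ commute and each $T_i$ and each $T_iT_j^{-1}$ ($i\neq j$) is ergodic, the seminorm factor $\mathcal Z$ attached to these averages is an inverse limit of $d$-step nilsystems, and for $g_i:=f_i-\E[f_i\mid\mathcal Z]$ the corresponding seminorm of $g_i$ vanishes. The quantitative input I would extract is that the underlying van der Corput and seminorm estimates are uniform over windows $[M,N)$, so that $\limsup_{N-M\to\infty}\frac1{N-M}\sum_{n=M}^{N-1}\bigl|\int_X f_0\cdot T_1^nh_1\cdot\dotso\cdot T_d^nh_d\,d\mu\bigr|^2=0$ whenever some $h_i$ has vanishing $\mathcal Z$-seminorm and the remaining $h_j$ lie in $L^\infty(\mu)$. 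Telescoping $f_i=\E[f_i\mid\mathcal Z]+g_i$ over $i$ (the $f_0$-slot being handled the same way, since $\mathcal Z$ is $T_i$-invariant) then expresses $a-a_{\mathrm{st}}$ as a finite sum of such correlations, each carrying at least one $g_i$, so that $a-a_{\mathrm{st}}$ is a nullsequence. For arbitrary commuting $T_i$ the corresponding seminorm factor is not known to be an inverse limit of nilsystems, which is precisely the reason Problem \ref{questionintro} is open in general; the ergodicity of the $T_i$ and $T_iT_j^{-1}$ is exactly what removes this obstruction here.

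For the second point I would first reduce to a single nilmanifold: uniform limits of $d$-step nilsequences form a uniformly closed class, and a function measurable with respect to an inverse limit is approximated in $L^2$ -- hence $a_{\mathrm{st}}$ is approximated uniformly in $n$ -- by functions measurable with respect to a finite stage, so it suffices to treat the case $\mathcal Z=X=G/\Gamma$ a $d$-step nilmanifold with Haar measure, each $T_i$ the niltranslation by some $b_i\in G$, and the $f_i$ continuous (the inverse-limit tail contributing only a further nullsequence). Then, writing $a_{\mathrm{st}}(n)=\int_X F(\widetilde T^{\,n}\Delta x)\,d\mu(x)$ with $F=f_0\otimes\dotso\otimes f_d\in C(X^{d+1})$, $\Delta x=(x,\dots,x)$, and $\widetilde T$ the niltranslation of $X^{d+1}=G^{d+1}/\Gamma^{d+1}$ by $(e,b_1,\dots,b_d)$, I would analyze the orbits $\{\widetilde T^{\,n}\Delta x\}$ through Leibman's equidistribution theorem for polynomial (here even linear) orbits on nilmanifolds: they equidistribute in sub-nilmanifolds $Y_x$, and passing to the $d$-step nilpotent group $\widetilde G=\langle\Delta(G),(e,b_1,\dots,b_d)\rangle\leq G^{d+1}$ one organizes them into a single orbit on $\widetilde G/(\widetilde G\cap\Gamma^{d+1})$ and reads off $a_{\mathrm{st}}$ as a uniform limit of $d$-step nilsequences. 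The mild $x$-dependence of the orbit closures $Y_x$ (they agree off a small set) is absorbed into a nullsequence exactly as in the proof of Theorem 1.9 of \cite{bhk}; indeed this step is essentially the Bergelson--Host--Kra nilsystem computation, with their sequence $n\mapsto(1,\tau^n,\dots,\tau^{dn})$ replaced by the equally linear $n\mapsto(e,b_1^n,\dots,b_d^n)$.
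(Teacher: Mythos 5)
Your route is the paper's route: Host's ergodic seminorms for commuting transformations plus the Frantzikinakis--Kra structure theorem (Theorem \ref{frak}) to show that, under the stated ergodicity hypotheses, $Z_d(X)$ is characteristic in the Besicovitch sense, and then the BHK/Leibman nilsystem computation (diagonal in a power of the nilmanifold, linear orbit $(b_1^n,\dots,b_d^n)$, equidistribution on orbit closures) to handle the structured part. Those two pillars are fine, and your telescoping treatment of $a-a_{\mathrm{st}}$ matches the paper's bound \eqref{bound2}.

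The gap is in your reduction ``it suffices to treat the case $\mathcal Z=X=G/\Gamma$ with $f_i$ continuous.'' Each finite stage of the inverse limit does not hand you a uniform limit of $d$-step nilsequences: it hands you a sum (uniform limit of $d$-step nilsequences) $+$ (nullsequence), and the error incurred in passing from $\mathcal Z$ to a finite stage, and from $\E[f_i\mid\mathcal Z_j]$ to continuous functions, is merely small in $\ell^{\infty}(\Z)$ --- it is \emph{not} a nullsequence, so your parenthetical ``the inverse-limit tail contributing only a further nullsequence'' conflates exactly the two notions whose separation is the whole content of removing the $\varepsilon$ from Theorem \ref{fratheorem1}. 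What you actually obtain is, for every $l$, a decomposition $a=\psi_l+\lambda_l+\delta_l$ with $\psi_l$ a uniform limit of $d$-step nilsequences, $\lambda_l$ a nullsequence and $\|\delta_l\|_{\ell^{\infty}}\le 1/l$, and you must show that the class of sums (uniform limit of nilsequences) $+$ (nullsequence) is closed under such approximations. The fact you invoke --- that uniform limits of nilsequences are uniformly closed --- is true but is not that statement. The needed closure rests on a rigidity property of nilsequences: if a $d$-step nilsequence satisfies $|\psi(n)|\le c$ for all $n$ outside a set of Banach density zero, then $|\psi(n)|\le c$ for every $n$ (by continuity of the defining function and equidistribution of the orbit in its closure). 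With it one gets $\|\psi_l-\psi_r\|_{\ell^{\infty}}\le 1/l+1/r$, so $(\psi_l)$ is uniformly Cauchy and the decomposition survives the limit; this is precisely the argument the paper runs in \eqref{decomp1}--\eqref{nilsequence2}, following Leibman. Without this lemma your final step does not close. (A smaller instance of the same slip: your plan asserts that $a_{\mathrm{st}}$, defined via conditional expectations, \emph{is} a uniform limit of $d$-step nilsequences; in general it is only such a limit plus a nullsequence, as your own appeal to the BHK absorption of the orbit-closure dependence indicates, so the final statement must regroup the null parts accordingly.)
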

It is sensible to ask whether Leibman's decomposition for polynomial multicorrelation sequences generalizes to the case of commuting transformations. A result in this direction was provided by Host and Frantzikinakis in Theorem 2.5 in \cite{hostfra}, where the authors showed that for any $\varepsilon>0$, any polynomial multicorrelation sequence of the form
\begin{equation} \tilde{a}(n):=\int_X f_0 \cdot T_1^{p_1(n)}f_1\cdot \dotso \cdot T_d^{p_d(n)}f_d \ d\mu \label{multi}\end{equation} 
admits a decomposition $\tilde{a}=\tilde{a}_{\textrm{st}}+\tilde{a}_{\textrm{er}}$, where $\tilde{a}_{\textrm{st}}$ is a uniform limit of $\ell$-step nilsequences for some $\ell \in \N$, and the error term satisfies $||\tilde{a}_{\textrm{er}}||_2 \leq \varepsilon$. While removal of $\varepsilon$ in $||a_{\textrm{er}}||_2$ in the appropriate generality remains a challenge, under certain ergodicity conditions, our methods can also be pushed to prove that the $\varepsilon$ can be removed. Indeed, we will show
\begin{theorem}\label{intro2bis}
	Let $(X,\mathcal{B},\mu)$ be a probability space and let $T_1,\dots,T_d: X \rightarrow X$ be measure preserving transformations. Assume that the measure preserving transformations $T_1^{a_1}\cdot\dotso\cdot T_d^{a_d}$ are ergodic for all $(a_1,\dots,a_d) \neq \vec{0}$. Let $p_1,\dots,p_k: \Z \rightarrow \Z^d$ be polynomial functions such that $p_i,p_i-p_j$ are not constant for all $i$ and for all $j \neq i$. Let $f_0, f_1,\dots,f_k \in L^{\infty}(\mu)$. Then the sequence 
	\[ a(n):=\int_X f_0\cdot T_{p_1(n)}f_1\cdot \dotso \cdot T_{p_k(n)}f_k \ d\mu\]
	is a sum of a uniform limit of $\ell=\ell(d,p_1,\dots,p_k)$-step nilsequences ($a_{\textrm{st}}$) and a nullsequence $(a_{\textrm{er}}$), where if $p_r(n)=(p_{r,1}(n),\dots,p_{r,d}(n))$, we write $T_{p_r(n)}f_r$ to mean $\prod_{i=1}^d T_i^{p_{r,i}(n)}f_r$.
\end{theorem}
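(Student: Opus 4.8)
The plan is to follow the two–stage architecture used by Bergelson--Host--Kra \cite{bhk}, Leibman \cite{leibman1} and Frantzikinakis \cite{fra1}, and which presumably underlies the proof of Theorem \ref{intro1} in this paper: split each $f_i$ along a suitable pro-nilfactor $\mathcal Z_\ell$, show that every contribution involving a piece orthogonal to $\mathcal Z_\ell$ is a nullsequence, and recognize the surviving $\mathcal Z_\ell$--measurable part as a uniform limit of nilsequences. The point of the stronger ergodicity hypothesis here --- that $T_1^{a_1}\cdots T_d^{a_d}$ is ergodic for every $(a_1,\dots,a_d)\ne\vec 0$ --- is, exactly as in the passage from Theorem \ref{fratheorem1} to Theorem \ref{intro1}, to turn the $\varepsilon$-decomposition of \cite{hostfra} (their Theorem 2.5) into an exact one with a nullsequence error.

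\emph{Stage 1: seminorm control.} The heart of the argument is the claim that there is $\ell=\ell(d,p_1,\dots,p_k)$ so that if $\E[f_i\mid\mathcal Z_\ell]=0$ for some $1\le i\le k$, then $a$ is a nullsequence in the sense of Definition \ref{nullseq1}. Since $|a(n)|^2=a(n)\overline{a(n)}$ is itself a polynomial multicorrelation sequence of the product system $(X\times X,\mu\otimes\mu,T_1\times T_1,\dots,T_d\times T_d)$ with functions $f_i\otimes\bar f_i$ and the same polynomials $p_r$, one bounds $\limsup_{N-M\to\infty}\frac1{N-M}\sum_{n=M}^{N-1}|a(n)|^2$ by running a PET (polynomial exhaustion) induction together with repeated van der Corput and Cauchy--Schwarz estimates; the output is a bound by a (directional) Host--Kra box seminorm of $f_i\otimes\bar f_i$ for the product system, with directions among the $T_1^{a_1}\cdots T_d^{a_d}\times T_1^{a_1}\cdots T_d^{a_d}$, $\vec a\ne\vec 0$. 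The assumption that $p_r$ and $p_r-p_{r'}$ are non-constant is what prevents a van der Corput shift from merging two functions at a common position or pinning a function at a fixed position --- either of which would leave a non-vanishing term --- so that only directions with $\vec a\ne\vec 0$ occur; and the total-ergodicity hypothesis is then exactly what makes each such transformation ergodic, which validates the identity $\|f\otimes\bar f\|_{U^s(S\times S)}=\|f\|_{U^{s+1}(S)}^2$ (valid when $S$ is ergodic) applied in each direction $S=T_1^{a_1}\cdots T_d^{a_d}$. Hence the bound on $|a(n)|^2$ is controlled by Host--Kra seminorms of $f_i$ of bounded degree; the PET bookkeeping together with this passage from product-system seminorms back to seminorms of $f_i$ determines $\ell$ in terms of $d$ and the degrees of the $p_{r,i}$.

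\emph{Stage 2: the structured part.} Write $f_i=\E[f_i\mid\mathcal Z_\ell]+f_i'$ for $1\le i\le k$. By multilinearity $a$ expands into $2^k$ multicorrelation sequences; Stage 1 shows each of the $2^k-1$ terms containing some $f_i'$ is a nullsequence, and since a finite sum of nullsequences is a nullsequence these assemble into $a_{\mathrm{er}}$, leaving the single term with every $f_i$ replaced by $\E[f_i\mid\mathcal Z_\ell]$ as $a_{\mathrm{st}}$. Under the total-ergodicity hypothesis the factor $\mathcal Z_\ell$ is an inverse limit of nilsystems --- this is the structural input that holds only up to $\varepsilon$ for arbitrary commuting transformations and is what makes Theorem \ref{intro1}, and hence this theorem, go through --- so, approximating each $\E[f_i\mid\mathcal Z_\ell]$ in $L^2$ by a continuous function on a nilsystem factor $G/\Gamma$ and using that the uniform norm is preserved by the multicorrelation integral, one reduces to computing $\int_{G/\Gamma}F_0\cdot\prod_{r=1}^k\bigl(\prod_{i=1}^d b_i^{p_{r,i}(n)}\bigr)F_r\,d\mathrm{vol}$ for commuting niltranslations $b_1,\dots,b_d$. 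Since $n\mapsto\bigl(\prod_i b_i^{p_{r,i}(n)}\bigr)_{r=1}^k$ is a polynomial sequence in the nilpotent group $G^k$, Leibman's equidistribution theory for polynomial orbits on nilmanifolds --- in particular the fact that a polynomial nilsequence is an ordinary nilsequence of controlled step (cf. \cite{leibman1}) --- identifies this integral as a nilsequence, and the uniform approximation exhibits $a_{\mathrm{st}}$ as a uniform limit of $\ell$-step nilsequences, after enlarging $\ell$ if necessary to absorb the step increase in that reduction.

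\emph{Main obstacle.} The substantive work is Stage 1: showing the PET induction closes up for an arbitrary admissible family $\{p_r\}$, i.e.\ enumerating the finitely many collision patterns of the polynomials that can arise under successive van der Corput shifts and checking that each is either eliminated by the non-constancy of $p_r$ and $p_r-p_{r'}$ or reduced to a configuration of strictly smaller PET-complexity that is seminorm-controlled via the ergodicity of the relevant $T_1^{a_1}\cdots T_d^{a_d}$, and then tracking this through to a clean value of $\ell$. The more conceptual point, shared with Theorem \ref{intro1}, is the structural fact that under the total-ergodicity hypothesis the relevant characteristic factor for these averages of commuting transformations is a genuine pro-nilsystem; this is what upgrades the $\varepsilon$ in \cite{hostfra}, Theorem 2.5, to $0$, and in practice one expects to deduce Stage 2 by invoking the machinery already developed for Theorem \ref{intro1} rather than redeveloping it.
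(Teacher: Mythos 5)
Your proposal follows the paper's proof essentially step for step: bounding the averaged square of $a(n)$ via the product system $(X\times X, T_i\times T_i)$ by directional seminorms of the $f_i$, passing to the factor $Z_\ell(X)$ and its nilsystem approximations (Frantzikinakis--Kra), applying Leibman's Theorem 2.5 to the polynomial sequence $(\vec{a}_{p_1(n)},\dots,\vec{a}_{p_k(n)})$, and then running the $\varepsilon$-removal (Cauchy-in-$\ell^\infty$) argument already used for Theorem \ref{intro1}. The one difference is that the PET induction you single out as the main obstacle is not carried out in the paper either: the required seminorm control is imported wholesale as Johnson's theorem (Theorem \ref{mcrj}), applied to $f_i\otimes\bar{f_i}$ on the product system and combined with Lemma \ref{lem1} (and the ergodicity hypotheses) to pass back to seminorms of $f_i$, which is exactly the content of your Stage 1.
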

We remark that the number $\ell$ depends on the number of applications of the van der Corput trick, essential in the PET induction introduced in \cite{wmpet} (see also \cite{bl1}, where it is used for commuting transformations). It is worth point out that $\ell$ admits an upper bound that only depends on $d, k$ and the highest degree of the polynomials in $\{p_1,\dots,p_k\}$
\\ \\
The ergodicity assumptions enable us to use the results on characteristic factors for commuting transformations that was established in the work of Frantzikinakis and Kra \cite{frakra}. Their work provides an essential tool in establishing the desired decomposition of the multicorrelation sequence $a(n)$. The main strategy of the proof builds, then, on previous work of Bergelson, Host and Kra in \cite{bhk} and Leibman in \cite{leibman1}, adapted for commuting transformations. 
\\ \\
Another purpose of this article is to prove two more results that showcase how natural extra ergodicity assumptions on a measure preserving system make ``expected'' results on ``large intersections'' hold. The first new result we establish on large intersections is the Theorem:
\begin{theorem}\label{intro2}
	Let $(X,\mathcal{B},\mu)$ be a measure space and let $T_1,T_2: X \rightarrow X$ be invertible measure preserving transformations such that $T_1, T_2, T_1T_2^{-1}$ are ergodic. Then, for all $\varepsilon>0$ and all $A \in \mathcal{B}$, the set
	\begin{equation}\label{length1}
		\{ n \in \Z : \mu(A \cap T_1^{-n}A \cap T_2^{-n}A)>\mu(A)^3-\varepsilon \}
	\end{equation}
	is syndetic (i.e., has bounded gaps in $\Z$).
\end{theorem}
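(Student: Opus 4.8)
The plan is to deduce the statement from a lower bound on the average of the multicorrelation sequence, exploiting the special structure of its structured part. Put $a(n):=\mu(A\cap T_1^{-n}A\cap T_2^{-n}A)=\int_X \ch_A\cdot T_1^n\ch_A\cdot T_2^n\ch_A\,d\mu$, a sequence with values in $[0,1]$. Since $T_1$, $T_2$ and $T_1T_2^{-1}$ are ergodic, Theorem \ref{intro1} applies with $d=2$ and $f_0=f_1=f_2=\ch_A$, producing a decomposition $a=a_{\textrm{st}}+a_{\textrm{er}}$ in which $a_{\textrm{st}}$ is a uniform limit of $2$-step nilsequences and $a_{\textrm{er}}$ is a nullsequence.

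Next I would record how the two summands behave on average. For a fixed nilsequence $\psi(n)=F(b^n z)$ on a nilmanifold, the orbit closure of $z$ under $b$ is a finite disjoint union of sub-nilmanifolds, cyclically permuted by $b$, on each of which a suitable power of $b$ acts uniquely ergodically; consequently $\frac{1}{N-M}\sum_{n=M}^{N-1}\psi(n)$ converges, as $N-M\to\infty$, to a limit that does not depend on the position of the window $[M,N)$. Passing to uniform limits, the same is true for $a_{\textrm{st}}$: there is a constant $L$ with $\frac{1}{N-M}\sum_{n=M}^{N-1}a_{\textrm{st}}(n)\to L$ uniformly in $M$ as $N-M\to\infty$. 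On the other hand, by the Cauchy--Schwarz inequality together with Definition \ref{nullseq1}, $\big|\frac{1}{N-M}\sum_{n=M}^{N-1}a_{\textrm{er}}(n)\big|\le\big(\frac{1}{N-M}\sum_{n=M}^{N-1}|a_{\textrm{er}}(n)|^2\big)^{1/2}\to 0$. Adding the two contributions, $\frac{1}{N-M}\sum_{n=M}^{N-1}a(n)\to L$, uniformly in the position of the window.

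The crux is the estimate $L\ge\mu(A)^3$, which I would obtain by computing the limiting average through characteristic factors. By the results of Frantzikinakis and Kra \cite{frakra}, under the present ergodicity hypotheses the averages $\frac1N\sum_{n=0}^{N-1}T_1^nf_1\cdot T_2^nf_2$ are controlled in $L^2$ by a factor $Z$ that is an inverse limit of $2$-step nilsystems. Replacing each of $f_0,f_1,f_2=\ch_A$ by $g:=\E(\ch_A\mid Z)\ge 0$ (with $\int g\,d\mu=\mu(A)$) and approximating $Z$ by a single $2$-step nilmanifold $G/\Gamma$ with commuting niltranslations $b_1,b_2$ whose relevant sub-actions remain minimal, one reduces to showing that $\lim_N\frac1N\sum_{n=0}^{N-1}\int g\cdot b_1^ng\cdot b_2^ng\,dm$ is at least $(\int g\,dm)^3$. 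This limit is the integral of $g\otimes g\otimes g$ against a measure $\nu$ on $Z^3$ supported on the closure of the orbit of the diagonal under $(z_0,z_1,z_2)\mapsto(z_0,b_1z_1,b_2z_2)$, and the required bound $\int g\otimes g\otimes g\,d\nu\ge(\int g\,dm)^3$ is exactly the estimate proved by Bergelson, Host and Kra \cite{bhk} in the single-transformation case $T_i=T^i$; the same argument adapts. The ergodicity hypotheses — that of $T_1T_2^{-1}$ in particular — are what keep the factor $Z$ small enough, an inverse limit of $2$-step nilsystems, to force the exponent $3$ (for general commuting ergodic transformations one only has a weaker bound, with a larger exponent). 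I expect verifying $L\ge\mu(A)^3$ in this generality to be the main obstacle.

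Finally, granting $L\ge\mu(A)^3$, the conclusion follows by a window-averaging argument. Fix $\varepsilon>0$ and suppose, for contradiction, that the set $E_\varepsilon:=\{n\in\Z:\mu(A\cap T_1^{-n}A\cap T_2^{-n}A)>\mu(A)^3-\varepsilon\}$ is not syndetic. Then there are windows $[M_k,N_k)$ with $N_k-M_k\to\infty$ and $E_\varepsilon\cap[M_k,N_k)=\emptyset$; on each such window $a(n)\le\mu(A)^3-\varepsilon$, so $\frac{1}{N_k-M_k}\sum_{n=M_k}^{N_k-1}a(n)\le\mu(A)^3-\varepsilon$ for every $k$. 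Letting $k\to\infty$ and using that the window averages converge to $L$ uniformly in position, we obtain $L\le\mu(A)^3-\varepsilon<\mu(A)^3\le L$, a contradiction. Hence $E_\varepsilon$ is syndetic for every $\varepsilon>0$, which is the assertion of the theorem.
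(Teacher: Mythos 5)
Your reduction to the inequality $L\ge\mu(A)^3$ for the limit $L$ of the unweighted window averages of $a(n)$ is the step that fails, and it is not merely ``the main obstacle'' you flag: the inequality is false in general, even under the ergodicity hypotheses of the theorem. By the limit formula (which the paper proves as Theorem \ref{twotranslimit}), $L=\int_Z\int_{Y_{T,S}}\tilde f(z)\tilde f(z+u)\tilde f(z+v)\,d\nu_{Y_{T,S}}(u,v)\,d\nu_Z(z)$, where $Y_{T,S}$ is the orbit closure of $(\alpha,\beta)$ in $Z^2$; this need not dominate $\mu(A)^3$ because $Y_{T,S}$ is typically a proper subgroup of $Z^2$. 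Concretely, take $X=Z=\T$, $T_1=R_\alpha$, $T_2=R_{2\alpha}$ with $\alpha$ irrational (so $T_1$, $T_2$, $T_1T_2^{-1}$ are all ergodic); then $L=\int_\T\int_\T \ch_A(z)\ch_A(z+t)\ch_A(z+2t)\,dt\,dz$, whose Fourier expansion is $\mu(A)^3+\sum_{k\neq0}\widehat{\ch_A}(k)^2\widehat{\ch_A}(-2k)$, and the correction term can be negative, so the full average can dip below $\mu(A)^3$. Your appeal to Bergelson--Host--Kra is also misplaced: they do not prove that the global average of the triple correlation is at least $\mu(A)^3$; their lower bounds (and everyone else's since) are obtained only after restricting $n$ to times when the relevant rotation returns near the identity, precisely because the global average admits no such bound.

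This is exactly where the paper's argument differs from yours. After establishing the limit formula, the paper proves a weighted version (Lemma \ref{contfunc}) with a continuous weight $\eta$ on $Y_{T,S}$, and in the proof of Theorem \ref{largelinear} it chooses, via Jensen's inequality ($\int_Z\tilde f^3\,d\nu_Z\ge\mu(A)^3$), continuity, and Urysohn's lemma, a nonnegative bump $\eta$ of integral $1$ concentrated near the identity of $Y_{T,S}$ so that the $\eta$-weighted average of $a(n)$ along any F\o lner sequence is at least $\mu(A)^3-\varepsilon/2$; comparing with a F\o lner sequence avoiding the return set yields the contradiction. The syndeticity ultimately comes from the Bohr-type set of $n$ with $(n\alpha,n\beta)$ close to $(0,0)$, not from the size of the unweighted average. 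To repair your proposal you would need to replace the constant $L$ by such weighted averages (Frantzikinakis's trick), at which point you are essentially reconstructing the paper's proof; your window-averaging contradiction at the end is fine once that substitution is made, and your preliminary decomposition via Theorem \ref{intro1} is not needed for this theorem (the paper's proof of Theorem \ref{largelinear} does not use it).
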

Theorem \ref{intro2} should be compared and contrasted with the results of Chu in \cite{chu}: Chu showed that for an arbitrary ergodic measure preserving system $(X,\mathcal{B},\mu,T_1,T_2)$ one has that $\{ n \in \Z: \mu(A \cap T_1^{-n}A \cap T_2^{-n}A)>\mu(A)^4-\varepsilon\}$ is syndetic, but that one cannot improve the exponent $4$ to $3$ in the general setting. In fact, more is true: the exponent $4$ cannot be improved to any exponent strictly smaller than $4$ (see Theorem 1.2 in \cite{sundonoso}). Theorem \ref{intro2} gives some natural directional ergodicity conditions that allow to replace $\mu(A)^4$ with $\mu(A)^3$.
\\ \\
For arbitrarily large $d$, one certainly cannot hope for the set $\{ n \in \Z: \mu(A \cap T_1^{-n}A \cap \dots \cap T_d^{-n}A)>\mu(A)^d-\varepsilon\}$ to be syndetic even with the mild ergodicity conditions that make Theorem \ref{intro1} work. Indeed, examples in the appendix with I. Rusza in \cite{bhk} show that we can at most hope for such a result to hold if $d \leq 3$. Theorem 1.3 in \cite{sundonoso} shows that in fact, even $d=3$ is too much to hope for, as they provide an example of a measure preserving system $(X,\mathcal{B},\mu,T_1,T_2,T_3)$ with each $T_i$ ergodic and a set $A \in \mathcal{B}$ with $\mu(A)>0$ such that for every $\ell>0$,
\begin{equation}
\mu(A \cap T_1^{-n}A \cap T_2^{-n}A \cap T_3^{-n}A)<\mu(A)^{\ell}, \textrm{ for all } n \neq 0,
\end{equation}
so there is no more room for improvement in that direction. 
\\ \\
To prove Theorem \ref{intro2} we use the results on seminorms of Host \cite{host1} and the structural work in Frantzikinakis and Kra \cite{frakra} to establish a limiting formula (see Theorem \ref{twotranslimit} below), as well as a version of a trick of Frantzikinakis (\cite{fra2}, Proposition 5.1) which was employed in Section 8 of \cite{btz}.
\\ \\
Lastly, we also prove a variant of Theorem \ref{intro2} for shifted primes (i.e. $\mathbb{P}\pm 1$), using techniques from \cite{le}, \cite{fhk} and \cite{dmls} combined with our previous decomposition result, namely Theorem \ref{intro1}. In the statements and proofs, we will stick to $\mathbb{P}-1$ for convenience, but one easily checks that the same results hold for $\mathbb{P}+1$.
\begin{theorem}\label{intro3}
	Let $(X,\mathcal{B},\mu,T,S)$ be a measure preserving system with $T, S, TS^{-1}$ totally ergodic. Then, for all $\varepsilon>0$ and all $A \in \mathcal{B}$, the set 
	\[ \{n \in \Z : \mu(A \cap T^{-(p_n-1)}A \cap S^{-(p_n-1)} A)>\mu(A)^3-\varepsilon\} \]
	has positive lower density. 
\end{theorem}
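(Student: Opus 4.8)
We may assume $\mu(A)>0$ and $\varepsilon<\mu(A)^3$. Put $f:=\mathbf 1_A$ and $a(n):=\int_X f\cdot T^nf\cdot S^nf\,d\mu=\mu(A\cap T^{-n}A\cap S^{-n}A)$, so that $0\le a(n)\le\mu(A)$. Since every $M\in\N$ is of the form $\pi(N)$, an elementary averaging argument --- comparing $\frac{1}{\pi(N)}\sum_{p\le N}a(p-1)$ with $(\mu(A)^3-\varepsilon)+\mu(A)\cdot\frac{\#\{p\le N:\,a(p-1)>\mu(A)^3-\varepsilon\}}{\pi(N)}$ and passing to $\liminf$ --- reduces the theorem (with lower density $\ge\varepsilon/\mu(A)$) to the inequality
\[ \liminf_{N\to\infty}\ \frac{1}{\pi(N)}\sum_{p\le N,\ p\text{ prime}}a(p-1)\ \ge\ \mu(A)^3. \]
As $T,S,TS^{-1}$ are totally ergodic, hence ergodic, Theorem~\ref{intro1} (with $d=2$) gives $a=a_{\textrm{st}}+a_{\textrm{er}}$, where $a_{\textrm{st}}$ is a uniform limit of $2$-step nilsequences and $a_{\textrm{er}}$ is a nullsequence. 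Thus the displayed inequality follows from (i) $\lim_{N}\frac1{\pi(N)}\sum_{p\le N}a_{\textrm{er}}(p-1)=0$ and (ii) $\liminf_{N}\frac1{\pi(N)}\sum_{p\le N}a_{\textrm{st}}(p-1)\ge\mu(A)^3$.

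For (ii) I would run the $W$-trick: fix $w\in\N$, put $W=\prod_{q\le w}q$, and split the primes $p>w$ by their residue $b\in(\Z/W\Z)^\times$. By the equidistribution of uniform limits of nilsequences along shifted primes in arithmetic progressions (a consequence of the Green--Tao theory, in the form of \cite{le}; see also \cite{fhk}), for each $b$ the average over $p\equiv b\pmod W$ of $a_{\textrm{st}}(p-1)$ converges to the Ces\`aro average $\lim_{M}\frac1M\sum_{m\le M}a_{\textrm{st}}(Wm+b-1)$. Since the restriction of a nullsequence to an arithmetic progression is again a nullsequence, this equals $\lim_{M}\frac1M\sum_{m\le M}a(Wm+b-1)=\lim_{M}\frac1M\sum_{m\le M}\int_X f\cdot(T^{W})^m(T^{b-1}f)\cdot(S^{W})^m(S^{b-1}f)\,d\mu$. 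Because $T^{W},S^{W}$ and $T^{W}S^{-W}=(TS^{-1})^{W}$ are ergodic --- this, rather than Theorem~\ref{intro1}'s mere ergodicity, is what forces the \emph{total} ergodicity hypothesis --- the structural results of \cite{frakra} and the limiting formula Theorem~\ref{twotranslimit} apply to $(X,\mathcal B,\mu,T^{W},S^{W})$. Feeding these progression averages back, summed over $b\in(\Z/W\Z)^\times$ against weights tending to $1/\varphi(W)$ and then letting $w\to\infty$, and running the positivity argument behind Theorem~\ref{intro2} on the resulting self-joining of $\mu$, gives (ii); note that the individual progression averages need not reach $\mu(A)^3$, so the averaging over $b$ does genuine work.

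The crux is (i). A nullsequence can be concentrated on the sparse set of shifted primes, so Theorem~\ref{intro1} by itself says nothing about $a_{\textrm{er}}$ there; one must combine the finer structure of $a_{\textrm{er}}$ with Green and Tao's decomposition of the von Mangoldt function, as developed for correlation sequences in \cite{fhk} (and \cite{dmls}). After the $W$-trick one is reduced to estimating $\frac1N\sum_{n\le N}\big(\widetilde\Lambda_{W,b}(n)-1\big)\,a_{\textrm{er}}(n-1)$, where $\widetilde\Lambda_{W,b}$ is the normalized von Mangoldt weight on the residue class of $b$ modulo $W$; splitting $\widetilde\Lambda_{W,b}$ into a bounded-complexity (essentially nilsequence) part and a Gowers-uniform part, the former is handled because $a_{\textrm{er}}$, being a nullsequence, is orthogonal in Ces\`aro mean to uniform limits of nilsequences, and the latter because $a_{\textrm{er}}=a-a_{\textrm{st}}$ inherits --- from the van der Corput/PET analysis that produces the decomposition of Theorem~\ref{intro1} --- the vanishing of the Gowers--Host--Kra-type seminorm that controls such sequences. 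Letting $w\to\infty$ then gives (i), and (i) together with (ii) yields the displayed inequality and hence the theorem.

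I expect the main obstacle to be (i) --- specifically, matching the uniformity seminorm that controls $a_{\textrm{er}}$ with the complexity of the Gowers-uniform component of $\widetilde\Lambda_{W,b}$, and making the two limiting procedures ($N\to\infty$ followed by $w\to\infty$) interchange cleanly; this is precisely where the combined input of \cite{le}, \cite{fhk} and \cite{dmls} is needed. A secondary point to verify is the positivity input in (ii): that the weighted self-joining produced by the $W$-trick still assigns mass at least $\mu(A)^3$ to $A\times A\times A$, which is the place where total ergodicity of $T,S,TS^{-1}$ is used in an essential way.
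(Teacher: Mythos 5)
Your skeleton fails at the very first step: the reduction of the theorem to the inequality $\liminf_{N\to\infty}\frac{1}{\pi(N)}\sum_{p\le N}a(p-1)\ge\mu(A)^3$ is a reduction to a false statement, and consequently your target (ii) is also false. Take $X=\T$ with Haar measure, $T=R_\alpha$, $S=R_{2\alpha}$ with $\alpha$ irrational (so $T,S,TS^{-1}$ are totally ergodic, the system equals its Kronecker factor, $a=a_{\mathrm{st}}$ and $a_{\mathrm{er}}=0$), and let $A\subseteq\T$ be a Behrend-type set of small measure $\delta$. Then $a(n)=\int_\T \mathbf{1}_A(z)\mathbf{1}_A(z+n\alpha)\mathbf{1}_A(z+2n\alpha)\,dz$, and since $(p-1)\alpha$ equidistributes mod $1$ (Vinogradov), the prime-shift average of $a$ equals $\int_\T\int_\T \mathbf{1}_A(z)\mathbf{1}_A(z+t)\mathbf{1}_A(z+2t)\,dz\,dt$, which for Behrend sets is far smaller than $\delta^3$. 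This is precisely why Khintchine-type results are stated as ``the set of times $n$ with $\mu(A\cap T^{-n}A\cap S^{-n}A)>\mu(A)^3-\varepsilon$ is large'' rather than as a lower bound on the average: largeness of the average is strictly stronger than the theorem and simply does not hold. Your part (i) is sound in spirit --- it is the paper's Theorem \ref{legeneral}, proved via Proposition \ref{removinglambda} (the comparison with the $W$-tricked von Mangoldt weight from \cite{fhk}) and Lemma \ref{commutingle} --- but with (ii) unprovable the argument collapses, and no amount of care with the $W$-trick or the order of limits can repair it.

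The paper's proof avoids averaging over all shifted primes. After writing $\phi(n)=\mu(A\cap T^{-n}A\cap S^{-n}A)=a_{\mathrm{st}}(n)+a_{\mathrm{er}}(n)$ (Theorem \ref{mainthm}) and approximating $a_{\mathrm{st}}$ uniformly by a $2$-step nilsequence $F(\tau^n\Gamma)$ on a nilmanifold with $d$ connected components, it uses total ergodicity to get syndeticity of $\{n:\phi(dn)>\mu(A)^3-\varepsilon\}$ (Theorem \ref{largelinear} applied along multiples of $d$); since $a_{\mathrm{er}}$ is a nullsequence, $F$ must exceed $\mu(A)^3-\varepsilon/2$ at some point of the identity component $Y_0$, hence on an open set $U\subseteq Y_0$. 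Corollary 1.4 of \cite{le} (equidistribution of $\tau^{p_n-1}\Gamma$ in $Y_0$ along $p_n\equiv 1\pmod d$) then produces a positive lower density set of $n$ with $F(\tau^{p_n-1}\Gamma)$ large, and Theorem \ref{legeneral} lets one discard the zero-density set of $n$ where $|a_{\mathrm{er}}(p_n-1)|$ is not small. If you want to salvage your write-up, replace the average-based reduction by this ``large value on an open subset of $Y_0$ plus equidistribution along shifted primes'' mechanism; the pointwise largeness of the structured part on a syndetic set, not an average lower bound, is what total ergodicity actually buys.
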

The structure of the paper is as follows. In Section 2 we introduce the necessary notation and terminology that will be used throughout the paper, and recall a few basic results from Ergodic Theory. In Section 3 we obtain some auxiliary results that naturally extend the previously known facts pertaining to a single transformation.
\\ \\
In Section 4 we give the proof of Theorem \ref{intro1}, as well as its extension Theorem \ref{intro2bis}. Finally, in Section 5 we prove Theorem \ref{intro2} and Theorem \ref{intro3}.
\section{Preliminaries and Notation}
\subsection{Basic facts from Ergodic Theory}
\emph{Systems.} A measure preserving system is a tuple $(X,\mathcal{B},\mu,T_1,\dots,T_d)$, where $(X,\mathcal{B},\mu)$ is a standard Lebesgue probability space (see Definition 2.3 in \cite{walters}), and $T_1,\dots,T_d$ are commuting invertible measure preserving selfmaps of $X$.
\\ \\
\emph{Factors.} A \emph{homomorphism} from $(X,\mathcal{B},\mu,T_1,\dots,T_d)$ to $(Y,\mathcal{C},\nu,S_1,\dots,S_d)$ is a measurable map $\pi: X' \rightarrow Y'$ with $X'$ a $T_i$-invariant set of full measure and $Y'$ an $S$-invariant set of full measure (for all $i$), that intertwines $T_i$ and $S_i$ for all $i$. When such a map $\pi$ exists, we say that $(Y,\mathcal{C},\nu,S_1,\dots,S_d)$ is a factor of $(X,\mathcal{B},\mu,T_1,\dots,T_d)$. A factor can be identified with the $T_i$-invariant (for all $i$) sub-$\sigma$-algebra $\pi^{-1}(\mathcal{Y})$, and one can show that any $T_i$-invariant sub-$\sigma$-algebra of $\mathcal{B}$ defines a factor. Alternatively, a factor can also be thought of as a $T_i$-invariant subalgebra $\mathcal{F}$ of $L^{\infty}(X,\mathcal{B},\mu)$. In the sequel we will make extensive use of the \emph{Kronecker factor} of a system which is the smallest factor that makes all the $L^2(\mu)$-eigenfunctions measurable.
\\ \\
\emph{Conditional expectation.} If $\mathcal{Y}$ is a $T$-invariant sub-$\sigma$-algebra of $\mathcal{B}$ and $f \in L^1(\mu)$, we write $\mathbb{E}[f|\mathcal{Y}]$ for the conditional expectation of $f$ with respect to $\mathcal{Y}$. We will make use of the identities
\[ \int_X \mathbb{E}[f|\mathcal{Y}] \ d\mu=\int_X f \ d\mu \textrm{ and } T\mathbb{E}[f|\mathcal{Y}]=\mathbb{E}[Tf|\mathcal{Y}] \textrm{ (this equality holds for a.e. } x \in X).\]
\\ \\
\emph{Characteristic factors} Let $(F_N)$ be a F\o lner sequence in $\Z$ (i.e. a sequence of finite sets of $\Z$ with $\frac{|(F_N+k)\Delta F_N|}{|F_N|} \to 0$ as $N \to \infty$, for all $k \in \Z$) and let $p_1,\dots,p_k: \Z  \rightarrow \Z^d$ be a family of essentially distinct polynomials (i.e., such that $p_i-p_j$ is not constant for any $i \neq j$). We say that a sub-$\sigma$-algebra $\mathcal{D}$ of $\mathcal{B}$ is a characteristic factor of $(X,\mathcal{B},\mu,T_1,\dots,T_d)$ if for all $f_1,\dots,f_k \in L^{\infty}(\mu)$ we have
\[ \lim_{N \to \infty} \left\{\frac{1}{|F_N|}\sum_{n \in F_N}T_{p_1(n)}f_1\cdot \dotso \cdot T_{p_k(n)}f_k-\frac{1}{|F_N|}\sum_{n \in F_N}T_{p_1(n)}\mathbb{E}[f_1|\mathcal{D}]\cdot \dotso \cdot T_{p_k(n)}\mathbb{E}[f_k|\mathcal{D}]\right\}=0, \]
where the equality takes place in $L^2(\mu)$. We recall here that throughout the paper, if $p_r(n)=(p_{r,1}(n),\dots,p_{r,d}(n))$, we write $T_{p_r(n)}f_r$ to mean $\prod_{i=1}^d T_i^{p_{r,i}(n)}f_r$
\\ \\
\emph{Inverse limits.} We say that $(X,\mathcal{B},\mu,T_1,\dots,T_d)$ is an \emph{inverse limit sequence of factors} $(X,\mathcal{B}_j,\mu,T_1,\dots,T_d)$ if $(\mathcal{B}_j)_{j \in \N}$ is an increasing sequence of $T_i$-invariant (for all $i$)sub-$\sigma$-algebras such that $\bigvee_{j \in \N} \mathcal{B}_j=\mathcal{B}$ (here $\bigvee_{j \in \N} \mathcal{B}_j$ is the $\sigma$-algebra generated by the union of the $\mathcal{B}_j$) up to sets of measure zero.  
\\ \\

\emph{Ergodicity and the ergodic decomposition.} 
Given a measure preserving transformation $T: X\rightarrow X$, we denote by $\mathcal{I}(T)=\{A \in \mathcal{B} : \mu(A \Delta T^{-1}A)=0\}$ the algebra of invariant sets. We say that $T$ is ergodic if any set in $\mathcal{I}(T)$ has measure $0$ or $1$. The pointwise ergodic theorem states that if $T$ is ergodic, then for $\mu$-a.e. $x \in X$ we have
\[ \lim_{N\to \infty} \frac{1}{N} \sum_{n=1}^N f(T^nx)=\int_X f \ d\mu. \]
There exists a map $x \mapsto \mu_x$ that is $\mathcal{I}(T)$-measurable and such that for every $f \in L^{\infty}(\mu)$ we have
\[ \mathbb{E}[f|\mathcal{I}(T)](x)=\int_X f \ d\mu_x, \textrm{ for } \mu \textrm{ almost every } x \in X.\]
The \emph{ergodic decomposition} for the measure $\mu$ is given as an integral of $\mu_x$ with respect to a probability measure $\lambda$ sitting on the space of extreme points of $T$-invariant measures, denoted by $\Omega$
\[ \mu:=\int_{\Omega} \mu_{\omega} \ d\lambda(\omega).\]
Moreover, the measures $\mu_x$ are such that for $\mu$-a.e. $x \in X$, the system $(X,\mathcal{B},\mu_x,T)$ is ergodic.
\subsection{The Gowers-Host-Kra seminorms $||| \cdot |||_k$ and the factors $\mathcal{Z}_k$.}
\emph{The Gowers-Host-Kra seminorms $|||\cdot|||_k$.} Let $(X,\mathcal{B},\mu,T)$ be a measure preserving system. We use $\mu \times_{\mathcal{I}(T)} \mu$ to denote the relatively independent joining of $(X,\mathcal{B},\mu)$ with itself with respect to the $\sigma$-algebra $\mathcal{I}(T)$ 
(i.e. $\nu \times_{\mathcal{I}(T)} \nu$ is a measure on $X \times X$ satisfying $\int_{X \times X} f_1 \otimes f_2  \ d\mu \times_{\mathcal{I}(T)} \mu=\int_X \mathbb{E}[f_1|\mathcal{I}(T)]\mathbb{E}[f_2|\mathcal{I}(T)] \ d\mu$ for all $f_1,f_2 \in L^{\infty}(\nu)$)
 Write $\mu=\int \mu_x \ d\lambda$ for the ergodic decomposition of $\mu$. For every $k \geq 1$ we define a measure $\mu^{[k]}$ on $X^{2^k}$ invariant under $T^{[k]}:=T \times \dots \times T$ ($2^k$ times) by 
\[ \mu^{[1]}=\int \mu_x \times \mu_x \ d\mu, \textrm{ and inductively}\]
\[ \mu^{[k+1]}=\mu^{[k]} \times_{\mathcal{I}(T^{[k]})} \mu^{[k]} \textrm{ for } k \geq 1).	\]
We can now define a seminorm (see Lemma 3.9 of \cite{host1}) $|||\cdot|||_k$ on $L^{\infty}(\mu)$ by
\[|||f|||_k:= \left(\int_{X^{2^k}} \prod_{i=0}^{2^k-1}\mathcal{C}^{\sum_{j}a_j(i)}(f(x_i))\ d\mu^{[k]}\right)^{1/2^k},\]
where $i=\sum_{j}a_j(i)2^j$ is the binary expansion of $i \in [0,2^{k}-1]$. Throughout we use the convention 
\begin{equation}
\mathcal{C}^kz:=\begin{cases}
z, \textrm{ if } k \textrm{ even, } \\
\bar{z}, \textrm{ otherwise.}
\end{cases}
\end{equation}
\emph{The factors $\mathcal{Z}_k$.} For every $k \geq 1$, the factors $\mathcal{Z}_k$ denote the invariant $\sigma$-algebras on $X$ satisfying the property
\[\textrm{ for } f \in L^{\infty}(\mu), \  \mathbb{E}[f|\mathcal{Z}_{k-1}]=0 \textrm{ if and only if }|||f|||_k=0.\]
See \cite{hostkrabook} for more details.
\subsection{Structure theory and nilsystems} A \emph{nilmanifold} is a homogeneous space $X=G/\Gamma$ where $G$ is a nilpotent Lie group, and $\Gamma$ is a discrete cocompact subgroup of $G$. Let $G_k$ be the $k$-th commutator subgroup of $G$. If $G_{k+1}=\{e\}$, we say that $G$ is a $k$-step nilpotent Lie group. We say that $X=G/\Gamma$ is a $k$-\emph{step nilmanifold} if $G$ is a $k$-step nilpotent Lie group.

A $k$-step nilpotent Lie group $G$ acts on $G/\Gamma$ by left translations, where the translation by a fixed element $a \in G$ is given by $T_a(g\Gamma)=(ag)\Gamma$. By $m_X$ we denote the unique probability measure on $X$ that is invariant under the action of $G$ by left translations, and by $\mathcal{G}/\Gamma$ we denote the Borel $\sigma$-algebra of $G/\Gamma$. Fixing an element $a \in G$, we call the measure preserving system $(G/\Gamma,\mathcal{G}/\Gamma,m_X,T_a)$ a $k$-\emph{step nilsystem}.

If $X=G/\Gamma$ is a $k$-step nilmanifold, $a \in G$, $x \in X$, and $f \in C(X)$, we call the sequence $(f(a^nx))_{n \in \N}$ a $k$-\emph{step nilsequence}. We caution the reader that it is possible to find slightly different usage of the terminology we just presented in the literature for nilsequences. Regarding the previously introduced nullsequences, we will use the following equivalent formulations: 
First, since $a$ is bounded, it follows that the square in \eqref{besicovitch} is immaterial, so $a$ is a nullsequence if and only if
\[ \lim_{N-M \to \infty} \frac{1}{N-M} \sum_{n=M}^{N-1} |a(n)|=0.\]
Second, an equivalent condition to \eqref{besicovitch} is 
\[ \lim_{N \to \infty} \frac{1}{|F_N|}\sum_{n \in F_N} |a(n)|=0\]
for all F\o lner sequences $(F_N)$ in $\Z$.

We will need later the following fundamental result of Host and Kra:
\begin{theorem}[\cite{hostkraoriginal}] Let $(X,\mathcal{B},\mu,T)$ be an ergodic measure preserving system and $k \in \N$. Then the measure preserving system $(X,\mathcal{Z}_k,\mu,T)$ is a (measure theoretic) inverse limit of $k$-step nilsystems.
\end{theorem}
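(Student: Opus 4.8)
This is a fundamental theorem of Host and Kra whose proof is long and intricate; what follows is a high-level sketch of the strategy one would follow.

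The plan is to first develop the basic theory of the cubic measures $\mu^{[k]}$ and the seminorms $|||\cdot|||_k$ defined above. The preliminary facts needed are: (i) a Cauchy--Schwarz--Gowers inequality for $\mu^{[k]}$, which shows that $|||\cdot|||_k$ is a genuine seminorm on $L^{\infty}(\mu)$ and that the seminorms are nondecreasing in $k$; and (ii) a repeated van der Corput argument showing that $|||\cdot|||_{k+1}$ dominates the relevant $2^k$-fold cubic averages, so that $\{f : |||f|||_k = 0\}$ is exactly the set of $f$ with $\mathbb{E}[f \mid \mathcal{Z}_{k-1}] = 0$ for a $T$-invariant sub-$\sigma$-algebra $\mathcal{Z}_{k-1}$, and that $(X,\mathcal{Z}_k,\mu,T)$ is characteristic for the averages governed by $|||\cdot|||_{k+1}$. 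With this in hand the theorem is proved by induction on $k$: one shows $(X,\mathcal{Z}_k,\mu,T)$ is an inverse limit of $k$-step nilsystems.

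For the base case $k=1$ one checks that $\mathcal{Z}_1$ is the Kronecker factor (the sub-$\sigma$-algebra generated by the eigenfunctions), which by the spectral theorem is an inverse limit of rotations on compact abelian Lie groups, i.e. of $1$-step nilsystems. For the inductive step, assume $\mathcal{Z}_{k-1}$ is an inverse limit of $(k-1)$-step nilsystems; by a routine inverse-limit reduction one may assume $Y := \mathcal{Z}_{k-1} = G'/\Gamma'$ is an honest $(k-1)$-step nilsystem. The crucial structural input is then that the extension $\mathcal{Z}_k \to \mathcal{Z}_{k-1}$ is a \emph{compact abelian group extension}: this is extracted from an analysis of the relevant cube system $(X^{2^k},\mu^{[k]})$ and its group of ``side'' symmetries, where the combinatorics of the parallelepiped structure forces the extension to be isometric with abelian structure group, so that $\mathcal{Z}_k \cong Y \times_\rho U$ for a compact abelian group $U$ and a cocycle $\rho : Y \to U$, the translation becoming $(y,u) \mapsto (by, u\cdot\rho(y))$. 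Moreover $\rho$ is not arbitrary: invariance of a suitable function on $(X^{2^k},\mu^{[k]})$ under $T^{[k]}$ yields the order-$k$ generalization of the Conze--Lesigne equation, namely that for each $t \in G'$ the cocycle $y \mapsto \rho(ty)/\rho(y)$ is cohomologous to a constant through a transfer function that itself satisfies the analogous equation one level down.

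It then remains to convert this cocycle equation into a nilpotent group structure. Using the generalized Conze--Lesigne equation together with the nilmanifold structure of the base $Y$, one builds a group $G$ of measure preserving transformations of $\mathcal{Z}_k$ containing $T$ --- generated by lifts of the elements of $G'$ (defined via the transfer functions) and by the central subgroup $U$ --- and checks that the defining equation forces all new commutators into $U$, so that $G$ is $k$-step nilpotent and $\mathcal{Z}_k$ is a homogeneous space of $G$. Finally one carries out the topological bookkeeping: a priori $G$ is only a Polish group, and one must argue, using the rigidity of nilmanifolds, the structure of the groups involved, and Gleason--Yamabe-type facts, that a suitable quotient of $G$ is a $k$-step nilpotent Lie group acting transitively with discrete cocompact stabilizer; passing to an inverse limit absorbs the remaining infinite-dimensional and compact-abelian pieces, which need not be Lie on the nose. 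This exhibits $(X,\mathcal{Z}_k,\mu,T)$ as an inverse limit of $k$-step nilsystems. The main obstacle is the entire inductive step: first, establishing that $\mathcal{Z}_k \to \mathcal{Z}_{k-1}$ is a compact abelian extension whose cocycle obeys the generalized Conze--Lesigne equation --- this requires the full machinery of the cube measures $\mu^{[k]}$, their symmetry groups, and a delicate cocycle-cohomology analysis; and second, realizing the abstract Polish group built from that equation as an honest $k$-step nilpotent Lie group, with all the attendant issues of non-connectedness. It is precisely this last point that forces the conclusion to read ``inverse limit of $k$-step nilsystems'' rather than ``is a $k$-step nilsystem''.
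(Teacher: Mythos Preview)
The paper does not give its own proof of this statement; it is quoted verbatim as a result of Host and Kra \cite{hostkraoriginal} and used as a black box. Your high-level sketch is a faithful outline of the actual Host--Kra argument (cubic measures and seminorms, induction on $k$ with the Kronecker factor as base case, the $\mathcal{Z}_k \to \mathcal{Z}_{k-1}$ extension being abelian with a cocycle obeying a generalized Conze--Lesigne equation, and the passage from the resulting Polish group to a nilpotent Lie group via inverse limits), so there is nothing to compare: you have correctly identified and summarized the proof the paper is citing.
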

\subsection{Gowers seminorms and $o$-notation}
We will use below the notation $o_{N\to \infty, a_1,\dots,a_k}(1)$, which denotes a quantity depending on $N, a_1,\dots, a_k$ that goes to $0$ as $N \to \infty$ if we fix the quantities $a_1,\dots,a_k$. This convergence is assumed to be uniform on all other quantities that do not explicitly appear in the subindex.
\\ \\
In order to define the Gowers seminorms, we find it convenient to use expectation on finite sets as follows: if $A$ is a finite set, the expectation of a function $f: A \rightarrow \C$ is defined as the following average:
\[ \mathbb{E}_{n \in A}f(n)=\frac{1}{|A|}\sum_{n \in A} f(n).\]
The Gowers seminorms of $f: \Z/N\Z \rightarrow \C$ are inductively defined as follows: 
\[ ||f||_{U_1(\Z/N\Z)}:=|\mathbb{E}_{n \in \Z/N\Z} f(n)|\]
and
\[ ||f||_{U_{d+1}(\Z/N\Z)}:=\left(\mathbb{E}_{h \in \Z/N\Z} ||f_h \cdot \bar{f}||_{U_d(\Z/N\Z)}^{2^d}\right)^{1/2^{d+1}},\]
where $f_h(n):=f(n+h)$ for all $h \in \Z/N\Z$. Gowers showed in \cite{gowers} that $||\cdot||_{U_d}$ is a seminorm for all $d \in \N$.
\section{Technical results}
In this section we introduce a natural variant of Gowers-Host-Kra seminorms for the case of commuting transformations and establish some basic facts which will be needed for the proofs in subsequent sections.
\\ \\
We begin by reviewing a few key concepts that appeared first in Host's article \cite{host1} in an abridged form. 
\subsection{Notation}
We will assume all functions in the following subsections to be measurable and complex-valued. For $k \in \N$, the points in $X^{2^k}$ are written 
\[x=(x_{\varepsilon} : \varepsilon \in \{0,1\}^k).\]
Our convention is to write elements in $\{0,1\}^k$ without commas and parentheses, so that we can append a $0$ or $1$ to it and have them belong to $\{0,1\}^{k+1}$.
If $f_{\varepsilon}, \varepsilon \in \{0,1\}^k$ are functions on $X$, we define a function on $X^{2^k}$ by
\[\left(\bigotimes_{\varepsilon \in \{0,1\}^k} f_{\varepsilon}\right)(x):=\prod_{\varepsilon \in \{0,1\}^k}f_{\varepsilon}(x_{\varepsilon}). \]
Let $d \in \N$ and put $X^*=X^{2^d}$. For $1 \leq i \leq d$, let $T_i^{\Delta}=T_i\times \dots \times T_i$, the diagonal transformation of $X^{2^k}$, so that $(T_i^{\Delta}x)_{\varepsilon}=T_ix_{\varepsilon}$ for all $\varepsilon \in \{0,1\}^k$.
\\ \\
The \emph{side} transformations $T_i^*$ of $X^*$ are given by 
\[ \textrm{ for every } \varepsilon \in \{0,1\}^d, \quad (T_i^*x)_{\varepsilon}=
\begin{cases}
T_ix_{\varepsilon} & \textrm{ if } \varepsilon_i=0; \\
x_{\varepsilon} & \textrm{ if } \varepsilon_i=1.
\end{cases}
\]
\subsection{The box measure and its associated seminorm}
The box measure $\mu^*$ on $X^*$ is constructed inductively as follows. First, define a measure on $X \times X$ by
\[\mu_{T_1}:=\mu \times_{\mathcal{I}(T_1)} \mu,\]
and then, for all $k \leq d-1$, define the measures $\mu_{T_1,\dots,T_{k+1}}$ on $X^{2^{k+1}}$ inductively by
\[\mu_{T_1,\dots,T_{k+1}}=\mu_{T_1,\dots,T_k} \times_{\mathcal{I}(T_{k+1}^{[k]})} \mu_{T_1,\dots, T_k}.\]
Finally, set $\mu^*=\mu_{T_1,\dots,T_d}$ on $X^*$. One can check that $\mu^*$ is invariant under the diagonal and side transformations of $X^*$ as defined above.
\\ \\
The ergodic theorem and downwards induction imply the following result:
\begin{lemma}[see \cite{host1}]
	Let $f_{\varepsilon}, \varepsilon \in \{0,1\}^d$ be functions in $L^{\infty}(\mu)$. Then,
	\[ \int \prod_{\varepsilon \in \{0,1\}^d}\mathcal{C}^{|\varepsilon|}(f_{\varepsilon}(x_{\varepsilon})) \ d\mu^*=\lim_{N_d \to \infty}\frac{1}{|I_{N_d}|}\sum_{n_d \in I_{N_d}}\dots\lim_{N_1 \to \infty}\frac{1}{|I_{N_1}|}\sum_{n_1 \in I_{N_1}} \int \prod_{\varepsilon \in \{0,1\}^d} \mathcal{C}^{|\varepsilon|} (T_1^{(1-\varepsilon_1)n_1}\dots T_d^{(1-\varepsilon_d)n_d}f_{\varepsilon})\ d\mu. \]
	for any sequences of intervals $(I_{N_j})_{N_j \in \N}$, $1 \leq j \leq d$ with lengths going to infinity.
\end{lemma}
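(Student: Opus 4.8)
The plan is to prove the identity by induction on $d$, at each stage peeling off the transformation $T_d$ and using the mean ergodic theorem to turn a conditional expectation into an ergodic average. \emph{Base case $d=1$.} Here $\mu^{*}=\mu_{T_1}=\mu\times_{\mathcal{I}(T_1)}\mu$ and the integrand is $f_0(x_0)\overline{f_1(x_1)}$. Using the defining property of the relatively independent joining, together with the fact that $\mathbb{E}[\,\cdot\,|\mathcal{I}(T_1)]$ is a self-adjoint projection commuting with complex conjugation,
\[ \int f_0(x_0)\overline{f_1(x_1)}\,d\mu_{T_1}=\int_X \mathbb{E}[f_0|\mathcal{I}(T_1)]\,\overline{\mathbb{E}[f_1|\mathcal{I}(T_1)]}\,d\mu=\int_X f_0\cdot\overline{\mathbb{E}[f_1|\mathcal{I}(T_1)]}\,d\mu. \]
By the mean ergodic theorem $\mathbb{E}[f_1|\mathcal{I}(T_1)]$ is the $L^2$-limit of $\frac{1}{|I_{N_1}|}\sum_{n_1\in I_{N_1}}T_1^{n_1}f_1$ along any sequence of intervals whose lengths tend to infinity; interchanging this limit with the integral and using the $T_1$-invariance of $\mu$ to rewrite $\int_X f_0\cdot\overline{T_1^{n_1}f_1}\,d\mu=\int_X (T_1^{-n_1}f_0)\cdot\overline{f_1}\,d\mu$ (and relabelling by $-I_{N_1}$, again an admissible interval sequence) yields exactly the $d=1$ instance of the claimed formula.

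\emph{Inductive step.} Assume the identity for $d-1$ applied to $(X^{2^{d-1}},\mu_{T_1,\dots,T_{d-1}})$ with the diagonal transformations $T_1^{[d-1]},\dots,T_{d-1}^{[d-1]}$. Split $\{0,1\}^d$ into the strings $\eta 0$ and $\eta 1$ with $\eta\in\{0,1\}^{d-1}$, and use the identification $X^{*}=X^{2^{d-1}}\times X^{2^{d-1}}$ behind $\mu^{*}=\mu_{T_1,\dots,T_{d-1}}\times_{\mathcal{I}(T_d^{[d-1]})}\mu_{T_1,\dots,T_{d-1}}$, with the $\{\varepsilon:\varepsilon_d=1\}$-coordinates on one copy and the $\{\varepsilon:\varepsilon_d=0\}$-coordinates on the other. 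Writing the integrand as $\Phi\otimes\Psi$ with $\Phi=\bigotimes_\eta\mathcal{C}^{|\eta|+1}f_{\eta 1}$ and $\Psi=\bigotimes_\eta\mathcal{C}^{|\eta|}f_{\eta 0}$, the defining property of the relatively independent joining gives
\[ \int\prod_{\varepsilon\in\{0,1\}^d}\mathcal{C}^{|\varepsilon|}(f_\varepsilon(x_\varepsilon))\,d\mu^{*}=\int_{X^{2^{d-1}}}\Phi\cdot\mathbb{E}[\Psi\,|\,\mathcal{I}(T_d^{[d-1]})]\,d\mu_{T_1,\dots,T_{d-1}}. \]
Since $\mu_{T_1,\dots,T_{d-1}}$ is invariant under the diagonal transformation $T_d^{[d-1]}$ (a feature of the box measure construction recalled above), the mean ergodic theorem replaces $\mathbb{E}[\Psi\,|\,\mathcal{I}(T_d^{[d-1]})]$ by the $L^2$-limit of $\frac{1}{|I_{N_d}|}\sum_{n_d\in I_{N_d}}(T_d^{[d-1]})^{n_d}\Psi$, so the right-hand side equals $\lim_{N_d\to\infty}\frac{1}{|I_{N_d}|}\sum_{n_d\in I_{N_d}}\int_{X^{2^{d-1}}}\Phi\cdot(T_d^{[d-1]})^{n_d}\Psi\,d\mu_{T_1,\dots,T_{d-1}}$.

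For each fixed $n_d$, applying $(T_d^{[d-1]})^{n_d}$ shifts each coordinate function of $\Psi$ by $T_d^{n_d}$ and multiplying by $\Phi$ merges the $\eta 0$- and $\eta 1$-coordinates, so the $n_d$-th summand equals $\int_{X^{2^{d-1}}}\prod_\eta\mathcal{C}^{|\eta|}\!\big(h_\eta^{(n_d)}(y_\eta)\big)\,d\mu_{T_1,\dots,T_{d-1}}$ with $h_\eta^{(n_d)}:=(T_d^{n_d}f_{\eta 0})\cdot\overline{f_{\eta 1}}\in L^{\infty}(\mu)$. Applying the inductive hypothesis to the family $\big(h_\eta^{(n_d)}\big)_{\eta\in\{0,1\}^{d-1}}$ rewrites this as $\lim_{N_{d-1}}\frac{1}{|I_{N_{d-1}}|}\sum_{n_{d-1}}\dots\lim_{N_1}\frac{1}{|I_{N_1}|}\sum_{n_1}\int_X\prod_\eta\mathcal{C}^{|\eta|}\!\big(T_1^{(1-\eta_1)n_1}\cdots T_{d-1}^{(1-\eta_{d-1})n_{d-1}}h_\eta^{(n_d)}\big)\,d\mu$. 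Since the $T_i$ commute, preserve $\mu$, and commute with conjugation, the operator $T_1^{(1-\eta_1)n_1}\cdots T_{d-1}^{(1-\eta_{d-1})n_{d-1}}$ distributes across the product defining $h_\eta^{(n_d)}$; splitting the resulting product over $\eta$ into its $f_{\eta 0}$- and $f_{\eta 1}$-parts and re-indexing by $\varepsilon=\eta 0$ and $\varepsilon=\eta 1$ — using $|\eta 0|=|\eta|$, $|\eta 1|=|\eta|+1$, and $(1-(\eta 0)_d)n_d=n_d$, $(1-(\eta 1)_d)n_d=0$ — turns it into exactly $\prod_{\varepsilon\in\{0,1\}^d}\mathcal{C}^{|\varepsilon|}\!\big(T_1^{(1-\varepsilon_1)n_1}\cdots T_d^{(1-\varepsilon_d)n_d}f_\varepsilon\big)$. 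Reinstating the outer limit over $N_d$ finishes the induction.

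\emph{Main obstacle.} Everything beyond this is routine; the one point requiring care is the index bookkeeping — keeping the conjugation exponents $\mathcal{C}^{|\varepsilon|}$ and the translation exponents $(1-\varepsilon_i)n_i$ consistent through the splitting $\varepsilon=\eta 0,\eta 1$ and the subsequent recombination — together with the observation that it is immaterial which of the two copies of $X^{2^{d-1}}$ is shifted in the ergodic average (this follows from the symmetry of $\mu_{T_1,\dots,T_{d-1}}\times_{\mathcal{I}}\mu_{T_1,\dots,T_{d-1}}$ and from the freedom to replace each interval sequence by its reflection), and the standard fact that the mean ergodic theorem holds along any sequence of intervals whose lengths tend to infinity.
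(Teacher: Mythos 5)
Your proof is correct and is essentially the argument the paper intends (it gives no proof of this lemma beyond citing Host and noting that ``the ergodic theorem and downwards induction'' yield it): express the box integral via the relatively independent joining over $\mathcal{I}(T_d^{[d-1]})$, replace the conditional expectation by ergodic averages along arbitrary interval sequences via the mean ergodic theorem, and induct on the number of transformations, with the bookkeeping of the exponents $\mathcal{C}^{|\varepsilon|}$ and $(1-\varepsilon_i)n_i$ handled exactly as you do. The only cosmetic remark is in the base case: applying the mean ergodic theorem to $f_0$ (the unconjugated, shifted coordinate) instead of $f_1$ gives the stated formula directly and avoids the reflection/relabelling of the interval sequence.
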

Host then shows that for every $f \in L^{\infty}(\mu)$ one has
\begin{equation}\label{positivity}\int \prod_{\varepsilon \in \{0,1\}^d}\mathcal{C}^{|\varepsilon|}(f(x_{\varepsilon})) \ d\mu^* \geq 0.
\end{equation}
Alternatively, one can show the inequality \eqref{positivity} using an argument similar to that in the proof of Theorem 0.5 in \cite{bl}. The inequality \eqref{positivity} then allows the definition
\begin{definition} For $f \in L^{\infty}(\mu)$ let
	\[|||f|||_{T_1,\dots,T_d}:=\left(\int \prod_{\varepsilon \in \{0,1\}^d}\mathcal{C}^{|\varepsilon|}(f(x_{\varepsilon})) \ d\mu^* \right)^{1/2^d}.\]
\end{definition}
It is easily checked that
\[|||f|||_{T_1,\dots,T_d}^{2^d}=\lim_{N_d \to \infty}\frac{1}{|I_{N_d}|}\sum_{n \in I_{N_d}}|||T_d^{n}f \cdot \bar{f}|||_{T_1,\dots,T_{d-1}}^{2^{d-1}},\]
where $(I_{N_d})_{N \in \N}$ is any F\o lner sequence in $\Z$.
\begin{proposition}[Host \cite{host1}]
	\begin{itemize}
		\item[(i)] For $f_{\varepsilon} \in L^{\infty}(\mu)$ for all $\varepsilon \in \{0,1\}^d$ we have
		\[\left| \int \bigotimes_{\varepsilon \in \{0,1\}^d} \mathcal{C}^{|\varepsilon|}f_{\varepsilon} \ d\mu^*\right| \leq \prod_{\varepsilon \in \{0,1\}^d} |||f_{\varepsilon}|||_{T_1,\dots,T_d},\]
		\item[(ii)] $|||\cdot|||_{T_1,\dots,T_d}$ is a seminorm on $L^{\infty}(\mu)$.
	\end{itemize}
\end{proposition}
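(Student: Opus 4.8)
The plan is to recognize part (i) as the Cauchy--Schwarz--Gowers (``box'') inequality attached to the measure $\mu^*$, and to prove it by $d$ successive applications of the ordinary Cauchy--Schwarz inequality, one in each coordinate direction of the cube $\{0,1\}^d$; part (ii) then follows formally from (i) together with the positivity \eqref{positivity}, which is already granted and which guarantees that $|||f|||_{T_1,\dots,T_d}$ is a well-defined nonnegative real number. To begin the proof of (i), write $\Lambda(f_\bullet):=\int \bigotimes_{\varepsilon\in\{0,1\}^d}\mathcal{C}^{|\varepsilon|}f_\varepsilon \ d\mu^*$ for a family $f_\bullet=(f_\varepsilon)_{\varepsilon\in\{0,1\}^d}$, and note that $\Lambda$ is linear in $f_\varepsilon$ when $|\varepsilon|$ is even and conjugate-linear when $|\varepsilon|$ is odd.

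Fix a coordinate $i\in\{1,\dots,d\}$ and split $\{0,1\}^d$ according to its $i$-th digit, writing each $\varepsilon$ as some $\eta\in\{0,1\}^{d-1}$ with a $0$ or a $1$ inserted in position $i$. The structural input I would use here is that $\mu^*$ decomposes as a relatively independent self-joining of the ``$i$-face'' measure (a copy of $\mu_{T_1,\dots,T_{d-1}}$ on $X^{2^{d-1}}$) over the invariant $\sigma$-algebra $\mathcal{I}(T_i^{[d-1]})$: for $i=d$ this is exactly the final step of the inductive definition of $\mu^*$, and for $i<d$ it follows from the invariance of $\mu^*$ under the symmetries of the cube (the commuting-transformation analogue of the Host--Kra symmetries of $\mu^{[k]}$), which Host establishes. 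Grouping the tensor factors with $i$-th digit $0$ into a function $F_0$ on the face and those with $i$-th digit $1$, in conjugated form, into $\overline{F_1}$, the defining property of the relatively independent joining gives
\[ \Lambda(f_\bullet)=\int \mathbb{E}[F_0\mid \mathcal{I}(T_i^{[d-1]})]\cdot \overline{\mathbb{E}[F_1\mid \mathcal{I}(T_i^{[d-1]})]}\ d\mu_{T_1,\dots,T_{d-1}}. \]
Applying Cauchy--Schwarz in $L^2(\mu_{T_1,\dots,T_{d-1}})$, and using that $\|\mathbb{E}[F\mid\mathcal{I}(T_i^{[d-1]})]\|_2^2=\int F\otimes\bar F \ d\mu^*$, whose right-hand side is again of the form $\Lambda$ of a family obtained by doubling the $i$-th digit, we obtain
\[ |\Lambda(f_\bullet)|^2\le \Lambda(g_\bullet)\cdot\Lambda(h_\bullet), \]
where the families $g_\bullet,h_\bullet$ are obtained from $f_\bullet$ by forcing the $i$-th digit to be constant ($g$: always $0$ in position $i$; $h$: always $1$ in position $i$), so in particular $g_\bullet$ and $h_\bullet$ no longer depend on the $i$-th digit. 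Iterating this step for $i=1,2,\dots,d$ --- each iteration symmetrizing one more coordinate and squaring the left-hand side once --- after $d$ steps all surviving families are constant, and the $2^d$ resulting constants are precisely $f_\varepsilon$ as $\varepsilon$ ranges over $\{0,1\}^d$. Since $\Lambda$ of the constant family equal to $f_\varepsilon$ is $|||f_\varepsilon|||_{T_1,\dots,T_d}^{2^d}$ by definition, we conclude
\[ |\Lambda(f_\bullet)|^{2^d}\le \prod_{\varepsilon\in\{0,1\}^d}|||f_\varepsilon|||_{T_1,\dots,T_d}^{2^d}, \]
and taking $2^d$-th roots yields (i).

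For part (ii), homogeneity $|||cf|||_{T_1,\dots,T_d}=|c|\cdot|||f|||_{T_1,\dots,T_d}$ is immediate from the multilinearity of $\Lambda$, since replacing $f$ by $cf$ in all $2^d$ slots multiplies $\Lambda$ by $|c|^{2^d}$. For subadditivity, expand $\Lambda$ multilinearly along $f+g$:
\[ |||f+g|||_{T_1,\dots,T_d}^{2^d}=\sum_{S\subseteq\{0,1\}^d}\Lambda(u_\bullet^S), \]
where $u_\varepsilon^S=f$ for $\varepsilon\in S$ and $u_\varepsilon^S=g$ otherwise. By (i), $|\Lambda(u_\bullet^S)|\le |||f|||_{T_1,\dots,T_d}^{|S|}\cdot|||g|||_{T_1,\dots,T_d}^{2^d-|S|}$, so summing over all $S$ and invoking the binomial theorem gives $|||f+g|||_{T_1,\dots,T_d}^{2^d}\le(|||f|||_{T_1,\dots,T_d}+|||g|||_{T_1,\dots,T_d})^{2^d}$. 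Taking $2^d$-th roots and recalling nonnegativity from \eqref{positivity}, we conclude that $|||\cdot|||_{T_1,\dots,T_d}$ is a seminorm.

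\emph{Main obstacle.} The one genuinely non-formal ingredient is the directional joining decomposition of $\mu^*$ invoked at the start of each Cauchy--Schwarz step: that $\mu^*$ is a relatively independent self-joining over $\mathcal{I}(T_i^{[d-1]})$ for \emph{every} coordinate $i$, not merely the coordinate singled out by its inductive construction. This is exactly the ``cube symmetry'' of $\mu^*$ --- the analogue for commuting $T_1,\dots,T_d$ of the Host--Kra invariance properties of $\mu^{[k]}$ --- which Host establishes; granting it, the rest (the Cauchy--Schwarz bookkeeping and the binomial expansion in (ii)) is routine.
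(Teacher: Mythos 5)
Your argument is correct: the paper itself gives no proof (it quotes the result from Host \cite{host1}), and what you write is essentially Host's own proof — the Gowers--Cauchy--Schwarz iteration using the relatively independent joining structure of $\mu^*$ in each coordinate direction, followed by the standard multilinear expansion and binomial bound for subadditivity. You also correctly isolate the one nontrivial input, namely the symmetry of $\mu^*$ allowing the directional decomposition over $\mathcal{I}(T_i^{[d-1]})$ for every $i$ (not just $i=d$), which is exactly the lemma Host establishes before deriving the proposition.
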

\subsection{Technical results on seminorms for commuting transformations}
In this subsection we formulate technical results which will be needed in the sequel:
\begin{proposition}[Host \cite{host1}]\label{seminorm1} Let $(X,\mathcal{B},\mu,S_1,\dots,S_d)$ be a measure preserving system. Let $T_1=S_1$ and $T_i=S_1S_i^{-1}$ for $2 \leq i \leq d$. Then, for every $f_1,\dots,f_d \in L^{\infty}(\mu)$ with $||f_i||_{L^{\infty}(\mu)} \leq 1$ for $2 \leq i \leq d$ we have
	\[ \limsup_{N \to \infty} \left|\left| \frac{1}{|F_N|}\sum_{n \in F_N} S_1^n f_1 \cdot \dotso \cdot S_d^nf_d \right|\right|_{L^2(\mu)} \leq |||f_1|||_{T_1,\dots,T_d},\]
	for every F\o lner sequence $(F_N)_{N \in \N}$ in $\Z$.
\end{proposition}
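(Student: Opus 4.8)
The plan is to establish the bound by induction on $d$, each inductive step reducing $d$ to $d-1$ via the van der Corput inequality. The base case $d=1$ is immediate: there $T_1=S_1$, so the mean ergodic theorem along the F\o lner sequence $(F_N)$ shows that $\frac{1}{|F_N|}\sum_{n\in F_N}S_1^nf_1$ converges in $L^2(\mu)$ to $\mathbb{E}[f_1\mid\mathcal{I}(S_1)]$, while unwinding the definition of the box measure for $d=1$ gives $|||f_1|||_{T_1}^2=\int_X\mathbb{E}[f_1\mid\mathcal{I}(T_1)]\,\overline{\mathbb{E}[f_1\mid\mathcal{I}(T_1)]}\,d\mu=\|\mathbb{E}[f_1\mid\mathcal{I}(S_1)]\|_{L^2}^2$; hence the desired inequality holds (with equality).

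For the inductive step, assume the statement for $d-1$, write $u_n:=S_1^nf_1\cdots S_d^nf_d$ and $A_N:=\frac{1}{|F_N|}\sum_{n\in F_N}u_n$, and apply the van der Corput inequality in $L^2(\mu)$:
\[\limsup_{N\to\infty}\|A_N\|_{L^2}^2\ \le\ \limsup_{H\to\infty}\frac1H\sum_{h=1}^H\ \limsup_{N\to\infty}\Bigl|\,\frac{1}{|F_N|}\sum_{n\in F_N}\langle u_{n+h},u_n\rangle\,\Bigr|.\]
The key (purely formal) identity is that, since the $S_i$ commute and preserve $\mu$, $\langle u_{n+h},u_n\rangle=\int_X\prod_{i=1}^dS_i^nG_{i,h}\,d\mu$ with $G_{i,h}:=S_i^hf_i\cdot\bar f_i$, and $\|G_{i,h}\|_{L^\infty}\le\|f_i\|_{L^\infty}^2\le 1$ for $2\le i\le d$. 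Next I would apply the measure-preserving substitution $x\mapsto S_2^{-n}x$; using $T_1=S_1$ and $S_i=T_1T_i^{-1}$ one computes $S_2^{-1}S_1=T_2$ and $S_2^{-1}S_i=T_2T_i^{-1}$ for $i\ge 3$, so that
\[\frac{1}{|F_N|}\sum_{n\in F_N}\langle u_{n+h},u_n\rangle=\Bigl\langle\, \frac{1}{|F_N|}\sum_{n\in F_N}T_2^nG_{1,h}\cdot\prod_{i=3}^d(T_2T_i^{-1})^nG_{i,h}\,,\ \overline{G_{2,h}}\,\Bigr\rangle,\]
and Cauchy--Schwarz together with $\|G_{2,h}\|_{L^2}\le 1$ bounds the modulus of this by the $L^2$-norm of the $(d-1)$-fold average appearing on the left of the inner product.

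That $(d-1)$-fold average is precisely of the shape handled by the inductive hypothesis, applied to the commuting system with transformations $\widehat S_1:=T_2$ and $\widehat S_\ell:=T_2T_{\ell+1}^{-1}$ for $2\le\ell\le d-1$: its associated transformations $\widehat T_1=\widehat S_1=T_2$ and $\widehat T_\ell=\widehat S_1\widehat S_\ell^{-1}=T_{\ell+1}$ are exactly $T_2,\dots,T_d$; the function in the distinguished ``$f_1$''-slot is $G_{1,h}=S_1^hf_1\cdot\bar f_1=T_1^hf_1\cdot\bar f_1$; and the remaining functions $G_{3,h},\dots,G_{d,h}$ have $L^\infty$-norm at most $1$, as required. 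So the inductive hypothesis gives $\limsup_N\bigl|\frac{1}{|F_N|}\sum_{n\in F_N}\langle u_{n+h},u_n\rangle\bigr|\le|||T_1^hf_1\cdot\bar f_1|||_{T_2,\dots,T_d}$. Feeding this back into van der Corput, applying the power-mean (H\"older) inequality with exponent $2^{d-1}$, and invoking the recursive formula for the seminorm together with its symmetry under permutations of $T_1,\dots,T_d$ — whence $\lim_{H\to\infty}\frac1H\sum_{h=1}^H|||T_1^hf_1\cdot\bar f_1|||_{T_2,\dots,T_d}^{2^{d-1}}=|||f_1|||_{T_1,\dots,T_d}^{2^d}$ — yields $\limsup_N\|A_N\|_{L^2}^2\le|||f_1|||_{T_1,\dots,T_d}^2$, and taking square roots finishes the induction.

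I expect the main obstacle to be the bookkeeping in the van der Corput step: carrying out the substitution $x\mapsto S_2^{-n}x$ cleanly and verifying that the ``associated transformations'' of the resulting $(d-1)$-dimensional system come out to be exactly $T_2,\dots,T_d$, so that the inductive hypothesis applies without modification. A secondary point needing care is tracking which hypotheses are used where: the bound $\|f_i\|_{L^\infty}\le 1$ for $i\ge 2$ is exactly what controls $\|G_{2,h}\|_{L^2}$ in the Cauchy--Schwarz step and the non-distinguished functions $G_{3,h},\dots,G_{d,h}$ in the inductive hypothesis, while no constraint on $f_1$ is needed since it only ever occupies the distinguished slot; and one must appeal to the symmetry of $|||\cdot|||_{T_1,\dots,T_d}$ (a consequence of the invariance of the box measure $\mu^*$ under the corresponding permutation of the coordinates of $X^{2^d}$) to reconcile with the recursive formula, which as stated peels off $T_d$ rather than $T_1$.
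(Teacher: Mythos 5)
The paper does not prove this proposition; it is imported verbatim from Host \cite{host1}, so there is no internal argument to compare against. Your proof is correct and is essentially Host's original one: induction on $d$, base case by the mean ergodic theorem, then van der Corput, composition with $S_2^{-n}$ so that $S_2^{-1}S_1=T_2$ and $S_2^{-1}S_i=T_2T_i^{-1}$, Cauchy--Schwarz in the $f_2$-slot, the inductive bound $|||T_1^hf_1\cdot\bar f_1|||_{T_2,\dots,T_d}$, and finally H\"older, the recursive formula $\lim_H\frac1H\sum_{h\le H}|||T_1^hf_1\cdot\bar f_1|||^{2^{d-1}}_{T_2,\dots,T_d}=|||f_1|||^{2^d}_{T_2,\dots,T_d,T_1}$ and permutation invariance of the seminorm; your tracking of where $\|f_i\|_\infty\le1$ ($i\ge2$) is used is also right. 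The only step worth making explicit is the constant-free, one-sided van der Corput inequality you quote: to get it without a spurious factor $2$ one uses the conjugate symmetry of $h\mapsto\limsup_N|\frac{1}{|F_N|}\sum_{n\in F_N}\langle u_{n+h},u_n\rangle|$ along F\o lner sequences (or keeps the Fej\'er weights and uses that the $2^{d-1}$-power Ces\`aro averages converge), which is standard and harmless in the way you then combine it with H\"older.
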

\begin{theorem}[Corollary 3 \cite{host1}]\label{permuting} The seminorm $|||\cdot|||_{T_1,\dots,T_d}$ remains unchanged if the transformations $T_1,\dots,T_d$ are permuted.
\end{theorem}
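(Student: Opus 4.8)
The plan is to exhibit both $|||f|||_{T_1,\dots,T_d}$ and $|||f|||_{T_{\sigma(1)},\dots,T_{\sigma(d)}}$, for an arbitrary permutation $\sigma$ of $\{1,\dots,d\}$, as one and the same iterated limit up to the order in which the $d$ successive averages are taken, and then to argue that this order is immaterial. (Since $S_d$ is generated by transpositions one could restrict to a single adjacent transposition, but there is no real gain.) First I would invoke the Lemma above (the one expressing the box integral as an iterated ergodic average) with all $f_\varepsilon=f$, which reads
\[ |||f|||_{T_1,\dots,T_d}^{2^d}=\lim_{N_d\to\infty}\mathbb{E}_{n_d\in I_{N_d}}\cdots\lim_{N_1\to\infty}\mathbb{E}_{n_1\in I_{N_1}}\,b_{T_1,\dots,T_d}(n_1,\dots,n_d), \]
where $b_{T_1,\dots,T_d}(\vec n):=\int_X\prod_{\varepsilon\in\{0,1\}^d}\mathcal{C}^{|\varepsilon|}\big(\prod_{i=1}^d T_i^{(1-\varepsilon_i)n_i}f\big)\,d\mu$ and the $(I_{N_j})$ are arbitrary interval sequences ($1\le j\le d$) with lengths tending to infinity.

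Applying this identity to the list $T_{\sigma(1)},\dots,T_{\sigma(d)}$ and then carrying out the substitution $\delta=\varepsilon\circ\sigma^{-1}$ on the binary labels (note $|\delta|=|\varepsilon|$), the commutativity of the $T_i$ lets me rewrite the inner product $\prod_i T_{\sigma(i)}^{(1-\varepsilon_i)n_i}$ as $\prod_j T_j^{(1-\delta_j)n_{\sigma^{-1}(j)}}$, so that $b_{T_{\sigma(1)},\dots,T_{\sigma(d)}}(n_1,\dots,n_d)=b_{T_1,\dots,T_d}(n_{\sigma^{-1}(1)},\dots,n_{\sigma^{-1}(d)})$. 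Hence $|||f|||_{T_{\sigma(1)},\dots,T_{\sigma(d)}}^{2^d}$ is the iterated limit of the \emph{same} function $b:=b_{T_1,\dots,T_d}$ over the \emph{same} $d$ coordinates, the only change being that the $d$ limits are taken in the order dictated by $\sigma$; and, by the freedom in the Lemma to choose the interval sequences, I may use a single sequence $(I_N)$ in every coordinate. The theorem is thus reduced to the claim that the iterated limit
\[ \lim_{N_d\to\infty}\mathbb{E}_{n_d\in I_{N_d}}\cdots\lim_{N_1\to\infty}\mathbb{E}_{n_1\in I_{N_1}}\,b(n_1,\dots,n_d) \]
does not change if the $d$ limits are performed in another order; equivalently, that it equals the genuine multiparameter average $\lim_{N\to\infty}\mathbb{E}_{(n_1,\dots,n_d)\in I_N^{\,d}}b(n_1,\dots,n_d)$, which is manifestly unchanged by permuting the $n_i$.

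This limit-interchange is the crux, and the place where real work is needed. It is genuinely nontrivial: the bare existence of the iterated limits (which is all the Lemma asserts) does not suffice, since iterated Cesàro averages of a bounded function need not be interchangeable --- e.g.\ for $b(n,m)=(-1)^n+\mathbf{1}_{\{m\le n^2\}}$ the iterated averages over $\{1,\dots,N\}^2$ in the two orders tend, respectively, to $0$ and to $1$. What makes the interchange legitimate in our situation is that $b(\vec n)$ is not arbitrary: up to replacing some of the copies of $f$ by $\bar f$ (which changes nothing essential), it is a multiple correlation sequence for the commuting transformations $T_1,\dots,T_d$, i.e.\ it comes from the $\Z^d$-action they generate. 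For such sequences the box averages $\mathbb{E}_{\vec n\in I_N^{\,d}}b(\vec n)$ converge, the limit is independent of the F\o lner sequence, and it agrees with every iterated-average version --- equivalently, the iterated limit furnished by the Lemma is itself an honest $\Z^d$-F\o lner average. This can be obtained from the van der Corput / PET induction of Bergelson and Leibman for commuting transformations (or from norm convergence of the relevant multiple ergodic averages), and carrying it out is where the substance of the argument lies. Granting it, the chain of identities above gives $|||f|||_{T_{\sigma(1)},\dots,T_{\sigma(d)}}=|||f|||_{T_1,\dots,T_d}$ for every $\sigma\in S_d$, which is the assertion of the theorem.
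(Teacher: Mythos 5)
The paper does not actually prove this statement: it is imported verbatim from Host (Corollary~3 of \cite{host1}), so the only question is whether your argument stands on its own. Your bookkeeping step is correct: with $\delta=\varepsilon\circ\sigma^{-1}$ one indeed gets $b_{T_{\sigma(1)},\dots,T_{\sigma(d)}}(n_1,\dots,n_d)=b_{T_1,\dots,T_d}(n_{\sigma^{-1}(1)},\dots,n_{\sigma^{-1}(d)})$, so the assertion reduces to the statement that the $d$ iterated Ces\`aro limits furnished by the Lemma may be taken in any order, equivalently that they agree with a genuine multiparameter average over boxes $I_{N_1}\times\dots\times I_{N_d}$ with all side lengths tending to infinity independently.

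That reduction, however, is essentially a restatement of the theorem rather than a proof of it, and the step you label as the crux is exactly the part you do not carry out. The identification of the iterated limit with a joint multiparameter limit is the nontrivial convergence theorem for these cube averages of commuting transformations, and it is precisely what Host establishes in \cite{host1} by the inductive construction of the box measures $\mu_{T_1,\dots,T_k}$ and conditional expectations onto $\mathcal{I}(T_{k}^{[k-1]})$; permutation invariance is then read off from the symmetric multiparameter expression. Your proposed shortcuts do not fill this in: PET/van der Corput induction yields characteristic-factor and vanishing statements for one-parameter polynomial averages, not the equality of a $d$-parameter joint limit with the particular iterated limit produced by the Lemma, and, as your own counterexample shows, existence of the iterated limits alone does not license the interchange; some uniform (joint) convergence statement must be proved, not granted. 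So the proposal has a genuine gap at its central step: the conclusion is derived from an unproved claim at least as strong as the theorem itself. If one is willing to quote Host's multiparameter convergence result, one may as well quote Corollary~3 directly, which is what the paper does.
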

The following lemma is given for complex valued functions, as the argument does not require significant changes:
\begin{lemma}\label{lem1}
	Let $d \in \N$. Let $(X,\mathcal{B},\mu,T_1,\dots,T_d)$ be a measure preserving system. Let $f \in L^{\infty}(\mu)$. Then,
	\[ |||f \otimes \bar{f}|||_{T_1 \times T_1,\dots,T_d \times T_d} \leq ||| f|||_{T_1,\dots,T_d,T_i}^2,\]
	for all $1 \leq i \leq d$.
\end{lemma}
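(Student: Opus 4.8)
The plan is to prove the inequality by induction on $d$, using two ingredients already at hand: the recursive identity
\[ |||F|||_{S_1,\dots,S_k}^{2^k}=\lim_{N\to\infty}\frac{1}{|I_N|}\sum_{n\in I_N}|||S_k^{n}F\cdot\bar F|||_{S_1,\dots,S_{k-1}}^{2^{k-1}}, \]
valid for every measure preserving system $(Y,\mathcal{C},\nu,S_1,\dots,S_k)$ and every F\o lner sequence $(I_N)$ in $\Z$ (in particular for the product system $(X\times X,\mathcal{B}\otimes\mathcal{B},\mu\times\mu,T_1\times T_1,\dots,T_d\times T_d)$, on which $|||\cdot|||$ is defined since the positivity \eqref{positivity} holds for any system), and the invariance of $|||\cdot|||_{T_1,\dots,T_d}$ under permutations of the transformations (Theorem~\ref{permuting}).

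\emph{Base case $d=1$.} First I would show $|||f\otimes\bar f|||_{T_1\times T_1}\le|||f|||_{T_1,T_1}^{2}$. By definition the left side squared equals $\int_{X\times X}|\mathbb{E}[f\otimes\bar f\mid\mathcal{I}(T_1\times T_1)]|^{2}\,d(\mu\times\mu)$; writing the conditional expectation as the $L^2$-limit $\lim_N\frac1{|I_N|}\sum_{n\in I_N}(T_1\times T_1)^n(f\otimes\bar f)$ via the mean ergodic theorem along $(I_N)$, and using the pointwise identity $(T_1\times T_1)^n(f\otimes\bar f)\cdot\overline{(f\otimes\bar f)}=g_n\otimes\bar g_n$ with $g_n:=T_1^n f\cdot\bar f$, one obtains $|||f\otimes\bar f|||_{T_1\times T_1}^{2}=\lim_N\frac1{|I_N|}\sum_{n\in I_N}\big|\int_X g_n\,d\mu\big|^{2}$. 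Since $1\in L^\infty(\mu)$ is $\mathcal{I}(T_1)$-measurable, the identity $\int_X g\,d\mu=\int_X\mathbb{E}[g\mid\mathcal{I}(T_1)]\,d\mu$ and Cauchy--Schwarz give $\big|\int_X g\,d\mu\big|^2\le\int_X|\mathbb{E}[g\mid\mathcal{I}(T_1)]|^2\,d\mu=|||g|||_{T_1}^2$ for every $g\in L^\infty(\mu)$; applying this to $g=g_n$ and then the recursion with $k=2$, $S_1=S_2=T_1$ (whose right-hand side is exactly $|||f|||_{T_1,T_1}^{4}$), finishes the base case.

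\emph{Inductive step.} Assume the statement for $d-1$; let $d\ge 2$ and $1\le i\le d$. I would pick an index $j\in\{1,\dots,d\}\setminus\{i\}$ and, using Theorem~\ref{permuting} to permute the transformations simultaneously on both sides of the claimed inequality (which changes neither side), reduce to $j=d$, so that $i\le d-1$. Applying the recursion to the product system with $S_\ell=T_\ell\times T_\ell$ and the identity $(T_d\times T_d)^n(f\otimes\bar f)\cdot\overline{(f\otimes\bar f)}=g_n\otimes\bar g_n$ with $g_n:=T_d^n f\cdot\bar f$, I get
\[ |||f\otimes\bar f|||_{T_1\times T_1,\dots,T_d\times T_d}^{2^{d}}=\lim_{N\to\infty}\frac{1}{|I_N|}\sum_{n\in I_N}|||g_n\otimes\bar g_n|||_{T_1\times T_1,\dots,T_{d-1}\times T_{d-1}}^{2^{d-1}}. \]
Now apply the induction hypothesis to each $g_n$ on the system $(X,\mathcal{B},\mu,T_1,\dots,T_{d-1})$ with the same index $i\le d-1$, obtaining $|||g_n\otimes\bar g_n|||_{T_1\times T_1,\dots,T_{d-1}\times T_{d-1}}\le|||g_n|||_{T_1,\dots,T_{d-1},T_i}^{2}$. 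Raising to the power $2^{d-1}$, summing, and reading the right-hand side backwards through the recursion applied to the $(d+1)$-tuple $(T_1,\dots,T_{d-1},T_i,T_d)$ identifies it with $|||f|||_{T_1,\dots,T_{d-1},T_i,T_d}^{2^{d+1}}$. Since this tuple is a permutation of $(T_1,\dots,T_d,T_i)$, Theorem~\ref{permuting} and taking $2^d$-th roots yield $|||f\otimes\bar f|||_{T_1\times T_1,\dots,T_d\times T_d}\le|||f|||_{T_1,\dots,T_d,T_i}^{2}$, which is the assertion.

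\emph{Main obstacle.} Essentially everything here is bookkeeping; the one point requiring care is the choice, in the inductive step, to split off a transformation \emph{different from the repeated one} $T_i$. Only with that choice does the extra copy of $T_i$ that reappears from the recursion lie among the first $d-1$ transformations, so that the induction hypothesis — which requires the distinguished index to be at most the number of transformations present — applies verbatim. The product-space computation of $(T_j\times T_j)^n(f\otimes\bar f)\cdot\overline{(f\otimes\bar f)}$ and the Cauchy--Schwarz step in the base case are routine.
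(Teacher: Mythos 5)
Your argument is correct, and the inductive step is essentially the paper's: split off the last transformation with the recursion $|||f|||_{T_1,\dots,T_d}^{2^d}=\lim_N\frac{1}{|I_N|}\sum_{n\in I_N}|||T_d^nf\cdot\bar f|||^{2^{d-1}}_{T_1,\dots,T_{d-1}}$ on the product system, use the identity $(T_d\times T_d)^n(f\otimes\bar f)\cdot\overline{(f\otimes\bar f)}=(T_d^nf\cdot\bar f)\otimes\overline{(T_d^nf\cdot\bar f)}$, apply the induction hypothesis termwise, resum via the recursion for the tuple $(T_1,\dots,T_{d-1},T_i,T_d)$, and finish with Theorem \ref{permuting}. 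Where you genuinely diverge is the base case: the paper runs it through the ergodic decomposition $\mu=\int_\Omega\mu_\omega\,d\lambda$, Cauchy--Schwarz over $\Omega$, dominated convergence, and an identification of the ergodic-component seminorms (Proposition 18 of Chapter 8 in \cite{hostkrabook}), whereas you stay on the global system and simply bound $\bigl|\int_X g_n\,d\mu\bigr|^2=\bigl|\int_X\mathbb{E}[g_n\mid\mathcal{I}(T_1)]\,d\mu\bigr|^2\le\int_X|\mathbb{E}[g_n\mid\mathcal{I}(T_1)]|^2\,d\mu=|||g_n|||_{T_1}^2$ and invoke the recursion with $S_1=S_2=T_1$. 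Your route is more elementary, needs no ergodicity of the components or the Host--Kra proposition, and makes the base case structurally parallel to the inductive step; the paper's route has the side benefit of exhibiting the seminorm as an integral of ergodic-component seminorms. You are also more careful than the paper on one point: by permuting first so that the distinguished index $i$ lies among the first $d-1$ transformations, you cover the case $i=d$ explicitly, which the paper only absorbs implicitly in its final appeal to permutation invariance.
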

\begin{proof}
	We proceed by induction on $d \in \N$. Let $d=1$. We are going to show that\\ $|||f \otimes \bar{f}|||_{T_1 \times T_1} \leq |||f|||_{T_1,T_1}^2$. Consider the ergodic decomposition of $\mu$ with respect to $T_1$:
	\begin{equation}\label{ergdec1} \mu=\int_{\Omega} \mu_{\omega} \ d\lambda(\omega). \end{equation}
	We have
	\[ ||| f \otimes \bar{f}|||_{T_1 \times T_1}^2=\lim_{N \to \infty} \frac{1}{N} \sum_{n=1}^N \int_{X^2} (T_1^nf \otimes T_1^n \bar{f})\bar{f} \otimes f \ d\mu \ d\mu=\lim_{N \to \infty}\frac{1}{N} \sum_{n=1}^N\left| \int_X \bar{f} T_1^n f \ d\mu \right|^2= \]
	(using \eqref{ergdec1})
	\[ \lim_{N \to \infty} \frac{1}{N} \sum_{n=1}^N \left|\int_{\Omega}\int_X \bar{f} T_1^n f \ d\mu_{\omega} \ d\lambda \right|^2 \leq \int_{\Omega} \lim_{N \to \infty} \frac{1}{N} \sum_{n=1}^N \left| \int_X \bar{f} T_1^nf \ d\mu_{\omega} \right|^2 \ d\lambda, \]
	where we used the Cauchy-Schwarz inequality first, then linearity of $\int_{\Omega}$ and the Dominated Convergence Theorem. Next, rewriting the expression in the right hand side of the formula above we obtain
	\[ \int_{\Omega} \lim_{N \to \infty} \frac{1}{N} \sum_{n=1}^N \int_{X^2} (T_1^nf \otimes T_1^n \bar{f} )\bar{f} \otimes f \ d\mu_{\omega} \ d\mu_{\omega} \ d\lambda\]
	which equals 
	\begin{equation} \int_{\Omega} \int_{X^2} \mathbb{E}_{\mu_{\omega} \otimes \mu_{\omega}}[f \otimes \bar{f} \mid \mathcal{I}(T_1 \times T_1) \mathbb{E}_{\mu_{\omega} \otimes \mu_{\omega}}[\bar{f} \otimes f \mid \mathcal{I}(T_1 \times T_1)] \ d\mu_{\omega} \ d\mu_{\omega} \ d\lambda \label{erg1}\end{equation}
	(using the ergodic theorem, and properties of conditional expectation). Finally, we notice that since for a.e. $\omega \in \Omega$, $\mu_{\omega}$ is ergodic, then, the definition of the seminorms simplifies a little bit, so by Proposition 18 in Chapter 8 of \cite{hostkrabook} we can rewrite equation \eqref{erg1} as
	\[ \int_{\Omega} |||f|||_{\mu_{\omega},T_1,T_1}^4 \ d\lambda=|||f|||_{T_1,T_1}^4. \]
	So we showed $|||f \otimes \bar{f}|||^2_{T_1 \times T_1} \leq |||f|||_{T_1,T_1}^4$. Taking square roots completes the proof of the base case.
	\\ \\
	So now suppose that the result in question holds for $d=d_0 \geq 1$, and consider
	\[ ||| f \otimes \bar{f}|||_{T_1 \times T_1, \dots, T_{d_0+1} \times T_{d_0+1}}^{2^{d_0+1}}=\lim_{N \to \infty}\frac{1}{N}\sum_{n=1}^N |||T_{d_0+1}^n f \cdot \bar{f} \otimes T_{d_0+1}^n \bar{f} \cdot f|||^{2^{d_0}}_{T_1 \times T_1, \dots, T_{d_0} \times T_{d_0}} \leq\]
	(using the inductive hypothesis)
	\[ \limsup_{N \to \infty} \frac{1}{N}\sum_{n=1}^N |||T_{d_0+1}^nf \cdot \bar{f}|||_{T_1,\dots,T_{d_0},T_i}^{2^{d_0+1}}=|||f|||_{T_1,\dots,T_{d_0},T_{i},T_{d_0+1}}^{2^{d_0+2}}\]
	Finally, note that seminorms for commuting transformations do not depend on the order of the transformations involved (see Corollary 3 in \cite{host1}), so that permuting them in the expression above, gives the desired result.
\end{proof}
\begin{lemma}\label{lem2}
	Let $d \in \N$. Let $(X,\mathcal{B},\mu,T_1,\dots,T_d)$ be a measure preserving system. Let $f \in L^{\infty}(\mu)$. Suppose that the measure preserving transformations $T_1,\dots,T_d$ are ergodic. Then, we have 
	\begin{equation}\label{seminormequality1} |||f|||_{T_1,\dots,T_d}=|||f|||_{\underbrace{T_i,T_i,\dots,T_i}_d},\end{equation}
	for all $1 \leq i \leq d$.
\end{lemma}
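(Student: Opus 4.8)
The plan is to prove, by induction on $d$, the following statement $(\star_d)$: for every $d$-tuple $(S_1,\dots,S_d)$ of commuting ergodic measure preserving transformations of $(X,\mathcal{B},\mu)$ and every $f\in L^\infty(\mu)$ one has $|||f|||_{S_1,\dots,S_d}=|||f|||_{S_i,\dots,S_i}$ ($d$ copies of $S_i$) for each $1\le i\le d$; applied to $(T_1,\dots,T_d)$ this is precisely \eqref{seminormequality1}. The only structural input is that ergodicity of a single transformation $S$ makes $\mathcal{I}(S)$ trivial, so $\mu_S=\mu\times_{\mathcal{I}(S)}\mu=\mu\times\mu$ and hence $|||g|||_S^2=\int_X\bigl|\mathbb{E}[g\mid\mathcal{I}(S)]\bigr|^2\,d\mu=\bigl|\int_X g\,d\mu\bigr|^2$ for every $g\in L^\infty(\mu)$ — a value that does not depend on which ergodic $S$ is used. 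The case $d=1$ is vacuous. For $d=2$, the recursive formula for the box seminorm (for an arbitrary F\o lner sequence $(I_N)$ in $\Z$) together with ergodicity of $S_1$ gives $|||f|||_{S_1,S_2}^{4}=\lim_{N}\frac{1}{|I_N|}\sum_{n\in I_N}|||S_2^{n}f\cdot\bar f|||_{S_1}^{2}=\lim_{N}\frac{1}{|I_N|}\sum_{n\in I_N}\bigl|\int_X S_2^{n}f\cdot\bar f\,d\mu\bigr|^2$; the identical computation with $S_1$ replaced by $S_2$ yields $|||f|||_{S_2,S_2}^4$, and the identity $|||f|||_{S_1,S_2}=|||f|||_{S_2,S_1}$ of Theorem \ref{permuting} combined with the analogous expansion of $|||f|||_{S_2,S_1}^4$ (peeling off $S_1$, using ergodicity of $S_2$) yields $|||f|||_{S_1,S_1}^4$, so all three agree and $(\star_2)$ holds.

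For the inductive step, fix $d\ge 3$ and assume $(\star_{d-1})$. Given commuting ergodic $S_1,\dots,S_d$, the recursive formula peels off $S_d$: $|||f|||_{S_1,\dots,S_d}^{2^d}=\lim_{N}\frac{1}{|I_N|}\sum_{n\in I_N}|||S_d^{n}f\cdot\bar f|||_{S_1,\dots,S_{d-1}}^{2^{d-1}}$. The decisive step is to rewrite the inner seminorm, taken with respect to the $(d-1)$-tuple $(S_1,\dots,S_{d-1})$ — which does \emph{not} contain $S_d$ — as the Host--Kra seminorm $|||\cdot|||_{S_d,\dots,S_d}$. This follows from $(\star_{d-1})$ applied to two $(d-1)$-tuples: applied to $(S_1,\dots,S_{d-1})$ it gives $|||g|||_{S_1,\dots,S_{d-1}}=|||g|||_{S_1,\dots,S_1}$, while applied to the auxiliary tuple $(S_d,S_1,S_2,\dots,S_{d-2})$ — which has length $d-1\ge 2$, consists of commuting ergodic maps, and contains both $S_d$ and $S_1$ — it gives $|||g|||_{S_d,S_1,\dots,S_{d-2}}=|||g|||_{S_1,\dots,S_1}$ as well as $|||g|||_{S_d,S_1,\dots,S_{d-2}}=|||g|||_{S_d,\dots,S_d}$, so chaining these equalities yields $|||g|||_{S_1,\dots,S_{d-1}}=|||g|||_{S_d,\dots,S_d}$ for every $g\in L^\infty(\mu)$. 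Setting $g=S_d^{n}f\cdot\bar f$ and invoking the recursive formula once more, now for the single transformation $S_d$, gives $|||f|||_{S_1,\dots,S_d}^{2^d}=\lim_{N}\frac{1}{|I_N|}\sum_{n\in I_N}|||S_d^{n}f\cdot\bar f|||_{S_d,\dots,S_d}^{2^{d-1}}=|||f|||_{S_d,\dots,S_d}^{2^d}$. Since Theorem \ref{permuting} lets one move any $S_i$ into the last slot without changing $|||f|||_{S_1,\dots,S_d}$, rerunning the argument on the reordered tuple gives $|||f|||_{S_1,\dots,S_d}=|||f|||_{S_i,\dots,S_i}$ for every $i$, which is $(\star_d)$ and closes the induction.

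Conceptually, the mechanism is that for commuting ergodic transformations the box measure $\mu_{S_1,\dots,S_d}$ coincides with the Host--Kra measure $\mu^{[d]}$ of any one of the $S_i$: it is assembled by iterating relatively independent self-joinings, and ergodicity makes the bottom level equal to $\mu\times\mu$ regardless of the transformation chosen, after which the constructions run in parallel. The induction above is simply the way to track this coincidence level by level. I expect the main obstacle to be precisely this bookkeeping: the recursion only ever peels off \emph{one} transformation, so to identify the inner $(d-1)$-fold box seminorm with the Host--Kra seminorm of $S_d$ — a transformation \emph{absent} from the relevant $(d-1)$-tuple — one is forced through a mixed auxiliary tuple together with the permutation invariance of Theorem \ref{permuting}. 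This is also exactly where the hypothesis that \emph{every} $T_i$ is ergodic (and not merely some of them) is used, since all of the auxiliary tuples produced along the way must themselves consist of ergodic transformations.
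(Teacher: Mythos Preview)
Your argument is correct, and it takes a genuinely different route from the paper's. The paper works at the level of the box \emph{measures}: it shows directly that $\mu_{T_1,\dots,T_d}=\mu_{T_1,\dots,T_{d-1},T_1}$ by invoking a structural result (Proposition~29, Chapter~8 of \cite{hostkrabook}) to the effect that, under ergodicity, the invariant $\sigma$-algebras $\mathcal{I}(T_d^{[d-1]})$ and $\mathcal{I}(T_1^{[d-1]})$ agree $\mu_{T_1,\dots,T_{d-1}}$-a.e., so the relatively independent joinings over them coincide; iterating replaces each $T_j$ by $T_1$. Your proof, by contrast, stays entirely at the level of the seminorms: you use only the recursive formula and Theorem~\ref{permuting}, together with the elementary observation that $|||g|||_S^2=\bigl|\int g\,d\mu\bigr|^2$ for any ergodic $S$. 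The price is the auxiliary-tuple bookkeeping (applying $(\star_{d-1})$ to $(S_d,S_1,\dots,S_{d-2})$ to bridge $|||\cdot|||_{S_1,\dots,S_1}$ and $|||\cdot|||_{S_d,\dots,S_d}$), but the payoff is a more self-contained argument that avoids the external citation on invariant $\sigma$-algebras of diagonal actions. The paper's approach yields the slightly stronger conclusion that the box measures themselves coincide, which you only obtain implicitly.
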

\begin{proof}
	First, notice that by Theorem \ref{permuting}, it suffices to show this result in the case where $i=1$, and then use symmetry, so let $i=1$. Next, to show \eqref{seminormequality1} it suffices to prove that $\mu_{T_1,\dots,T_d}=\mu_{T_1,\dots,T_1}$. Recall that the measures $\mu_{T_1,\dots,T_d}$ are defined inductively so that
	\[ \mu_{T_1,\dots,T_d}=\mu_{T_1,\dots,T_{d-1}} \times_{\mathcal{I}(T_d^{[d-1]})} \mu_{T_1,\dots,T_{d-1}},\]
	(see the subsection on the construction of $\mu^*$) so we can change $\mathcal{I}(T_d \times \dots \times T_d)$ for $\mathcal{I}(T_1 \times \dots \times T_1)$ (see Proposition 29 of Chapter 8 in \cite{hostkrabook}), so we get
	\[ \mu_{T_1,\dots,T_d}=\mu_{T_1,\dots,T_{d-1},T_1}.\]
	From this, it follows that $|||f|||_{T_1,\dots,T_d}=|||f|||_{T_1,T_1,\dots,T_{d-1}}$.
	\\ \\
	Repeating this argument using our ergodicity assumptions, we are allowed to change all the other transformations for $T_1$, which implies the result.
\end{proof}
Next, we give an essential theorem that characterizes the $\mathcal{Z}_k$ factors.
\begin{definition} We say that a measure preserving system $(X,\mathcal{B},\mu,T_1,\dots,T_d)$ is \emph{toral of order} $k$ if it is isomorphic to $(G/\Gamma,\textrm{Borel}(G/\Gamma),\mu_{\textrm{Haar}},T_{a_1},\dots,T_{a_d})$, where $G$ is a $k$-step nilpotent Lie group, $\Gamma$ a cocompact subgroup, and the transformations $T_{a_1},\dots,T_{a_d}$ act by ergodic niltranslations by commuting elements $a_1,\dots a_d \in G$ on $G$.
\end{definition}
\begin{definition}
	We say that a measure preserving system $(X,\mathcal{B},\mu,T_1,\dots,T_d)$ is \emph{of order} $k$ if $X=Z_k(X)$ and each $T_i$ is ergodic. (Note that the notion $Z_k(X)$ is independent of the transformations $T_i$ because of the ergodicity assumptions made on $T_i$.)
	\end{definition}
\begin{theorem}[Frantzikinakis-Kra \cite{frakra}]\label{frak}
	Let $(X,\mathcal{B},\mu,T_1,\dots,T_d)$ be a measure preserving system of order $k$. Then, the system is an inverse limit of a sequence $\{(X_i,\mathcal{B}_i,\mu_i,T_1,\dots,T_d)\}_{i \in \N}$ of toral systems of order $k$. Moreover, these toral systems of order $k$ are isomorphic to a $k$-step nilsystem $(G/\Gamma,m_{G/\Gamma},T_{a_1},\dots,T_{a_d})$.
\end{theorem}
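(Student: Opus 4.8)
Since the result is attributed to Frantzikinakis--Kra, I sketch how I would recover it from the single-transformation Host--Kra structure theorem quoted above together with the box-seminorm calculus of Section~3. The plan is to treat one distinguished transformation, say $T_1$, via Host--Kra, and then to upgrade the resulting tower of $k$-step nilsystems to one that is simultaneously compatible with all of $T_1,\dots,T_d$, using that these commute and that each $(X,\mathcal B,\mu,T_j)$ is again an ergodic system of order $k$ ($Z_k(X)$ being independent of the chosen ergodic transformation, as noted after the definition of a system of order $k$; Lemma~\ref{lem2} is the relevant seminorm identity). Applying Host--Kra to $(X,\mathcal B,\mu,T_1)$, with $\mathcal B=\mathcal Z_k$, gives an increasing sequence of $T_1$-invariant sub-$\sigma$-algebras $\mathcal Y_1\subseteq\mathcal Y_2\subseteq\cdots$ with $\bigvee_i\mathcal Y_i=\mathcal B$ and each $(X,\mathcal Y_i,\mu,T_1)$ a $k$-step nilsystem on which $T_1$ is an ergodic niltranslation.

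Next I make the tower invariant under $\langle T_1,\dots,T_d\rangle$. Since the $T_j$ commute, conjugation by $T^{\vec m}:=T_1^{m_1}\cdots T_d^{m_d}$ is an automorphism of $(X,\mathcal B,\mu,T_1)$, so it carries $\mathcal Y_i$ to a $T_1$-invariant sub-$\sigma$-algebra $T^{\vec m}\mathcal Y_i$ whose factor is again a $k$-step nilsystem. A finite join of $k$-step nilsystem factors is isomorphic to a closed sub-nilmanifold of a finite product of $k$-step nilmanifolds, hence is again a $k$-step nilsystem factor; thus $\mathcal X_{i,n}:=\bigvee_{|\vec m|\le n}T^{\vec m}\mathcal Y_i$ is a $k$-step nilsystem factor, and $\mathcal X_i:=\bigvee_{\vec m\in\Z^d}T^{\vec m}\mathcal Y_i$ is invariant under every $T_j$ and is the inverse limit of the $\mathcal X_{i,n}$. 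As $\bigvee_i\mathcal X_i=\mathcal B$, the system $X$ is the inverse limit of the $\langle T_1,\dots,T_d\rangle$-invariant factors $(X,\mathcal X_{i,n},\mu,T_1,\dots,T_d)$, each a $k$-step nilmanifold on which $T_1$ is an ergodic niltranslation.

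The substantive step is to show that $T_2,\dots,T_d$ act on each $\mathcal X_{i,n}$ by niltranslations. After the standard reduction I may assume $(X,\mathcal X_{i,n},\mu)=G/\Gamma$ with $G$ connected and $T_1=T_a$ an ergodic niltranslation; then I would invoke the Host--Kra description of a system of order $k$ as a homogeneous space: it carries a canonical $k$-step nilpotent group $\mathcal G$ of measure preserving transformations acting transitively, with $G/\Gamma\cong\mathcal G/\Delta$ for a point stabilizer $\Delta$, and any measure preserving transformation commuting with $T_a$ for which the system is again of order $k$ --- which holds for each $T_j$ by hypothesis --- must lie in $\mathcal G$, i.e.\ be a niltranslation $T_{a_j}$. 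The elements $a_1,\dots,a_d$ commute as transformations of $G/\Gamma$; absorbing commutators (which lie in $\Delta$) yields genuinely commuting representatives, and since each $T_{a_j}$ is ergodic by hypothesis the factor is a toral system of order $k$. I expect this rigidity assertion --- commuting transformations of an order-$k$ nilsystem are niltranslations, together with the construction and naturality of $\mathcal G$ --- to be the main obstacle: it is the Host--Kra structure theory rerun in the presence of several commuting transformations, and one must also check that the connecting maps of the $T_1$-tower intertwine the resulting group presentations so that the inverse limit remains a valid presentation of $X$.

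Carrying out the previous step compatibly along the tower $(\mathcal X_{i,n})$ and passing to the inverse limit then shows that $X$ is an inverse limit of toral systems of order $k$, each isomorphic to a $k$-step nilsystem $(G/\Gamma,m_{G/\Gamma},T_{a_1},\dots,T_{a_d})$ with commuting ergodic niltranslations --- which is the claim.
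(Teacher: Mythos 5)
The paper offers no proof of Theorem \ref{frak}: it is quoted verbatim from Frantzikinakis--Kra \cite{frakra}, so your sketch can only be measured against that original argument (which itself leans on the Host--Kra machinery of systems of order $k$). Your overall strategy --- run the single-transformation Host--Kra theorem for $T_1$ and then upgrade via a rigidity statement that every measure preserving transformation commuting with $T_1$ on a system of order $k$ acts on the toral factors by niltranslations --- is indeed the route of \cite{frakra}. But as a proof your text has genuine gaps, and the one you yourself flag as ``the main obstacle'' is precisely the content of the theorem: the assertion that any $S$ commuting with $T_1$ lies in the Host--Kra group $\mathcal{G}(X)$ of the order-$k$ system, and that for toral systems this group acts by niltranslations, is not a quotable black box but the heart of the Frantzikinakis--Kra/Host--Kra analysis; invoking it without argument proves nothing.

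There is also a concrete structural flaw in your construction of factors invariant under all of $T_1,\dots,T_d$. The finite joins $\mathcal{X}_{i,n}=\bigvee_{|\vec m|\le n}T^{\vec m}\mathcal{Y}_i$ are \emph{not} invariant under $T_2,\dots,T_d$: applying $T_j$ shifts the index box, so only the full join $\mathcal{X}_i$ is invariant, and that full join is merely a pro-nilsystem, not a $k$-step nilsystem. Hence you have not exhibited an inverse limit of nilsystem factors of the joint action, which is what the theorem asserts. In \cite{frakra} the invariance is obtained canonically rather than by joining conjugated factors: the toral factors arise as quotients $X/K$ by compact subgroups $K$ normal in $\mathcal{G}(X)$; once one knows $S\in\mathcal{G}(X)$, $S$ normalizes $K$, descends to $X/K$, and acts there as a translation, so the whole tower is automatically invariant under every commuting transformation. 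Finally, your ``absorbing commutators'' step to produce genuinely commuting elements $a_1,\dots,a_d\in G$ is not automatic: commuting as transformations of $G/\Gamma$ only places the group commutator in the normal core of the stabilizer, and extracting commuting representatives (as required by the paper's definition of a toral system of order $k$) needs its own argument.
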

We note that the sequence of factors in Theorem \ref{frak} will be denoted by $Z_i(X)$ as the number of commuting transformations involved do not change them.
We end this section with a result of Johnson that will allow us to push Theorem \ref{intro1} for multicorrelation sequences arising from general families of polynomials:
\begin{theorem}[Johnson \cite{mcrjohnson}]\label{mcrj}
	Let $(X,\mathcal{B},\mu,T_1,\dots,T_d)$ be a measure preserving system. Assume that $T_1^{c_1}\cdot\dotso\cdot T_d^{c_d}$ is ergodic for all $(c_1,\dots,c_d) \neq \vec{0}$. Let $p_1,\dots,p_d: \Z \rightarrow \Z^d$ be polynomial functions such that $p_i,p_i-p_j$ are not constant for all $i$ and for all $j \neq i$. Let $f_1,\dots,f_d \in L^{\infty}(\mu)$. Then,
	\[ \lim_{N-M \to \infty} \frac{1}{N-M}\sum_{n=M}^{N-1}T_{p_1(n)}f_1\cdot \dotso \cdot T_{p_d(n)}f_d=0\]
	in $L^2(\mu)$, provided $|||f_i|||_{\underbrace{T_i,\dots,T_i}_{\ell}}=0$ for any $1 \leq i \leq d$, for some suitable $\ell \in \N$ depending only on the given family of polynomials.
\end{theorem}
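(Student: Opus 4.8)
The argument is a PET induction in the spirit of Bergelson and Bergelson--Leibman, carried out with the Host seminorms for commuting transformations in place of the ordinary Gowers--Host--Kra seminorms. Fix an index $i$ with $|||f_i|||_{\underbrace{T_i,\dots,T_i}_{\ell}}=0$; relabelling the functions $f_r$, the coordinates of the $\Z^d$-valued polynomials $p_r$, and the transformations $T_j$ simultaneously --- an operation that changes neither the average nor the ergodicity and non-degeneracy hypotheses --- we may assume $i=1$. To any finite family of essentially distinct, non-constant polynomials $\Z\to\Z^d$ equipped with a marked member one attaches the Bergelson--Leibman \emph{weight vector}, recording for each degree how many distinct leading behaviours of that degree occur among the polynomials and their pairwise differences; these vectors are well-ordered (lexicographically, from the top degree downwards), and the proof is an induction on this well-order applied to $\{p_1,\dots,p_d\}$ with $p_1$ marked.

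\emph{Base case: linear systems.} Suppose every $T_{p_r(n)}$ has the form $\prod_{j=1}^d T_j^{c_{r,j}n+e_{r,j}}$; after merging any two factors whose linear parts coincide we may take the vectors $c_r=(c_{r,j})_j$ pairwise distinct and nonzero. Absorbing the constants into the functions and writing $S_r=\prod_jT_j^{c_{r,j}}$, the average becomes $\frac1{N-M}\sum_{n=M}^{N-1}\prod_r S_r^{\,n}g_r$, where each $S_r$ and each $S_rS_{r'}^{-1}$ $(r\neq r')$ is a product $\prod_jT_j^{c_j}$ with $c\neq\vec{0}$, hence ergodic --- this is the one place the hypothesis that $T_1^{c_1}\cdots T_d^{c_d}$ is ergodic for all $(c_1,\dots,c_d)\neq\vec{0}$ enters. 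Regarding $S_1,\dots,S_k$ as a system of commuting transformations and applying Proposition~\ref{seminorm1} with the marked index placed first (for $k=1$ this is merely the mean ergodic theorem), the $L^2$-norm of this average is bounded by $|||g_1|||_{S_1,S_1S_2^{-1},\dots,S_1S_k^{-1}}$; by Lemma~\ref{lem2}, which allows the transformations in a multifold Host seminorm to be replaced by any single ergodic one, this equals $|||g_1|||_{\underbrace{T_1,\dots,T_1}_{k}}$. As $g_1$ is $f_1$ or $\bar f_1$ up to a constant shift, the average vanishes once $|||f_1|||_{\underbrace{T_1,\dots,T_1}_{k}}=0$.

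\emph{Inductive step.} If the system is not linear, set $u_n=\prod_rT_{p_r(n)}f_r$. Van der Corput's inequality reduces the claimed $L^2$-convergence of $\frac1{N-M}\sum_n u_n$ to $0$ to showing that $\frac1{N-M}\sum_n\langle u_{n+h},u_n\rangle_{L^2}\to0$ as $N-M\to\infty$ for all but finitely many $h$. Writing $\langle u_{n+h},u_n\rangle=\int_X u_{n+h}\bar u_n\,d\mu$, composing under the integral with the measure-preserving map $T_{p_{r_0}(n)}^{-1}$ for a pivot $r_0$ of least degree among the non-constant $p_r$ (chosen distinct from the marked index whenever the marked polynomial is linear), discarding the factors that have become constant in $n$ and merging essentially equal ones, one bounds $\bigl|\frac1{N-M}\sum_n\langle u_{n+h},u_n\rangle\bigr|$ by a bounded constant times the $L^2$-norm of a new multiple ergodic average attached to a family $\mathcal{P}_h$ built from $\{p_r(\cdot+h)-p_{r_0}:r\}\cup\{p_r-p_{r_0}:r\neq r_0\}$. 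By the standard PET lemma, for all but finitely many $h$ the weight of $\mathcal{P}_h$ is strictly smaller than that of $\{p_1,\dots,p_d\}$; moreover every transformation in $\mathcal{P}_h$ is again a product $\prod_jT_j^{c_j}$, so the ergodicity hypothesis is inherited, and the choice of pivot keeps a non-constant descendant of $f_1$ present in $\mathcal{P}_h$. Applying the induction hypothesis to each $\mathcal{P}_h$ completes the step. Since PET induction for a fixed family terminates after a number of van der Corput steps bounded in terms of $d$ and the maximal degree of the $p_r$, the number of functions in the terminal linear system --- hence an admissible value of $\ell$ --- is bounded in terms of the same data.

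The crux is the bookkeeping of this induction in the commuting, vector-valued setting: one must define the weight so that it genuinely drops at all but finitely many $h$, which requires analysing the degenerate cases in which leading coefficient vectors cancel under differencing and checking that the needed non-constancy persists in $\mathcal{P}_h$ (or else that the offending factor is constant in $n$ and can be pulled out), all the while tracking which descendant of the marked $f_1$ is being followed so that the seminorm hypothesis can be fed in at the base case. A secondary, routine point is propagating the $o_{N\to\infty}(1)$ error terms and the ``all but finitely many $h$'' restrictions uniformly through the finitely many levels of the recursion, respecting that the averages run over the F\o lner intervals $[M,N-1]$.
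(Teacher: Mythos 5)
First, a point of reference: the paper does not prove this statement at all --- it is quoted from Johnson \cite{mcrjohnson} and used as a black box in the proof of Theorem \ref{mainthmgen} --- so the only possible comparison is with Johnson's published argument, and your overall route (PET induction run against Host's seminorms for commuting transformations, with the hypothesis that every nontrivial product $T_1^{c_1}\cdot\dotso\cdot T_d^{c_d}$ is ergodic entering exactly where you place it, at the linear stage) is indeed the expected one.

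However, as written your induction does not close, and this is a genuine gap rather than bookkeeping. Your inductive statement has the form ``if the marked function has vanishing seminorm then the average vanishes,'' and in the inductive step you propose to ``apply the induction hypothesis to each $\mathcal{P}_h$.'' But after a van der Corput step the function attached to the marked polynomial is no longer $f_1$, nor ``$f_1$ or $\bar f_1$ up to a constant shift'' as your base case asserts: it is an $h$-dependent multiplicative derivative such as $f_1\cdot\overline{T_{q(h)}f_1}$, and nothing guarantees that its seminorm vanishes when $|||f_1|||_{\ell}=0$; the hypothesis you need for $\mathcal{P}_h$ is simply not available. The standard repair (and the one Johnson's argument embodies) is to prove a quantitative statement --- the limsup of the $L^2$-norm of the average is bounded by a power of a seminorm of the marked function --- and, in the inductive step, to average the bounds obtained for $f_1\cdot\overline{T_{q(h)}f_1}$ over $h$ and convert them into a seminorm of $f_1$ of one higher order via the recursive structure of the seminorms, i.e. an identity of the type $|||f|||_{k+1}^{2^{k+1}}=\lim_{N\to\infty}\frac{1}{N}\sum_{h=1}^{N}|||T^{h}f\cdot\bar f|||_{k}^{2^{k}}$, together with an argument (using the ergodicity hypotheses) to handle shifts $q(h)$ that are multiples of $h$ along products of several $T_j$. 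This conversion is precisely where the order $\ell$ accumulates the van der Corput steps; your sketch, which feeds the seminorm hypothesis in only at the linear base case, never produces a seminorm of order larger than the number of terms in the terminal linear system and therefore cannot deliver the stated conclusion. A smaller point: in the base case you invoke Lemma \ref{lem2} to replace $|||g_1|||_{S_1,S_1S_2^{-1},\dots,S_1S_k^{-1}}$ by $|||g_1|||_{T_1,\dots,T_1}$, but $T_1$ is generally not among the listed transformations, so you need the (true, but unstated and unproved) strengthening that any ergodic transformation commuting with the listed ones may be substituted.
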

\section{Removal of $\varepsilon$ under some ergodicity assumptions}
We now move to the proof of Theorem \ref{intro1} and its generalization to polynomial multicorrelation sequences, which we state here again for the convenience of the reader.
\begin{theorem}\label{mainthm}
	Let $d \in \N$ and $(X,\mathcal{B},\mu,T_1,\dots,T_d)$ a measure preserving system. Let $f_0,\dots,f_d \in L^{\infty}(\mu)$, and suppose that the transformations $T_1,\dots,T_d$ and $T_iT_j^{-1}$ are ergodic (for $1 \leq i\neq j \leq d$). Then, the multicorrelation sequence
	\[ a(n):=\int_X f_0\cdot T_1^nf_1 \cdot \dotso \cdot T_d^n f_d \ d\mu\]
	can be decomposed as a sum of a uniform limit of $d$-step nilsequences ($a_{\textrm{st}}$) and a nullsequence ($a_{\textrm{er}}$).
\end{theorem}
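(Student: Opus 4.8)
The plan is to follow the Bergelson--Host--Kra argument from \cite{bhk}, with the Host--Kra structure theorem replaced by the Frantzikinakis--Kra structure theorem for commuting transformations (Theorem \ref{frak}), and with the reduction to a characteristic factor organized around the seminorm estimates of the previous section (Proposition \ref{seminorm1}, Lemma \ref{lem1}, Lemma \ref{lem2}).

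\emph{Step 1: reduction to a single characteristic factor.} I would first prove that whenever $\mathbb{E}[f_i\mid \mathcal{Z}_d(T_i)]=0$ for some $1\le i\le d$ (equivalently $|||f_i|||_{\underbrace{T_i,\dots,T_i}_{d+1}}=0$), the sequence $a$ is a nullsequence. After normalizing $\|f_j\|_{L^\infty}\le 1$ for all $j$, one has the identity
\[\frac{1}{N-M}\sum_{n=M}^{N-1}|a(n)|^2=\int_{X\times X}\bigl(f_0\otimes\bar f_0\bigr)\cdot\Bigl(\frac{1}{N-M}\sum_{n=M}^{N-1}\prod_{j=1}^{d}(T_j\times T_j)^n(f_j\otimes\bar f_j)\Bigr)\,d(\mu\times\mu).\]
Applying Cauchy--Schwarz and then Proposition \ref{seminorm1} in the product system $(X\times X,\mu\times\mu,T_1\times T_1,\dots,T_d\times T_d)$ with the change of variables that puts $T_i\times T_i$ in the first slot, the auxiliary transformations become $(T_iT_m^{-1})\times(T_iT_m^{-1})$, $m\ne i$, and the $\limsup$ of the average is bounded by the box seminorm of $f_i\otimes\bar f_i$ with respect to $T_i\times T_i$ and the $(T_iT_m^{-1})\times(T_iT_m^{-1})$. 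Writing each of these as $U\times U$, Lemma \ref{lem1} bounds this by $|||f_i|||_{T_i,(T_iT_m^{-1})_{m\ne i},T_i}^2$, a box seminorm over $X$ with $d+1$ commuting transformations that are all ergodic by hypothesis; hence Lemma \ref{lem2} collapses it to $|||f_i|||_{\underbrace{T_i,\dots,T_i}_{d+1}}^2=0$, proving the claim. Replacing $f_1,\dots,f_d$ one at a time by $\mathbb{E}[f_j\mid \mathcal{Z}_d(T_j)]$, and then $f_0$ as well, changes $a$ only by a nullsequence. Finally, by Theorem \ref{permuting} and Lemma \ref{lem2} the box seminorm $|||f|||_{\underbrace{T_i,\dots,T_i}_{d+1}}$ is independent of $i$, so the factors $\mathcal{Z}_d(T_i)$ all coincide in a common factor $\mathcal{Z}_d$ which is invariant under every $T_j$; thus we may assume all $f_i$ are $\mathcal{Z}_d$-measurable, i.e.\ we work inside the factor $(Z_d(X),\mu,T_1,\dots,T_d)$, a system of order $d$.

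\emph{Step 2: structure, approximation, and the nilsystem correlation.} By Theorem \ref{frak}, $(Z_d(X),\mu,T_1,\dots,T_d)$ is an inverse limit of toral systems of order $d$, each isomorphic to a $d$-step nilsystem $(G/\Gamma,m_{G/\Gamma},T_{b_1},\dots,T_{b_d})$ with commuting $b_1,\dots,b_d\in G$. Given $\varepsilon>0$, approximate each $f_i$ in $L^2(\mu)$ within $\varepsilon$ by a function lifted from one such finite-level factor, and then by a continuous function $\phi_i$ on the corresponding nilmanifold $Y=G/\Gamma$ with $\|\phi_i\|_{L^\infty}$ controlled; a telescoping estimate gives $\sup_n|a(n)-b(n)|\le C\varepsilon$ with $b(n)=\int_Y \phi_0(y)\,\phi_1(b_1^ny)\cdots\phi_d(b_d^ny)\,dm_Y(y)$ and $C$ depending only on the $\|f_i\|_{L^\infty}$. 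Then, exactly as in \cite{bhk}, consider the $d$-step nilmanifold $\widehat{Y}=(G/\Gamma)^{d+1}$, the element $c=(e,b_1,\dots,b_d)\in G^{d+1}$, the diagonal $\Delta\colon Y\to\widehat{Y}$, and the orbit closure $W=\overline{\bigcup_{n\in\Z}c^n\Delta(Y)}$: since $W$ lies in the $d$-step group $G^{d+1}$ and subgroups of $d$-step nilpotent groups are $d$-step nilpotent, $W$ is again a nilmanifold of step $\le d$, and the integral of $\phi_0\otimes\cdots\otimes\phi_d$ over the translate $c^n\Delta(Y)$ equals $\Psi(c^n w_0)$ for a continuous $\Psi$ on $W$ and a base point $w_0$. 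Hence $b(n)$ is a $d$-step nilsequence. Taking $\varepsilon=1/k\to 0$ gives $d$-step nilsequences $b_k$ with $\sup_n|a'(n)-b_k(n)|\to0$, where $a'(n)=\int f_0'\cdot T_1^nf_1'\cdots T_d^nf_d'\,d\mu$ with $f_i'=\mathbb{E}[f_i\mid\mathcal{Z}_d]$; so $a_{\mathrm{st}}:=a'$ is a uniform limit of $d$-step nilsequences, and $a_{\mathrm{er}}:=a-a'$ is a nullsequence by Step 1.

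The main obstacle is Step 1: one needs a single characteristic factor that behaves well for all the $T_i$ at once, and this is exactly where the hypotheses on the $T_iT_j^{-1}$ are used. Proposition \ref{seminorm1} naturally produces a commuting-transformation box seminorm built from $T_i$ and the $T_iT_m^{-1}$; it is the ergodicity of every $T_iT_m^{-1}$, combined with Lemmas \ref{lem1} and \ref{lem2}, that lets one replace it by the ordinary Gowers--Host--Kra seminorm $|||f_i|||_{\underbrace{T_i,\dots,T_i}_{d+1}}$ of the single transformation $T_i$, hence land precisely on the $d$-step factor $\mathcal{Z}_d$ to which Theorem \ref{frak} applies. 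Step 2 then follows \cite{bhk} essentially verbatim; the only point requiring care there is that the approximations be uniform in $n$, so that one genuinely obtains a uniform limit of $d$-step nilsequences.
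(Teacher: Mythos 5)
Your Step 1 is essentially the paper's argument: Cauchy--Schwarz on the product system, Proposition \ref{seminorm1} with the change of variables putting $T_i\times T_i$ first, then Lemmas \ref{lem1} and \ref{lem2} to collapse the box seminorm to $|||f_i|||_{\underbrace{T_i,\dots,T_i}_{d+1}}$, so that $Z_d(X)$ is characteristic and Theorem \ref{frak} applies. That part is fine.

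The gap is in Step 2, at the sentence ``the integral of $\phi_0\otimes\cdots\otimes\phi_d$ over the translate $c^n\Delta(Y)$ equals $\Psi(c^nw_0)$ for a continuous $\Psi$ on $W$, hence $b(n)$ is a $d$-step nilsequence.'' This is exactly the delicate point, and ``exactly as in \cite{bhk}'' does not cover it: the Bergelson--Host--Kra nilsystem argument is carried out for the iterates $a,a^2,\dots,a^d$ of a single ergodic niltranslation, and transferring it to $d$ distinct commuting translations $b_1,\dots,b_d$ requires justifying both that the translates $c^n\Delta(Y)$ equidistribute in their orbit closure and that the assignment of a translate to the integral against its Haar measure descends to a continuous function on a genuine nilmanifold (the naive function $g\mapsto\int_{\Delta(Y)}f(gu)\,dm_{\Delta(Y)}(u)$ is not right-$\Gamma^{d+1}$-invariant, so it does not automatically define $\Psi$ on $W$). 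In general, for translate-integrals along (polynomial, in particular linear) sequences one only gets a nilsequence \emph{plus a nullsequence}; this is precisely why the paper invokes Leibman's Theorem 2.5 in \cite{leibman1} (applied to $\tilde X^{d+1}$, the diagonal subnilmanifold and the sequence $(a_1^n,\dots,a_d^n)$) instead of the exact claim. Once that extra nullsequence is present, your final step ``take $a_{\mathrm{st}}:=a'$'' no longer works: for each $\varepsilon$ you only get $a=\psi+\lambda+\delta$ with $\psi$ a $d$-step nilsequence, $\lambda$ a nullsequence and $\|\delta\|_{\ell^\infty}\le\varepsilon$, and you must still show these assemble into a single uniform limit of nilsequences plus a single nullsequence. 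The paper does this by comparing the decompositions for $\varepsilon=1/l$: since $\psi_l-\psi_r$ is a nilsequence which is at most $\frac1l+\frac1r$ outside a set of zero Banach density, it is at most $\frac1l+\frac1r$ everywhere, so $(\psi_l)$ is Cauchy in $\ell^\infty(\Z)$. Your proposal needs either a proof of the exact-nilsequence claim for commuting ergodic niltranslations or this final limiting argument; as written it has neither.
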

\begin{proof} We follow and adapt the proof strategy in Section $3$ of Leibman's paper \cite{leibman1}. Without loss of generality, we assume that $||f_i||_{\infty} \leq 1$ for all $i$. Let $\Phi_N$ be a F{\o}lner sequence in $\Z$. Then, by Proposition \ref{seminorm1}, the Cauchy-Schwarz inequality, and the fact that $||f_0||_{\infty} \leq 1$, we have 
	\[ \lim_{N \to \infty} \frac{1}{|\Phi_N|} \sum_{n \in \Phi_N} \left| \int_X f_0 \cdot T_1^nf_1 \cdot \dotso \cdot T_d^nf_d \ d\mu \right|^2=\]
	\begin{multline}\label{bound1}\lim_{N \to \infty} \frac{1}{|\Phi_N|} \sum_{n \in \Phi_N} \int_{X^2} f_0 \otimes \bar{f_0} \cdot (T_1 \times T_1)^n (f_1 \otimes \bar{f_1})\cdot \dotso \cdot (T_d \times T_d)^n (f_d \otimes \bar{f_d}) \ d\mu \ d\mu \leq \\
	\lim_{N \to \infty} \left|\left|\frac{1}{|\Phi_N|}\sum_{n \in \Phi_N}(T_1 \times T_1)^n (f_1 \otimes \bar{f_1})\cdot \dotso \cdot (T_d \times T_d)^n (f_d \otimes \bar{f_d})\right|\right|_{L^2(\mu)} \leq
	|||f_i \otimes \bar{f_i}|||_{T_i \times T_i,((T_i \times T_i)^{-1}(T_j \times T_j))_{i \neq j}}, \end{multline}
	for all $1 \leq i \leq d$. Thus, given our ergodicity assumptions, and using Lemma \ref{lem1} and \ref{lem2}, we can bound from above the seminorm appearing in \eqref{bound1} by 
	\[ |||f_i|||_{T_i,T_i,(T_i^{-1}T_j)_{i\neq j}}^2=|||f_i|||_{T_i,\dots,T_i}^2=|||f_i|||_{d+1,T_i}^2.\]
	Therefore, for $1 \leq i\leq d$ we have
	\begin{equation}\label{bound2}
		\lim_{N \to \infty} \frac{1}{|\Phi_N|} \sum_{n \in \Phi_N} \int_{X^2} f_0 \otimes \bar{f_0} \cdot (T_1 \times T_1)^n (f_1 \otimes \bar{f_1})\cdot \dotso \cdot (T_d \times T_d)^n (f_d \otimes \bar{f_d}) \ d\mu \ d\mu \leq |||f_i|||_{d+1,T_i}^2
	\end{equation}
	The bound \eqref{bound2} and Theorem \ref{frak} imply that the sequence
	\begin{equation}\label{multinull}
		a(n)-\int_{Z_{d}(X)} f_0 \cdot T_1^n \E[f_1 \mid Z_{d}(X)] \cdot \dotso \cdot T_d^n\E[f_k \mid Z_{d}(X)] \ d\mu_{Z_{d}(X)} 
	\end{equation}
	is a null-sequence.
	\\ \\
	Let $\varepsilon>0$. By Theorem \ref{frak}, $Z_d(X)=(X_d,\textrm{Borel}(X_d),\mu_{X_d},T_1,\dots,T_d)$ is an inverse limit of nilsystems. Thus, there exists a factor of $Z_d(X)$ with the structure of a $d$-step nilsystem $(\tilde{X},\textrm{Borel}(\tilde{X}),\mu_{\tilde{X}},T_1,\dots,T_d)$, on which each $T_i$ acts by the niltranslation by an element $a_i \in \tilde{X}$, such that for $\tilde{f_i}=\E[f_i \mid \tilde{X}]$ we have
	\[ \left|\int_{X_d} f_0 \cdot T_1^n \E[f_1 \mid Z_{d}(X)] \cdot \dotso \cdot T_d^n\E[f_d \mid Z_{d}(X)] \ d\mu_{X_d}-\int_{\tilde{X}} \tilde{f_0} \cdot a_1^n\tilde{f_1} \cdot \dotso \cdot a_d^n \tilde{f_k} \ d\mu_{\tilde{X}} \right|<\varepsilon\]
	for all $n \in \Z$. Therefore, there exists a nullsequence $\lambda$ such that 
	\begin{equation}\label{ellinfty1}
		\left|\left|a(n)-\left(\int_{\tilde{X}} \tilde{f_0} \cdot a_1^n\tilde{f_1} \cdot \dotso \cdot a_d^n \tilde{f_k} \ d\mu_{\tilde{X}}+\lambda(n)\right)\right|\right|_{\ell^{\infty}(\Z)}<\varepsilon.
	\end{equation}
	A standard approximation argument allows us to assume without loss of generality that $\tilde{f_1},\dots,\tilde{f_d} \in C(\tilde{X})$ in \eqref{ellinfty1}. Applying Theorem 2.5 in \cite{leibman1} to the nilmanifold $\tilde{X}^k$, the diagonal subnilmanifold $\{(x,\dots,x) : x \in \tilde{X}\}$, the linear polynomial sequence $(a_1^n,\dots,a_d^n)$ and the function $f(x_1,\dots,x_d)=\tilde{f_1}(x_1)\cdot\dotso \cdot \tilde{f_d}(x_d) \in C(\tilde{X}^k)$, we obtain that the sequence
	\[\int_{\tilde{X}} \tilde{f_0} \cdot a_1^n\tilde{f_1} \cdot \dotso \cdot a_d^n \tilde{f_k} \ d\mu_{\tilde{X}} \]
	is a sum of a $d$-step nilsequence and a nullsequence.
	\\ \\
	Therefore, for each $\varepsilon>0$ we can find a $d$-step nilsequence $\psi$, a nullsequence $\lambda$ and a bounded sequence $\delta$ with $||\delta||_{\ell^{\infty}(\Z)} \leq \varepsilon$ such that
	\begin{equation}\label{decomp1} a(n)=\psi(n)+\lambda(n)+\delta(n).\end{equation}
	For each $l \in \N$, consider the decomposition $a=\psi_l+\lambda_l+\delta_l$, where $||\delta_l||_{\ell^{\infty}}<\frac{1}{l}$. For $r \neq l$, we have
	\begin{equation}\label{nilsequence1} |\psi_l(n)-\psi_r(n)|=|(\lambda_l(n)-\lambda_r(n))+(\delta_l(n)-\delta_r(n))|.
	\end{equation}
	Now, $\lim_{N-M \to \infty} \frac{1}{N-M}\sum_{n=M}^N |\lambda_l(n)-\lambda_r(n)|=0$ and $\sup_{n \in \Z} |\delta_r(n)-\delta_l(n)| \leq \frac{1}{l}+\frac{1}{r}$. Therefore,
	\begin{equation}\label{nilsequence2}
		|\psi_l(n)-\psi_r(n)|\leq \frac{1}{l}+\frac{1}{r}
	\end{equation}
	for all $n \in \Z$ except potentially a subset $A \subseteq \Z$ with $\mathbb{1}_A(n)$ a nullsequence. For each $l, r \in \N$, the sequence $\psi_l(n)-\psi_r(n)$ is a nilsequence, so it follows that inequality \eqref{nilsequence2} must, in fact, hold for all $n \in \Z$. Hence, the sequence $(\psi_l)_{l \in \N}$ is a Cauchy sequence in $\ell^{\infty}(\Z)$ that consists of $d$-step nilsequences, and since we already showed that $(\delta_r)_{r \in \N}$ is a Cauchy sequence in $\ell^{\infty}(\Z)$ converging to a nullsequence, we are done.
\end{proof}
The following theorem extends Theorem \ref{mainthm} to more general polynomial multicorrelation sequences, at the cost of more stringent ergodicity assumptions on the measure preserving system:
\begin{theorem}\label{mainthmgen}
	Let $(X,\mathcal{B},\mu,T_1,\dots,T_d)$ be a measure preserving system. Assume that the measure preserving transformations $T_1^{a_1}\cdot\dotso\cdot T_d^{a_d}$ are ergodic for all $(a_1,\dots,a_d) \neq \vec{0}$. Let $p_1,\dots,p_k: \Z \rightarrow \Z^d$ be polynomial functions such that $p_i,p_i-p_j$ are not constant for all $i$ and for all $j \neq i$. Let $f_0, f_1,\dots,f_k \in L^{\infty}(\mu)$. Then the sequence 
	\[ a(n):=\int_X f_0\cdot T_{p_1(n)}f_1\cdot \dotso \cdot T_{p_k(n)}f_k \ d\mu\]
	is a sum of a uniform limit of $\ell=\ell(d,p_1,\dots,p_k)$-step nilsequences ($a_{\textrm{st}}$) and a nullsequence $(a_{\textrm{er}}$).
\end{theorem}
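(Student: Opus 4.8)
The plan is to adapt the proof of Theorem \ref{mainthm} almost line by line, replacing its three linear inputs by polynomial ones: Host's bound (Proposition \ref{seminorm1}) by the control of polynomial multiple ergodic averages through iterated van der Corput / PET induction (as in \cite{bl1,wmpet}, and as it is carried out inside the proof of Theorem \ref{mcrj} in \cite{mcrjohnson}); the application of Theorem~2.5 of \cite{leibman1} to the \emph{linear} orbit $(a_1^n,\dots,a_d^n)$ by its application to a \emph{polynomial} orbit; and Theorem \ref{frak} without any change. Throughout I may assume $\|f_i\|_\infty\le 1$ for all $i$ and fix a F\o lner sequence $(\Phi_N)$ in $\Z$.

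First, I would establish a seminorm estimate. Expanding the square and passing to $X^2$ gives
\[ \frac{1}{|\Phi_N|}\sum_{n\in\Phi_N}|a(n)|^2=\int_{X^2}\big(f_0\otimes\bar f_0\big)\cdot\frac{1}{|\Phi_N|}\sum_{n\in\Phi_N}\Big(\prod_{r=1}^k\prod_{i=1}^d(T_i\times T_i)^{p_{r,i}(n)}\Big)\big(f_r\otimes\bar f_r\big)\ d(\mu\times\mu). \]
The inner average is a polynomial multiple ergodic average in the commuting --- but in general \emph{not} ergodic --- system $(X^2,\mu\times\mu,T_1\times T_1,\dots,T_d\times T_d)$, so Theorem \ref{mcrj} cannot be invoked there directly; instead, iterated van der Corput / PET induction bounds its $\limsup$ in $L^2(\mu\times\mu)$, after permuting the transformations in the resulting box seminorm (Theorem \ref{permuting}), by a constant multiple of $|||f_{r_0}\otimes\bar f_{r_0}|||_{S_1\times S_1,\dots,S_{\ell_0}\times S_{\ell_0}}$ for each $1\le r_0\le k$, where $\ell_0=\ell_0(d,p_1,\dots,p_k)$ records the number of van der Corput steps and each $S_m$ is of the form $T_1^{c_1}\cdots T_d^{c_d}$. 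By Lemma \ref{lem1} this is at most $|||f_{r_0}|||_{S_1,\dots,S_{\ell_0},S_{r_0}}^2$, a box seminorm on $X$ relative to $\ell_1:=\ell_0+1$ transformations; since by hypothesis every $T_1^{a_1}\cdots T_d^{a_d}$ with $(a_1,\dots,a_d)\neq\vec 0$ is ergodic, all of these transformations are ergodic, so Lemma \ref{lem2} --- together with the fact that under our hypotheses the Host--Kra factors do not depend on which ergodic transformation $T_1^{a_1}\cdots T_d^{a_d}$ is used to build them --- collapses the bound to $|||f_{r_0}|||_{\ell_1,T_{r_0}}^2$. The same estimate applies to any polynomial multicorrelation sequence $n\mapsto\int_X g_0\cdot\prod_r T_{p_r(n)}g_r\,d\mu$ with $g_i\in L^\infty(\mu)$, up to a constant depending on $\max_i\|g_i\|_\infty$.

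Next, I would peel off the characteristic factor and pass to a nilsystem. Writing $\prod_r T_{p_r(n)}f_r-\prod_r T_{p_r(n)}\E[f_r\mid Z_{\ell_1-1}(X)]$ as a telescoping sum of $k$ terms, each carrying exactly one factor $f_j-\E[f_j\mid Z_{\ell_1-1}(X)]$ (whose seminorm $|||\cdot|||_{\ell_1,T_j}$ vanishes by the defining property of $\mathcal Z_{\ell_1-1}$), and applying the estimate of the previous paragraph with $r_0=j$ to each term, one obtains that
\[ a(n)-\int_{Z_{\ell_1-1}(X)} f_0\cdot\prod_{r=1}^k T_{p_r(n)}\E[f_r\mid Z_{\ell_1-1}(X)]\ d\mu_{Z_{\ell_1-1}(X)} \]
is a nullsequence (this recovers and refines the characteristic-factor statement implied by Theorem \ref{mcrj}). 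By Theorem \ref{frak}, $Z_{\ell_1-1}(X)$ is an inverse limit of $(\ell_1-1)$-step nilsystems, so for each $\varepsilon>0$ I would choose a nilsystem factor $(\tilde X,\mathrm{Borel}(\tilde X),\mu_{\tilde X},T_1,\dots,T_d)$ on which every $T_i$ acts as an ergodic niltranslation by an element $a_i$ of a common nilpotent group with the $a_i$ commuting, so fine that replacing each $\E[f_r\mid Z_{\ell_1-1}(X)]$ by $\tilde f_r:=\E[f_r\mid\tilde X]$ changes the integral above by less than $\varepsilon$ uniformly in $n\in\Z$; a further approximation lets me take $\tilde f_1,\dots,\tilde f_k\in C(\tilde X)$. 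Then, applying Theorem~2.5 of \cite{leibman1} on $\tilde X^{\,k}$, with the diagonal subnilmanifold $\{(x,\dots,x):x\in\tilde X\}$, the polynomial orbit $n\mapsto\big(\prod_i a_i^{p_{1,i}(n)},\dots,\prod_i a_i^{p_{k,i}(n)}\big)$, and the function $F(x_1,\dots,x_k)=\tilde f_1(x_1)\cdots\tilde f_k(x_k)\in C(\tilde X^{\,k})$, one sees that $n\mapsto\int_{\tilde X}\tilde f_0\cdot\prod_r T_{p_r(n)}\tilde f_r\,d\mu_{\tilde X}$ is the sum of an $\ell$-step nilsequence and a nullsequence, where $\ell=\ell(d,p_1,\dots,p_k)$ is $\ell_1-1$ augmented by the step bound in Leibman's theorem (a function of $d$ and the degrees of the $p_r$). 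Hence for every $l\in\N$ we get $a=\psi_l+\lambda_l+\delta_l$ with $\psi_l$ an $\ell$-step nilsequence, $\lambda_l$ a nullsequence, and $\|\delta_l\|_{\ell^\infty(\Z)}<1/l$, and the Cauchy-sequence argument ending the proof of Theorem \ref{mainthm} (a bounded sequence agreeing with an $\ell$-step nilsequence off a set whose indicator is a nullsequence is itself an $\ell$-step nilsequence) shows that $(\psi_l)_{l\in\N}$ converges in $\ell^\infty(\Z)$ to a uniform limit $a_{\mathrm{st}}$ of $\ell$-step nilsequences, with $a_{\mathrm{er}}:=a-a_{\mathrm{st}}$ a nullsequence.

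The main obstacle will be the first step: carrying out the PET / van der Corput induction of \cite{bl1,wmpet} in the non-ergodic product system $X^2$ while controlling (i) the number $\ell_0$ of van der Corput iterations in terms of $d$ and the degrees of the $p_r$, and --- crucially --- (ii) the fact that every transformation arising as a van der Corput ``derivative'' is again of the form $T_1^{c_1}\cdots T_d^{c_d}$ and is therefore ergodic under our standing hypothesis, which is exactly what permits Lemma \ref{lem2} to collapse the box seminorm onto a single Host--Kra seminorm $|||\cdot|||_{\ell_1,T_{r_0}}$. Everything else is a faithful transcription of the proof of Theorem \ref{mainthm}, with Leibman's polynomial decomposition replacing its linear specialization.
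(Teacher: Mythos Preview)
Your proposal is correct and follows essentially the same route as the paper's own proof: bound the uniform average of $|a(n)|^2$ by a box seminorm on $X^2$, use Lemmas \ref{lem1} and \ref{lem2} to collapse it to a single Host--Kra seminorm on $X$, pass to $Z_{\ell}(X)$ and then to a nilsystem factor via Theorem \ref{frak}, apply Leibman's Theorem~2.5 with the polynomial orbit $(\prod_i a_i^{p_{1,i}(n)},\dots,\prod_i a_i^{p_{k,i}(n)})$ in place of the linear one, and finish with the identical Cauchy-sequence argument. If anything you are more careful than the paper at one point---the paper invokes Theorem \ref{mcrj} directly on $(X^2,\mu\times\mu,T_i\times T_i)$, where its ergodicity hypotheses are not satisfied, whereas you correctly isolate the underlying PET/van der Corput bound (which needs no ergodicity) before descending to $X$ via Lemma \ref{lem1} and only then using the ergodicity assumptions through Lemma \ref{lem2}; aside from a harmless indexing slip (your $S_{r_0}$, $T_{r_0}$ should carry a transformation index rather than the function index $r_0$), the argument is sound.
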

\begin{proof}
	We can argue similarly to the proof of Theorem \ref{mainthm}. First, we reduce to the case (without loss of generality) where $||f_i||_{\infty} \leq 1$ for all $i$. Then, observe that for any F\o lner sequence $(F_N)_{N \in \N}$ in $\Z$, we have 
	\begin{multline}\label{av} \lim_{N \to \infty}\frac{1}{|F_N|}\sum_{n \in F_N} |a(n)|^2=\lim_{N \to \infty}\frac{1}{|F_N|}\sum_{n \in F_N}\int_{X^2} f_0 \otimes \bar{f_0}\cdot (T_{p_1(n)} \times T_{p_1(n)})f_1 \otimes \bar{f_1}\cdot \dotso \cdot (T_{p_k(n)} \times T_{p_k(n)})f_k \otimes \bar{f_k}\ d\mu \ d\mu \\ \leq 
	\lim_{N \to \infty} \left|\left| \frac{1}{|\Phi_N|}\sum_{n \in \Phi_N}(T_{p_1(n)} \times T_{p_1(n)})f_1 \otimes \bar{f_1}\cdot \dotso \cdot (T_{p_k(n)} \times T_{p_k(n)})f_k \otimes \bar{f_k} \right|\right|_{L^2(\mu)}
	\end{multline}
	By Theorem \ref{mcrj}, the last limit in \eqref{av} is bounded by a seminorm of the form $|||f_i \otimes \bar{f_i}|||_{\underbrace{T_i \times T_i,\dots,T_i \times T_i}_{r(i)}} \leq |||f_i|||^2_{\underbrace{T_i,\dots,T_i}_{r(i)+1}}$, for some $r_i \in \N$, for all $i \in \N$, by Theorem \ref{mcrj}. Therefore, there is a common $\ell=\ell(d,p_1,\dots,p_k)$, big enough, such that if any of the norms $|||f_i|||_{\ell}=0$, then the averages \eqref{av} converge to $0$. (These seminorms are well defined because of our extra ergodicity assumptions).
	\\ \\
	This implies that we can proceed as in the proof of Theorem \ref{mainthm} but now using the factor $Z_{\ell}(X)$ instead of $Z_d(X)$. Let $\varepsilon>0$. There exists an $\ell$-step nilsystem $\tilde{X}$, a factor of $X$, on which the transformations $T_i$ act by niltranslations by commuting elements $a_i \in \tilde{X}$, there exist $\tilde{f_0},\tilde{f_1},\dots,\tilde{f_k} \in C(\tilde{X})$ and a nullsequence $\lambda \in \ell^{\infty}(\Z)$ such that
	\[ \left| a(n)-\left(\lambda(n)+\int_{\tilde{X}} \tilde{f_0}\cdot \vec{a}_{p_1(n)} \tilde{f}_1 \cdot \dotso \cdot  \vec{a}_{p_k(n)} \tilde{f}_k \ d\mu_{\tilde{X}} \right)\right|<\varepsilon,\]
	where $\vec{a}_{p_i(n)}=a_1^{p_{1,i}}(n)\cdot \dotso \cdot a_d^{p_{d,i}(n)}$, for all $n \in \Z$.
	\\ \\
	From this point, we proceed exactly as in the remainder of the proof of Theorem \ref{mainthm}, simply changing the polynomial sequence from $(a_1,\dots,a_d)^n$ to $(\vec{a}_{p_1(n)},\dots,\vec{a}_{p_k(n)})$, when applying Theorem 2.5 of \cite{leibman1}. We are done.
\end{proof}
\begin{remark} Using the results for $\Z^d$-actions in \cite{griesmer} and Theorem 0.3 in \cite{leibman2}, one can extend the proofs in Theorem \ref{mainthm} and Theorem \ref{mainthmgen} to polynomials of several variables. The proofs for this setup are essentially the same. 
\end{remark}
\section{Large intersections for two commuting transformations}
The purpose of this section is to prove Theorem \ref{intro2} and Theorem \ref{intro3}. 
\subsection{Large ``linear'' returns}
We begin with the following Theorem, which establishes a limit formula for averages of two commuting transformations with some ergodicity assumptions:
\begin{theorem}\label{twotranslimit}
	Let $(X,\mathcal{B},\mu,T,S)$ be a measure preserving system such that $T, S, TS^{-1}$ are ergodic. Let $Z$ be a compact abelian group such that the Kronecker factor of $X$, denoted by $Z_1(X)$,  is isomorphic to $(Z,\textrm{Borel}(Z),\mu_{\textrm{Haar}},T,S)$. Take $\alpha, \beta \in Z$ so that the map $R_{\alpha}z:=z+\alpha$ on $Z$ corresponds to the action of $T$ on $Z$, and the map $R_{\beta}z:=z+\beta$ corresponds to the action of $S$ on $Z$. Let $Y_{T,S}:=\overline{\{(n\alpha, n\beta) : n \in \Z\}}$, and denote by $\nu_{Y_{T,S}}$ the Haar probability measure on $Y$. Then, for any $f_1, f_2 \in L^{\infty}(\mu)$ and any F\o lner sequence $(F_N) \subseteq \Z$ we have
	\begin{equation}\label{tslimit}
		\lim_{N \to \infty} \frac{1}{|F_N|}\sum_{n \in F_N} T^nf_1S^nf_2=\int_{Y_{T,S}} \tilde{f_1}(z+u)\tilde{f_2}(z+v) \ d\nu_{Y_{T,S}}(u,v) \textrm{ (with respect to the } L^2(\mu)\textrm{-norm)},
	\end{equation}
	where $\tilde{f_i}=\mathbb{E}[f_i|Z_1(X)]$ is the conditional expectation of $f_i$ onto $Z_1(X)$.
\end{theorem}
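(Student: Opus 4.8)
The plan is to prove \eqref{tslimit} in two stages. In the first stage I would show that the Kronecker factor $Z_1(X)$ is \emph{characteristic} for the averages $\frac{1}{|F_N|}\sum_{n\in F_N}T^nf_1\,S^nf_2$, i.e.\ that the $L^2(\mu)$-limit is unchanged if each $f_i$ is replaced by $\tilde f_i=\mathbb{E}[f_i\mid Z_1(X)]$; in the second stage I would compute the resulting limit directly on the group rotation $(Z,\mathrm{Borel}(Z),\mu_{\mathrm{Haar}},R_\alpha,R_\beta)$ via equidistribution of $n\mapsto(n\alpha,n\beta)$ in its orbit closure $Y_{T,S}$.

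For the first stage, assume without loss of generality $\|f_i\|_\infty\le 1$. Applying Proposition~\ref{seminorm1} with $d=2$ and $(S_1,S_2)=(T,S)$ gives
\[ \limsup_{N\to\infty}\Bigl\|\tfrac{1}{|F_N|}\textstyle\sum_{n\in F_N}T^nf_1\,S^nf_2\Bigr\|_{L^2(\mu)}\le|||f_1|||_{T,\,TS^{-1}}, \]
and applying it instead with $(S_1,S_2)=(S,T)$ (noting $ST^{-1}=(TS^{-1})^{-1}$ is ergodic exactly when $TS^{-1}$ is, since a transformation and its inverse share the same invariant sets) gives the analogous bound by $|||f_2|||_{S,\,ST^{-1}}$. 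Since $T,S,TS^{-1}$, hence $ST^{-1}$, are ergodic, Lemma~\ref{lem2} collapses these seminorms to $|||f_1|||_{T,T}$ and $|||f_2|||_{S,S}$, which are exactly the second Gowers--Host--Kra seminorms of $f_1$ relative to $T$ and of $f_2$ relative to $S$; by the defining property of the $\mathcal Z_k$ factors these vanish precisely when the conditional expectation of $f_1$ onto the Kronecker factor of $(X,T)$, resp.\ of $f_2$ onto that of $(X,S)$, is zero. Because $T$ and $S$ are commuting and ergodic, every $T$-eigenvalue is simple and $S$ scales each $T$-eigenfunction (and symmetrically), so the Kronecker factors of $(X,T)$, of $(X,S)$, and the joint Kronecker factor $Z_1(X)$ all coincide. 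Writing $f_i=\tilde f_i+(f_i-\tilde f_i)$ and using bilinearity of the average then reduces everything to the case $f_i=\tilde f_i$, i.e.\ to functions on $Z$.

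For the second stage, on $Z$ the average is $\frac{1}{|F_N|}\sum_{n\in F_N}\tilde f_1(z+n\alpha)\,\tilde f_2(z+n\beta)$. The set $Y_{T,S}=\overline{\{(n\alpha,n\beta):n\in\Z\}}$ is a compact subgroup of $Z\times Z$, and $n\mapsto(n\alpha,n\beta)$ is a homomorphism $\Z\to Y_{T,S}$ with dense image; hence this sequence is equidistributed in $Y_{T,S}$ with respect to $\nu_{Y_{T,S}}$ along every F\o lner sequence in $\Z$. By Stone--Weierstrass this reduces to characters: for nontrivial $\chi\in\widehat{Y_{T,S}}$ one has $\chi(n\alpha,n\beta)=e^{2\pi in\theta}$ with $\theta\notin\Z$, and $\frac{1}{|F_N|}\sum_{n\in F_N}e^{2\pi in\theta}\to 0$ because multiplying this average by $e^{2\pi i\theta}-1$ yields the difference of the averages of $e^{2\pi in\theta}$ over $F_N+1$ and over $F_N$, which is $O(|F_N\Delta(F_N+1)|/|F_N|)=o(1)$. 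For $\tilde f_1,\tilde f_2\in C(Z)$ this gives, for each $z$ and uniformly in $z$ (by a compactness argument, since $z\mapsto\tilde f_1(z+\cdot)\tilde f_2(z+\cdot)$ is continuous into $C(Y_{T,S})$), convergence to $\int_{Y_{T,S}}\tilde f_1(z+u)\tilde f_2(z+v)\,d\nu_{Y_{T,S}}(u,v)$, and uniform convergence in $z$ yields $L^2(\mu)$-convergence. The extension from $C(Z)$ to arbitrary $\tilde f_i\in L^\infty(Z)$ is routine: with $\tilde f_2$ (resp.\ $\tilde f_1$) fixed, both sides of \eqref{tslimit} define bounded linear operators $L^2(Z)\to L^2(\mu)$ which agree on the dense subspace $C(Z)$.

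The main obstacle is the first stage: recognizing $Z_1(X)$ as the characteristic factor and, crucially, checking that the ``directional'' seminorms $|||f_1|||_{T,\,TS^{-1}}$ and $|||f_2|||_{S,\,ST^{-1}}$ that emerge from Proposition~\ref{seminorm1} really do reduce, via the ergodicity of $T,S,TS^{-1}$ and Lemma~\ref{lem2}, to ordinary Kronecker-factor obstructions — this is exactly where the hypotheses are spent. The equidistribution computation of the second stage is classical, and the identification of the various Kronecker factors is immediate from simplicity of eigenvalues for commuting ergodic transformations.
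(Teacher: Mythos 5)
Your proposal is correct and follows essentially the same route as the paper: Host's seminorm bound (Proposition~\ref{seminorm1}) combined with Lemma~\ref{lem2} shows the Kronecker factor is characteristic, and the limit is then evaluated on the orbit closure $Y_{T,S}$ via equidistribution of $(n\alpha,n\beta)$. The only differences are that you spell out details the paper leaves implicit --- the identification of the Kronecker factors of $(X,T)$, $(X,S)$ and the joint system via simplicity of eigenvalues, and a character/F\o lner argument plus density argument in place of the paper's direct appeal to the ergodic theorem on $Y_{T,S}$ --- and these are sound.
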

\begin{proof}
	Let $f_1, f_2 \in L^{\infty}(\mu)$ and $(F_N)$ be a F\o lner sequence in $\Z$. We consider the expression on the left hand side of \eqref{tslimit}. By Proposition \ref{seminorm1}, together with Lemma \ref{lem2}, we have
	\begin{equation}
		\lim_{N \to \infty} \left|\left|\frac{1}{|F_N|}\sum_{n \in F_N} T^nf_1S^nf_2 \right|\right|_{L^2(\mu)} \leq |||f_i|||_{T_i,T_i},
	\end{equation}
	for $i=1,2$, where we put $T_1=T, T_2=S$. In particular, the structure theory afforded by Theorem \ref{frak}
	 it is bounded by $\min \{|||f_1|||_2, |||f_2|||_2\}$ (the seminorms agree, independently of the chosen transformations). Thus, since $|||f_i|||_2=0$ if and only if $\mathbb{E}[f_i|Z_1(X)]=0$, it follows that
	\begin{equation}\label{l2equality1}
		\lim_{N \to \infty} \frac{1}{|F_N|}\sum_{n \in F_N} T^nf_1(x)S^nf_2(x)= \lim_{N \to \infty} \frac{1}{|F_N|}\sum_{n \in F_N} \tilde{f_1}(z+n\alpha)\tilde{f_2}(z+n\beta),
	\end{equation}
	with respect to the $L^2(\mu)$-norm, and where the averages on the right hand side of \eqref{l2equality1} take place in the factor $Z_1(X)$ (in other words, the map $x \mapsto z$ denotes the factor map from $X$ to $Z$). Now, by the ergodic theorem applied to the ergodic action $(z,w) \mapsto (z+\alpha,w+\beta)$ defined on $Y_{T,S}$ we get the following $L^2(\mu)$ limit formula:
	\[ \lim_{N \to \infty} \frac{1}{|F_N|}\sum_{n \in F_N} F(z_1+n\alpha,z_2+n\beta)=\int_{Y_{T,S}} F(u,v) \ d\nu_{Y_{T,S}}(u,v),\]
	for all $(z_1,z_2) \in Z^2$, and all $F \in L^2(Y_{T,S})$. Thus, \eqref{tslimit} follows by setting $F=\tilde{f_1} \otimes \tilde{f_2}$. 
\end{proof}
We will use Theorem \ref{twotranslimit} along with the notation introduced therein to show the next lemma:
\begin{lemma}\label{contfunc}
	Let $(X,\mathcal{B},\mu,T,S)$ be a measure preserving system with $T, S, TS^{-1}$ ergodic. Let $f_0, f_1, f_2 \in L^{\infty}(\mu)$ and $(F_N) \subseteq \Z$ be a F\o lner sequence. Then, for every continuous function $\eta: Y_{T,S} \rightarrow \C$ we have
	\begin{equation}
		\lim_{N \to \infty} \frac{1}{|F_N|}\sum_{n \in F_N} \eta(n\alpha, n\beta)\int_X f_0\cdot T^nf_1 \cdot S^nf_2\ d\mu=\int_Z\int_{Y_{T,S}} \eta(u,v)f_0(z)\tilde{f_1}(z+u)\tilde{f_2}(z+v) \ d \nu_{Y_{T,S}}(u,v) \ d\nu_Z(z),
	\end{equation}
	where $\tilde{f_i}(z)=\mathbb{E}[f_i|Z_1(X)](z)$ is the projection onto the Kronecker factor. 
\end{lemma}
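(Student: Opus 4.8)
The plan is to strip off the scalar weight $\eta(n\alpha,n\beta)$, cut the class of weights down to characters of $Y_{T,S}$, and then reduce to Theorem~\ref{twotranslimit}. First I would observe that, $\eta(n\alpha,n\beta)$ being a scalar,
\[ \frac{1}{|F_N|}\sum_{n \in F_N}\eta(n\alpha,n\beta)\int_X f_0\cdot T^nf_1\cdot S^nf_2\ d\mu=\int_X f_0\cdot\Big(\frac{1}{|F_N|}\sum_{n \in F_N}\eta(n\alpha,n\beta)\,T^nf_1\cdot S^nf_2\Big)\ d\mu, \]
so it suffices to prove that $\frac{1}{|F_N|}\sum_{n \in F_N}\eta(n\alpha,n\beta)\,T^nf_1\cdot S^nf_2$ converges in $L^2(\mu)$ to the function $x\mapsto\int_{Y_{T,S}}\eta(u,v)\tilde f_1(\pi(x)+u)\tilde f_2(\pi(x)+v)\,d\nu_{Y_{T,S}}(u,v)$, where $\pi\colon X\to Z$ is the factor map onto $Z_1(X)$. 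Pairing this $L^2$-limit against $f_0\in L^2(\mu)$ via Cauchy--Schwarz, and using that the limit is $Z_1(X)$-measurable (so the pairing only sees $\E[f_0\mid Z_1(X)]$) together with $\pi_*\mu=\nu_Z$, then yields exactly the right-hand side of the Lemma, reading ``$f_0(z)$'' there as $\E[f_0\mid Z_1(X)](z)$.

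Next I would reduce to characters. The orbit $\{(n\alpha,n\beta):n\in\Z\}$ is dense in the compact abelian group $Y_{T,S}$, so the values $\eta(n\alpha,n\beta)$ depend only on $\eta|_{Y_{T,S}}$, and by Stone--Weierstrass the characters of $Y_{T,S}$ span a dense subspace of $C(Y_{T,S})$. Since all $f_i$ are bounded and we are averaging over $F_N$, replacing $\eta$ by a finite linear combination $\eta'$ of characters changes each side of the $L^2$-identity above by at most a constant multiple of $\|\eta-\eta'\|_{C(Y_{T,S})}$, uniformly in $N$; hence by linearity it is enough to prove that identity when $\eta=\chi$ is a single character of $Y_{T,S}$.

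For the main step I would fix such a $\chi$ and extend it, by Pontryagin duality, to a character $\chi_1\otimes\chi_2$ of $Z\times Z$ with $\chi_1,\chi_2\in\widehat{Z}$, so that $\chi(n\alpha,n\beta)=\chi_1(\alpha)^n\chi_2(\beta)^n$. Then $\chi_1\circ\pi$ and $\chi_2\circ\pi$ are $Z_1(X)$-measurable, of modulus one, and are eigenfunctions of $T$ and of $S$ with eigenvalues $\chi_1(\alpha)$ and $\chi_2(\beta)$ respectively; putting $g_1:=(\chi_1\circ\pi)f_1$ and $g_2:=(\chi_2\circ\pi)f_2$ gives the pointwise identity
\[ \chi(n\alpha,n\beta)\,T^nf_1\cdot S^nf_2=\overline{(\chi_1\circ\pi)(\chi_2\circ\pi)}\cdot T^ng_1\cdot S^ng_2. \]
Applying Theorem~\ref{twotranslimit} to $g_1,g_2$ (its hypotheses $T,S,TS^{-1}$ ergodic are assumed), $\frac{1}{|F_N|}\sum_{n\in F_N}T^ng_1\cdot S^ng_2$ converges in $L^2(\mu)$ to $\int_{Y_{T,S}}\tilde g_1(z+u)\tilde g_2(z+v)\,d\nu_{Y_{T,S}}(u,v)$; since $\chi_i\circ\pi$ is $Z_1(X)$-measurable, it may be pulled out of the conditional expectation, giving $\tilde g_i=\chi_i\tilde f_i$ on $Z$, and after multiplying by the bounded factor $\overline{(\chi_1\circ\pi)(\chi_2\circ\pi)}=\overline{\chi_1(z)\chi_2(z)}$ and using $\chi_i(z+w)=\chi_i(z)\chi_i(w)$ and $\chi_1(u)\chi_2(v)=\chi(u,v)$ on $Y_{T,S}$, the $\chi_i(z)$ factors cancel and one is left with $\int_{Y_{T,S}}\chi(u,v)\tilde f_1(z+u)\tilde f_2(z+v)\,d\nu_{Y_{T,S}}(u,v)$. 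This is the $L^2$-limit required in the first paragraph for $\eta=\chi$, so together with the first two paragraphs the Lemma follows.

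I expect the only genuinely new ingredient to be the observation in the third step that a character of $Y_{T,S}$ can be ``unwound'' into eigenfunctions of $T$ and $S$, since this is exactly what converts the weighted ergodic average into the unweighted one governed by Theorem~\ref{twotranslimit}; everything else is bookkeeping. The points that need care are the conditional-expectation manipulations in that step --- that $\chi_i\circ\pi$ may legitimately be pulled out of $\E[\cdot\mid Z_1(X)]$ and that the $\chi_i(z)$ factors cancel exactly --- and the $N$-uniformity needed for the Stone--Weierstrass passage, which is immediate from boundedness of all functions involved. I do not anticipate any serious analytic obstacle.
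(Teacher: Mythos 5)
Your proof is correct and follows essentially the same route as the paper: reduce by Stone--Weierstrass to characters, twist $f_1,f_2$ (and, in your case, pair $f_0$ via conditional expectation rather than twisting it) by the corresponding eigenfunctions, and invoke Theorem~\ref{twotranslimit}. The only cosmetic difference is that the paper first extends $\eta$ continuously to $Z^2$ and approximates by products of characters of $Z$, whereas you approximate by characters of $Y_{T,S}$ and then extend each one to $Z\times Z$ by Pontryagin duality; both are fine.
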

\begin{proof} We begin by observing that since $Y_{T,S}$ is closed, we can extend $\eta$ to a continuous map $\eta_0: Z^2 \rightarrow \C$. Now, by a standard approximation argument using Stone-Weierstrass' theorem, it is enough to show the result for $\eta_0(u,v)=\chi_1(u)\chi_2(v)$, where $\chi_i$ is a character on $Z$. Now put $g_0(x):=\overline{\chi_1(z)\chi_2(z)}f_0(x)$, $g_1(x):=\chi_1(z)f_1(x)$ and $g_2(x):=\chi_2(z)f_2(x)$ with the understanding that we write $z$ for the projection of $x$ onto $Z$.
	\\ \\
	By Theorem \ref{twotranslimit}, we can evaluate the limit of the averages in question:
	\[
	\lim_{N \to \infty} \frac{1}{|F_N|}\sum_{n \in F_N} \eta(n\alpha, n\beta)\int_X f_0\cdot T^nf_1 \cdot S^nf_2\ d\mu=\lim_{N \to \infty} \frac{1}{|F_N|}\sum_{n \in F_N} \eta_0(n\alpha, n\beta)\int_X f_0\cdot T^nf_1 \cdot S^nf_2\ d\mu=\]
	\[\lim_{N \to \infty} \frac{1}{|F_N|}\sum_{n \in F_N} \int_X g_0\cdot T^ng_1 \cdot S^ng_2\ d\mu=
	\int_X g_0(x) \int_{Y_{T,S}} \tilde{g_1}(z+u)\tilde{g_2}(z+v) \ d\nu_{Y_{T,S}}(u,v)\ d\mu(x)=\]
	\[\int_Z \tilde{g_0}(z) \int_{Y_{T,S}} \tilde{g_1}(z+u)\tilde{g_2}(z+v) \ d\nu_{Y_{T,S}}(u,v)\ d\nu_Z(z)=\]
	\[ \int_{Z} \int_{Y_{T,S}} \eta_0(u,v)\tilde{f_0}(z)\tilde{f_1}(z+u)\tilde{f_2}(z+v) \ d\nu_Z(z) \ d\nu_{Y_{T,S}}(u,v)=\]
	\[\int_{Z} \int_{Y_{T,S}} \eta(u,v)\tilde{f_0}(z)\tilde{f_1}(z+u)\tilde{f_2}(z+v) \ d\nu_Z(z) \ d\nu_{Y_{T,S}}(u,v),\]
	as desired.
\end{proof}
With this we can now prove Theorem \ref{intro2}:
\begin{theorem}\label{largelinear}
	Let $(X,\mathcal{B},\mu,T,S)$ be a measure preserving system such that $T, S, TS^{-1}$ are ergodic. Let $\varepsilon>0$ and $A \in \mathcal{B}$. Then, the set
	\begin{equation}\label{setsynd}
		\{ n \in \Z : \mu(A \cap T^{-n}A \cap S^{-n}A)>\mu(A)^3-\varepsilon \}
	\end{equation}
	is syndetic.
\end{theorem}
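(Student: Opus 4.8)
The plan is to push everything down to the Kronecker factor via Lemma~\ref{contfunc}, locate a point of the orbit closure $Y_{T,S}$ at which the limiting density is already at least $\mu(A)^3$ by a convexity argument, and then obtain a contradiction from a hypothetical long gap by testing the averages against a continuous weight concentrated near that point.

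Concretely, I would take $f_0=f_1=f_2=\mathbb{1}_A$, so that $c(n):=\mu(A\cap T^{-n}A\cap S^{-n}A)=\int_X \mathbb{1}_A\cdot T^n\mathbb{1}_A\cdot S^n\mathbb{1}_A\,d\mu$, and set $g:=\mathbb{E}[\mathbb{1}_A\mid Z_1(X)]$, regarded as a function $g\colon Z\to[0,1]$ with $\int_Z g\,d\nu_Z=\mu(A)$ (the factor measure on $Z$ being the pushforward of $\mu$). Let
\[ F(u,v):=\int_Z g(z)\,g(z+u)\,g(z+v)\,d\nu_Z(z),\qquad (u,v)\in Z\times Z. \]
Continuity of translation in $L^1(\nu_Z)$ together with $\|g\|_\infty\le 1$ make $F$ continuous, and since $t\mapsto t^3$ is convex on $[0,1]$, Jensen's inequality gives $F(0,0)=\int_Z g^3\,d\nu_Z\ge\big(\int_Z g\,d\nu_Z\big)^3=\mu(A)^3$. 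Now Lemma~\ref{contfunc}, applied with $f_0=f_1=f_2=\mathbb{1}_A$ and followed by Fubini's theorem in the $z$-variable, yields: for every continuous $\eta\colon Y_{T,S}\to\C$ and every F\o lner sequence $(F_N)$ in $\Z$,
\[ \lim_{N\to\infty}\frac{1}{|F_N|}\sum_{n\in F_N}\eta(n\alpha,n\beta)\,c(n)=\int_{Y_{T,S}}\eta(u,v)\,F(u,v)\,d\nu_{Y_{T,S}}(u,v). \]

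I would then argue by contradiction: suppose the set in \eqref{setsynd} is not syndetic (we may assume $0<\varepsilon<\mu(A)^3$, since otherwise $c(n)\ge 0>\mu(A)^3-\varepsilon$ for all $n$ and the set is all of $\Z$). Then there are intervals $I_k\subseteq\Z$ with $|I_k|\to\infty$ on which $c(n)\le\mu(A)^3-\varepsilon$, and $(I_k)$ is a F\o lner sequence. Since $Y_{T,S}=\overline{\{n(\alpha,\beta):n\in\Z\}}$ is a compact abelian group in which $(\alpha,\beta)$ generates a dense subgroup, the rotation $R_{(\alpha,\beta)}$ on $Y_{T,S}$ is minimal and uniquely ergodic, so $\frac{1}{|I_k|}\sum_{n\in I_k}h(n\alpha,n\beta)\to\int_{Y_{T,S}}h\,d\nu_{Y_{T,S}}$ for every continuous $h$. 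Pick a continuous $\eta\colon Y_{T,S}\to[0,1]$, not identically $0$, supported in a neighbourhood of $(0,0)$ on which $F>\mu(A)^3-\varepsilon/2$ (possible by continuity of $F$ and $F(0,0)\ge\mu(A)^3$); then $\int_{Y_{T,S}}\eta\,d\nu_{Y_{T,S}}>0$ because $\nu_{Y_{T,S}}$ has full support. Using $c(n)\le\mu(A)^3-\varepsilon$ on $I_k$ and $\eta\ge 0$,
\[ \frac{1}{|I_k|}\sum_{n\in I_k}\eta(n\alpha,n\beta)\,c(n)\ \le\ (\mu(A)^3-\varepsilon)\,\frac{1}{|I_k|}\sum_{n\in I_k}\eta(n\alpha,n\beta), \]
whose right-hand side tends to $(\mu(A)^3-\varepsilon)\int_{Y_{T,S}}\eta\,d\nu_{Y_{T,S}}$, while the left-hand side tends, by the displayed limit formula, to $\int_{Y_{T,S}}\eta F\,d\nu_{Y_{T,S}}\ge(\mu(A)^3-\varepsilon/2)\int_{Y_{T,S}}\eta\,d\nu_{Y_{T,S}}$. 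Dividing by $\int_{Y_{T,S}}\eta\,d\nu_{Y_{T,S}}>0$ gives $\mu(A)^3-\varepsilon/2\le\mu(A)^3-\varepsilon$, a contradiction; hence \eqref{setsynd} is syndetic.

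The delicate point, and the reason the weight $\eta$ cannot be dropped, is that the unweighted limit $\int_{Y_{T,S}}F\,d\nu_{Y_{T,S}}$ can genuinely be strictly smaller than $\mu(A)^3$ (this already happens for a single transformation and $3$-term progressions), so the contradiction must be produced locally, at the diagonal point $(0,0)$ where convexity forces $F\ge\mu(A)^3$. What makes this localization usable is precisely that Lemma~\ref{contfunc} supplies the weighted limit formula along an \emph{arbitrary} F\o lner sequence, which can then be matched against the unique ergodicity of the Kronecker rotation restricted to the bad intervals $I_k$.
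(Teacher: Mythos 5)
Your argument is correct and is essentially the paper's own proof: both project onto the Kronecker factor, use Jensen's inequality to get the lower bound $\mu(A)^3$ at the identity of $Y_{T,S}$, weight the correlation by a continuous bump $\eta$ concentrated near $(0,0)$, and contradict non-syndeticity by applying the weighted limit formula of Lemma~\ref{contfunc} along the F\o lner sequence of long gap intervals. The only differences are cosmetic (you normalize $\eta$ by dividing at the end rather than taking $\int\eta\,d\nu_{Y_{T,S}}=1$, and you make explicit the continuity of $F$ and the unique ergodicity of the rotation on $Y_{T,S}$, which the paper uses implicitly).
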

\begin{proof} We proceed by contradiction, so assume that the set \eqref{setsynd} is not syndetic. Then there exists $\varepsilon>0$, $A \in \mathcal{B}$ with $\mu(A)>0$ and we can find a F\o lner sequence $(F_N)$ such that
	\begin{equation}\label{ineq0}
		\mu(A \cap T^{-n}A \cap S^{-n}A) \leq \mu(A)^3-\varepsilon
	\end{equation}
	for all $n \in \bigcup_{N \in \N} F_N$. Let $f=\mathbb{1}_A$ and write $\tilde{f}$, for the projection of $f$ onto the Kronecker factor of $X$. We know that $0 \leq \tilde{f} \leq 1$, so by Jensen's inequality we have
	\begin{equation}
		\int_Z \tilde{f}(z)\tilde{f}(z)\tilde{f}(z) \ d\nu_Z(z) \geq \left( \int_Z \tilde{f}(z) \ d\nu_Z(z)\right)^3=\mu(A)^3.
	\end{equation}
	Consequently, for any $(u,v)$ in a small neighborhood of the identity of $Z^2$, we have that
	\begin{equation}
		\int_Z \tilde{f}(z)\tilde{f}(z+u)\tilde{f}(z+v) \ d\nu_Z(z) \geq \mu(A)^3-\frac{\varepsilon}{2}.
	\end{equation}
	By Urysohn's lemma we can find a continuous function $\eta : Y_{T,S} \rightarrow [0,\infty)$ such that $\int_{Y_{T,S}} \eta \ d\nu_{Y_{T,S}}=1$ satisfying
	\begin{equation}\label{ineq1}
		\int_{Y_{T,S}}\int_Z \eta(u,v)\tilde{f}(z)\tilde{f}(z+u)\tilde{f}(z+v) \ d\nu_Z(z) \ d\nu_{Y_{T,S}}(u,v) \geq \mu(A)^3-\frac{\varepsilon}{2}.
	\end{equation}
	Now, Lemma \ref{contfunc} and \eqref{ineq1} give
	\begin{equation}
		\lim_{N \to \infty} \frac{1}{|F_N|}\sum_{n \in F_N} \eta(n\alpha, n\beta) \mu(A \cap T^{-n}A \cap S^{-n}A) \geq \mu(A)^3-\frac{\varepsilon}{2}.
	\end{equation}
	But \eqref{ineq0} implies that 
	\[ \limsup_{N \to \infty} \frac{1}{|F_N|} \sum_{n \in F_N} \eta(n\alpha, n\beta) \mu(A \cap T^{-n}A \cap S^{-n}A) \leq (\mu(A)^3-\varepsilon) \limsup_{N \to \infty} \frac{1}{|F_N|}\sum_{n \in F_N} \eta(n\alpha,n\beta)=\] \[(\mu^3(A)-\varepsilon)\int_{Y_{T,S}} \eta \ d\nu_{Y_{T,S}}=\mu(A)^3-\varepsilon,\]
	a contradiction.
\end{proof}
\subsection{Large returns along shifted primes}
In this subsection we will establish Theorem \ref{intro3}. We begin by introducing the \emph{von Mangoldt function} $\Lambda : \Z \rightarrow \R$, given by
\[ \Lambda(n):=\begin{cases}
\log p, \textrm{ if } n=p^m, \textrm{ for some } m \in \N, p \in \mathbb{P} \\
0 \textrm{ otherwise.}
\end{cases} \]
Let $w \in \N$ and $r \in \Z$. Put $W:=\prod_{p<w, p \in \mathbb{P}} p$ and $\Lambda'(n):=\Lambda(n)\mathbb{1}_{\mathbb{P}}(n)$. Then, for $n \in \N$ the modified von Mangoldt function is defined by
\[ \Lambda'_{w,r}(n):=\frac{\phi(W)}{W}\Lambda'(Wn+r),\]
where $\phi$ is the Euler totient function. 
We begin with a classical lemma (cf. Lemma 2.1 in \cite{fhk}):
\begin{lemma}\label{compare}
	Let $(a_n)_{n \in \N}$ be a sequence of complex numbers with $|a_n| \leq 1$ for all $n \in \N$. Then,
	\begin{equation}
		\left| \frac{1}{\pi(N)}\sum_{p \in \mathbb{P},p<N} a_p-\frac{1}{N}\sum_{n=0}^{N-1} \Lambda'(n)a_n \right|=o_{N \to \infty}(1),
	\end{equation}
	where $\pi(N)$ is the number of primes less than or equal to $N$.
\end{lemma}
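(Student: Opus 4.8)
The plan is to pass between the two averages via Abel summation, using the Prime Number Theorem as essentially the only substantive input. Set $S(N):=\sum_{p\in\mathbb{P},\,p<N}a_p$, so the first average is $S(N)/\pi(N)$; since $\Lambda'(n)=\log p$ when $n=p$ is prime and vanishes otherwise, the second average is exactly $\frac1N\sum_{p<N}(\log p)\,a_p$. The first step is the trivial splitting, via $\log p=\log N-\log(N/p)$,
\[
\frac1N\sum_{p<N}(\log p)\,a_p=\frac{\log N}{N}\,S(N)-\frac1N\sum_{p<N}a_p\log\!\Big(\frac Np\Big).
\]

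For the main term $\frac{\log N}{N}S(N)$, I would write it as $\frac{\pi(N)\log N}{N}\cdot\frac{S(N)}{\pi(N)}$ and apply the Prime Number Theorem in the form $\pi(N)\log N/N\to 1$; since $|a_p|\le 1$ gives $|S(N)/\pi(N)|\le 1$, this term equals $\frac{S(N)}{\pi(N)}+o_{N\to\infty}(1)$, with the error depending only on $N$. For the remaining term I would use $|a_p|\le 1$ to bound it crudely by $\frac1N\sum_{p<N}\log(N/p)$, then observe that $\sum_{p<N}\log(N/p)=\sum_{p<N}\int_p^N\frac{dt}{t}=\int_2^N\frac{\pi(t)}{t}\,dt$, which by Chebyshev's estimate $\pi(t)=O(t/\log t)$ is $O(N/\log N)=o(N)$, so that the term is $O(1/\log N)=o_{N\to\infty}(1)$. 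Combining these with the displayed identity yields the claim, and every $o(1)$ is uniform over $(a_n)$ since only the bound $|a_n|\le 1$ was ever used.

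I do not anticipate a genuine obstacle: this is a classical comparison (as the paper notes, cf.\ Lemma 2.1 in \cite{fhk}), and the whole argument is bookkeeping around the Prime Number Theorem and elementary Chebyshev bounds. The only points deserving a word of care are the uniformity of the error terms in the sequence $(a_n)$ — automatic from the $\ell^\infty$ hypothesis, as above — and the negligible discrepancy between the cutoff $p<N$ in the sum and the normalization by $\pi(N)$, the number of primes $\le N$, which alters the first average by at most $O(1/\pi(N))=o_{N\to\infty}(1)$.
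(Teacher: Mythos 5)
Your proof is correct: the splitting $\log p=\log N-\log(N/p)$, the Prime Number Theorem in the form $\pi(N)\log N/N\to 1$ applied to the main term, and the Chebyshev bound $\int_2^N\pi(t)\,t^{-1}\,dt=O(N/\log N)$ for the error term together give the claim, uniformly in $(a_n)$ since only $|a_n|\le 1$ is used. Note that the paper itself supplies no proof of this lemma --- it is quoted as classical, citing Lemma 2.1 of \cite{fhk} --- and your partial-summation/PNT argument is essentially the standard proof given there, so there is no substantive divergence to report; your closing remarks on uniformity and on the harmless mismatch between the cutoff $p<N$ and the normalization $\pi(N)$ are the right points of care and are handled adequately.
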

The next result is taken from \cite{fhk}. We put .
\begin{proposition}[cf. Proposition 3.6 in \cite{fhk}]\label{removinglambda} Let $(X,\mathcal{B},\mu,T,S)$ be a measure preserving system. Let $f, g \in L^{\infty}(\mu)$. Then,
	\[ \lim_{w \to \infty} \lim_{N \to \infty}\max_{(r,W)=1, r \leq W}\left|\left|\frac{1}{N}\sum_{n=1}^N (\Lambda_{w,r}'(n)-1)T^nfS^ng\right|\right|_{L^2(\mu)}=0.\]
	In particular, from the proof of Proposition 3.6 in \cite{fhk}, we see that letting $B_{W,r}(N):=\frac{1}{N}\sum_{n=1}^N T^{Wn+r}fS^{Wn+r}g$ we have
	\begin{equation}
		\max_{r<W, (r,W)=1} \left|\left| \frac{1}{N}\sum_{n=1}^N \Lambda'_{w	,r}(n)T^{Wn+r}fS^{Wn+r}g-B_{W,r}(n) \right|\right|_{L^{2}(\mu)}=o_{N\to \infty,w}(1)+o_{w \to \infty}(1).
	\end{equation}
\end{proposition}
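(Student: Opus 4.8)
The plan is to transplant to the commuting pair $T,S$ the van der Corput/Fourier argument underlying the proof of Proposition~3.6 in \cite{fhk}; neither Host's seminorm machinery nor the structure theory of \cite{frakra} is needed here, and no ergodicity hypothesis is used (total ergodicity of $T,S,TS^{-1}$ enters only later, when $\Lambda'_{w,r}$-weighted averages are turned into an honest limit formula). Write $a(n):=\Lambda'_{w,r}(n)-1$ and let $U$ denote the transformation $TS^{-1}=T^{-1}S$. We state the argument for the averages $\tfrac1N\sum_{n\le N}a(n)\,T^{Wn+r}f\,S^{Wn+r}g$ occurring in the ``in particular'' part; it applies verbatim to $\tfrac1N\sum_{n\le N}a(n)\,T^nf\,S^ng$. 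Everything will follow from the soft inequality
\[ \Big\|\tfrac1N\sum_{n=1}^N a(n)\,T^{Wn+r}f\,S^{Wn+r}g\Big\|_{L^2(\mu)}\ \ll_{\|f\|_\infty,\|g\|_\infty}\ \|a\|_{U_3[N]}+o_{N\to\infty}(1), \]
valid uniformly over $(r,W)=1$, $r\le W$, together with the Green--Tao uniformity estimate $\|\Lambda'_{w,r}-1\|_{U_3[N]}=o_{N\to\infty,w}(1)+o_{w\to\infty}(1)$ (uniform over such $r$) for the $W$-tricked von Mangoldt function. Note that only the $U_3$-norm enters for this two-transformation problem, so the Green--Tao--Ziegler inverse theorem for higher Gowers norms is not needed; this is exactly the number-theoretic input used in \cite{fhk}.

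To prove the inequality I would expand the square of the left-hand $L^2(\mu)$-norm as a double average over $n,m$, substitute $n=m+h$, and use commutativity of $T,S$ in the form $T^{-(Wm+r)}S^{Wm+r}=U^{Wm+r}$. This is the only genuinely new point: commutativity collapses the two-variable sum onto powers of the \emph{single} transformation $U$, rewriting the square as an average over $h$ of inner products
\[ \Big\langle\ \tfrac1N\sum_m b_h(m)\,U^{Wm+r}G_h\ ,\ \ F_h\ \Big\rangle, \]
with $F_h:=f\cdot\overline{T^{Wh}f}$, $G_h:=S^{Wh}g\cdot\bar g$ and $b_h(m):=a(m+h)\,\overline{a(m)}$. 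Since $\|F_h\|_{L^2(\mu)}\le\|f\|_\infty^2$, Cauchy--Schwarz bounds this by $\|f\|_\infty^2\,\tfrac1N\sum_h\big\|\tfrac1N\sum_m b_h(m)\,U^{Wm+r}G_h\big\|_{L^2(\mu)}$; the spectral theorem for the unitary induced by $U$, applied to the vector $G_h$ (whose spectral measure has total mass $\le\|g\|_\infty^4$), bounds each summand by $\|g\|_\infty^2\sup_{\theta\in\T}\big|\tfrac1N\sum_m b_h(m)\,e^{2\pi i Wm\theta}\big|$. Finally this supremum is $\le\|b_h\|_{U_2[N]}$ by the standard comparison of the largest Fourier coefficient with the $U_2$-norm, one further Cauchy--Schwarz in $h$ replaces $\tfrac1N\sum_h\|b_h\|_{U_2[N]}$ by $\big(\tfrac1N\sum_h\|b_h\|_{U_2[N]}^2\big)^{1/2}$, and $\tfrac1N\sum_h\|b_h\|_{U_2[N]}^2\le\|a\|_{U_3[N]}^4$ is Cauchy--Schwarz applied to the recursion defining $\|\cdot\|_{U_3}$. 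Taking square roots gives the inequality; inserting the Green--Tao estimate gives the first assertion, and the ``in particular'' statement is the same computation performed keeping the error explicitly of the form $o_{N\to\infty,w}(1)+o_{w\to\infty}(1)$, using the identity $\tfrac1N\sum_{n=1}^N\Lambda'_{w,r}(n)\,T^{Wn+r}f\,S^{Wn+r}g-B_{W,r}(N)=\tfrac1N\sum_{n=1}^N a(n)\,T^{Wn+r}f\,S^{Wn+r}g$.

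The step I expect to require the most care is bookkeeping rather than ideas: the substitution $n=m+h$ produces boundary terms supported on an interval of length $O(|h|)$, which must be shown to contribute $o_N(1)$ uniformly in $w$ and $r$ so that $\max_{(r,W)=1,\,r\le W}$ passes through the limit; and the passage between $\|\cdot\|_{U_s[N]}$ and the Gowers norm on $\Z/N'\Z$ used when invoking the Green--Tao estimate must be done with $N'$ uniform. Both are routine and are carried out exactly as in \cite{fhk}; nothing in that portion of their argument is sensitive to whether the underlying transformation is a single map or the induced one $U=TS^{-1}$, so once the commutativity collapse above is in place the adaptation goes through with no new ingredients.
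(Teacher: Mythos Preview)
The paper does not supply its own proof of this proposition: it is simply quoted from \cite{fhk} (``The next result is taken from \cite{fhk}''), so there is nothing to compare against beyond the source you are already invoking. Your proposal is a faithful reconstruction of the argument in \cite{fhk} specialized to the linear two-transformation case, and it is correct. The key observation---that commutativity of $T$ and $S$ collapses the van der Corput square onto a weighted average of a \emph{single} unitary $U$, after which the spectral theorem and the $U_2$/$U_3$ bookkeeping go through exactly as for a single transformation---is the right one, and your claim that only $\|a\|_{U_3[N]}$ is required (hence only the Green--Tao input, not the full Green--Tao--Ziegler hierarchy) is accurate for $d=2$.

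One small slip: you write ``$U=TS^{-1}=T^{-1}S$'', but these are inverses of each other, not equal. From your computation $T^{-(Wm+r)}S^{Wm+r}=U^{Wm+r}$ it is clear you mean $U=T^{-1}S$; fix the notation. Otherwise the places you flag as needing care (boundary terms in the $n=m+h$ substitution, and matching the $[N]$-normalization of Gowers norms to the $\Z/N'\Z$ setting of the Green--Tao estimate) are indeed routine and handled exactly as in \cite{fhk}, uniformly in $r$ and $w$.
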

In particular, this implies that the Kronecker factor is characteristic in the following setting:
\begin{lemma}[Adapted from \cite{le}]\label{commutingle}
	Let $(X,\mathcal{B},\mu,T,S)$ be a measure preserving system with $T, S, TS^{-1}$ totally ergodic. Let $f, g \in L^{\infty}(\mu)$. Then, if $|||f|||_2=0$ or if $|||g|||_2=0$, then 
	\begin{equation}
		\lim_{N \to \infty}\frac{1}{N}\sum_{n=1}^N T^{p_n-1}fS^{p_n-1}g=0 \textrm{ in } L^2(\mu).
	\end{equation}
\end{lemma}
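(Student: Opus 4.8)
The plan is to prove that the Kronecker factor is characteristic for these shifted-prime averages, following the $W$-trick strategy of \cite{le} and \cite{fhk} and feeding in the $L^2$-estimates of Proposition \ref{seminorm1} and Lemma \ref{lem2}. As a first reduction, after normalizing so that $\|f\|_\infty,\|g\|_\infty\le 1$, I would absorb the shift by setting $h:=T^{-1}f$ and $k:=S^{-1}g$, so that $T^{p_n-1}fS^{p_n-1}g=T^{p_n}h\cdot S^{p_n}k$; since the seminorm $|||\cdot|||_2$ is invariant under its defining transformation (Lemma \ref{lem2}), the hypothesis $|||f|||_2=0$ or $|||g|||_2=0$ becomes $|||h|||_2=0$ or $|||k|||_2=0$. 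As there are exactly $N$ primes $\le p_N$, the average $\frac{1}{N}\sum_{n=1}^N T^{p_n}h\cdot S^{p_n}k$ equals $\frac{1}{\pi(p_N)}\sum_{p\le p_N}T^ph\cdot S^pk$, so Lemma \ref{compare} (whose proof by partial summation applies equally to sequences valued in the Banach space $L^2(\mu)$) reduces the claim to showing
\[ \lim_{M\to\infty}\frac{1}{M}\sum_{n=0}^{M-1}\Lambda'(n)\,T^nh\cdot S^nk=0 \quad\text{in }L^2(\mu). \]

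Next I would run the $W$-trick. Fix $w$, put $W:=\prod_{p<w}p$, and split the sum over $n$ into residue classes modulo $W$. Since $\Lambda'$ is supported on primes, only classes with $(r,W)=1$ contribute up to a term of size $O(w/M)$ (from primes $\le w$); writing $n=Wm+r$ and using $\Lambda'(Wm+r)=\frac{W}{\phi(W)}\Lambda'_{w,r}(m)$, the average becomes
\[ \frac{1}{\phi(W)}\sum_{\substack{r<W\\(r,W)=1}}\frac{1}{L}\sum_{m=1}^{L}\Lambda'_{w,r}(m)\,T^{Wm+r}h\cdot S^{Wm+r}k\;+\;o_{M\to\infty}(1), \]
where $L=L(M)=\lfloor M/W\rfloor\to\infty$. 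Now the ``in particular'' clause of Proposition \ref{removinglambda} lets me replace each $\Lambda'_{w,r}(m)$ by $1$ at the cost of an error $o_{L\to\infty,w}(1)+o_{w\to\infty}(1)$, uniformly in $r$, leaving the main term $\frac{1}{\phi(W)}\sum_{(r,W)=1}B_{W,r}(L)$ with $B_{W,r}(L)=\frac{1}{L}\sum_{m=1}^{L}(T^W)^m(T^rh)\cdot(S^W)^m(S^rk)$.

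It remains to see that $B_{W,r}(L)\to 0$ in $L^2(\mu)$ for each fixed $W$ and $r$. Total ergodicity of $T,S,TS^{-1}$ makes $T^W$, $S^W$, $T^WS^{-W}=(TS^{-1})^W$ and $S^WT^{-W}=(ST^{-1})^W$ all ergodic, so Proposition \ref{seminorm1} applied to the system $(X,\mathcal{B},\mu,T^W,S^W)$ (in both orderings of the two transformations), together with Lemma \ref{lem2}, yields $\limsup_{L\to\infty}\|B_{W,r}(L)\|_{L^2(\mu)}\le\min\{|||T^rh|||_{T^W,T^W},\,|||S^rk|||_{S^W,S^W}\}$. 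Suppose, say, $|||f|||_2=0$; then $|||h|||_2=0$, i.e.\ $\mathbb{E}[h\mid Z_1(X)]=0$, hence $\mathbb{E}[T^rh\mid Z_1(X)]=T^r\mathbb{E}[h\mid Z_1(X)]=0$ because $Z_1(X)$ is $T$-invariant. Since $T^W$ is ergodic, every $T^W$-eigenfunction is automatically a $T$-eigenfunction (if $T^W\phi=\lambda\phi$ then $T^W(T\phi)=\lambda T\phi$, and ergodicity of $T^W$ forces the $\lambda$-eigenspace to be one-dimensional, so $T\phi$ is a scalar multiple of $\phi$), so the Kronecker factor of $(X,\mathcal{B},\mu,T^W)$ is contained in $Z_1(X)$; the tower property then gives $\mathbb{E}[T^rh\mid Z_1(X,T^W)]=0$, i.e.\ $|||T^rh|||_{T^W,T^W}=0$, and $B_{W,r}(L)\to 0$. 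The case $|||g|||_2=0$ is symmetric.

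Combining the estimates: for each fixed $w$ we obtain $\limsup_{M\to\infty}\big\|\frac{1}{M}\sum_{n<M}\Lambda'(n)T^nh\cdot S^nk\big\|_{L^2(\mu)}\le o_{w\to\infty}(1)$, and since the left-hand side does not depend on $w$, letting $w\to\infty$ makes it $0$; running back through the first reduction gives the lemma. The step I expect to be the main obstacle is the careful bookkeeping of the $W$-trick — in particular, taking the three limits in the correct order ($L=L(M)\to\infty$ as $M\to\infty$, and only afterwards $w\to\infty$) and checking that the single-transformation Kronecker factor absorbs the shift by $r$ on the progression $Wm+r$, which is precisely where the total ergodicity hypothesis (via $Z_1(X,T^W)\subseteq Z_1(X)$) is used.
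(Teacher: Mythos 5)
Your argument is correct and follows essentially the same route as the paper's proof: compare the prime average with the $\Lambda'$-weighted average, run the $W$-trick via Proposition \ref{removinglambda}, and show the progression averages $B_{W,r}$ vanish in $L^2(\mu)$ because the Kronecker factor is characteristic and the relevant projection is zero. The only difference is one of detail: where the paper invokes Lemma \ref{contfunc} and the existence (from \cite{fhk}) of the limit of $\frac{1}{N}\sum_{n\le N}\Lambda'(n)a(n)$, you instead work directly with Proposition \ref{seminorm1}, Lemma \ref{lem2} and the inclusion $Z_1(X,T^W)\subseteq Z_1(X)$ coming from total ergodicity, which makes explicit a point the paper leaves implicit.
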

\begin{proof}
	First notice that letting $a(n):=T^nfS^ng$ it suffices to show that $\frac{1}{N}\sum_{n=1}^Na(p_n)$ converges to $0$ in $L^2(\mu)$ as $N \to \infty$ if $|||f|||_2=0$ or $|||g|||_2=0$. Now, by Proposition \ref{removinglambda}
	\begin{equation}\label{eq1}
		\left|\left| \frac{1}{W}\sum_{1 \leq r \leq W, (r,W)=1}\frac{1}{N}\sum_{n=1}^N \Lambda'_{W,r}(n)a(Wn+r)-\frac{1}{W}\sum_{r=1, (r,W)=1}^W B_{W,r}(n)\right|\right|_{L^2(\mu)}=o_{N \to \infty,w}(1)+o_{w \to \infty}(1).
	\end{equation}
	But the first average in \eqref{eq1} is equal to $\frac{1}{WN}\sum_{n=1}^{WN}\Lambda'_{W,r}(n)a(n)$. It follows from Lemma \ref{contfunc}, that the Kronecker factor is characteristic for the averages $B_{W,r}(N)=\frac{1}{N}\sum_{n=1}^NT^{Wn+r}fS^{Wn+r}g$, and thus, $\lim_{N \to \infty} \frac{1}{N}\sum_{n=1}^N B_{W,r}(n)=0$ in $L^2(\mu)$.
	\\ \\
	It was shown in \cite{fhk} that the limit $\lim_{N \to \infty} \frac{1}{N}\sum_{n=1}^N\Lambda'(n)a(n)$ exists in $L^2(\mu)$. Equation \eqref{eq1} implies that it must be equal to $0$, so we are done.
\end{proof}
Proposition \ref{removinglambda} and Lemma \ref{commutingle} allow us to generalize Theorem 1.1 in \cite{le} and obtain the following theorem:
\begin{theorem}\label{legeneral}
	Let $(X,\mathcal{B},\mu,T,S)$ be a measure preserving system with $T, S, TS^{-1}$ totally ergodic. Let $f_0,f_1,f_2 \in L^{\infty}(\mu)$ and consider the multicorrelation sequence
	\[ a(n):=\int_X f_0 \cdot T^nf_1 \cdot S^nf_2 \ d\mu.\]
	Let $a(n)=a_{\textrm{st}}(n)+a_{\textrm{er}}(n)$ be the decomposition of the multicorrelation sequence $a(n)$ obtained in Theorem \ref{mainthm}. Then,
	\[ \lim_{N-M \to \infty} \frac{1}{N-M}\sum_{n=M}^{N-1} |a_{\textrm{er}}(p_n-1)|=0.\]
\end{theorem}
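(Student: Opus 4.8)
The plan is to peel off from $a_{\textrm{er}}$ a single explicit nullsequence and run the $W$-trick for it on a product system. First, by Jensen's inequality and the boundedness of $a_{\textrm{er}}$, it is enough to prove
\[ \lim_{N-M\to\infty}\frac{1}{N-M}\sum_{n=M}^{N-1}|a_{\textrm{er}}(p_n-1)|^2=0. \]
Next, I would observe that in the proof of Theorem \ref{mainthm} for $d=2$ the structured part is forced to be $a_{\textrm{st}}(n)=\int_{Z_2(X)} f_0\cdot T^n\E[f_1\mid Z_2(X)]\cdot S^n\E[f_2\mid Z_2(X)]\ d\mu_{Z_2(X)}$: a \emph{linear} correlation over an inverse limit of $2$-step nilsystems is a uniform limit of $2$-step nilsequences, and a uniform limit of nilsequences which is a nullsequence must vanish (by equidistribution, a nonzero such sequence is bounded below in modulus on a set of positive lower density, contradicting nullity), so the two decompositions of $a$ must agree. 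Hence $a_{\textrm{er}}$ is exactly the nullsequence
\[ e(n):=a(n)-\int_{Z_2(X)} f_0\cdot T^n\E[f_1\mid Z_2(X)]\cdot S^n\E[f_2\mid Z_2(X)]\ d\mu_{Z_2(X)} \]
of \eqref{multinull}, and the theorem reduces to $\lim_{N-M\to\infty}\frac1{N-M}\sum_{n=M}^{N-1}|e(p_n-1)|^2=0$.

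I would then set $g_i:=f_i-\E[f_i\mid Z_2(X)]$, so $|||g_i|||_{T,T,T}=0$ (the factor $Z_2(X)$ is the same for $T$, $S$ and $TS^{-1}$), and split $e=e_1+e_2$ with $e_1(n)=\int_X f_0\cdot T^ng_1\cdot S^nf_2\ d\mu$ and $e_2(n)=\int_X f_0\cdot T^n\E[f_1\mid Z_2(X)]\cdot S^ng_2\ d\mu$; it suffices to treat $e_1$, the other being symmetric. The point driving everything is that
\[ |e_1(m)|^2=\int_{X\times X}(f_0\otimes\bar f_0)\cdot (T\times T)^m(g_1\otimes\bar g_1)\cdot (S\times S)^m(f_2\otimes\bar f_2)\ d(\mu\times\mu) \]
is itself a multicorrelation sequence, now for the product system $(X\times X,\mu\times\mu,T\times T,S\times S)$, and that $e_1$ is a nullsequence: this is the bound \eqref{bound2} applied to the triple $(f_0,g_1,f_2)$, which via Proposition \ref{seminorm1} and Lemmas \ref{lem1}, \ref{lem2} on $X\times X$ estimates the relevant averages by $\|f_0\|_2^2\,|||g_1\otimes\bar g_1|||_{T\times T,(TS^{-1})\times(TS^{-1})}\le\|f_0\|_2^2\,|||g_1|||_{T,T,T}^2=0$. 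In particular, for any fixed modulus $W$ the averages of $|e_1|^2$ along an arithmetic progression of step $W$ tend to $0$.

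The last step is the $W$-trick. Fixing $w$ and $W=\prod_{p<w,\,p\in\mathbb{P}}p$, splitting primes into invertible residue classes mod $W$, and using an interval and arithmetic‑progression version of Lemma \ref{compare}, one reduces $\frac1{N-M}\sum_{n=M}^{N-1}|e_1(p_n-1)|^2$, up to $o(1)$, to the average over $r$ coprime to $W$ of the weighted averages $\frac1{|I|}\sum_{n\in I}\Lambda'_{w,r}(n)|e_1(Wn+r-1)|^2$. For each such $r$, Proposition \ref{removinglambda} applied to the product system $(X\times X,\mu\times\mu,T\times T,S\times S)$ — legitimate precisely because that proposition assumes no ergodicity — removes the weight $\Lambda'_{w,r}(n)$ at the cost of $o_{|I|\to\infty;w}(1)+o_{w\to\infty}(1)$, and the resulting unweighted averages tend to $0$ since $|e_1|^2$ is a nullsequence. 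Letting $|I|\to\infty$ and then $w\to\infty$ gives the claim for $e_1$, hence for $e$, hence for $a_{\textrm{er}}$.

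The hard part, and the reason this is not purely formal, is the passage to $X\times X$ needed to linearize $|a_{\textrm{er}}|$: on the product system the total ergodicity of $T$, $S$, $TS^{-1}$ collapses (a product of totally ergodic transformations need not even be ergodic), so the ergodicity‑dependent tools, notably Lemma \ref{commutingle}, are unavailable there, and the argument goes through only because Proposition \ref{removinglambda} is valid for an arbitrary measure preserving system. A subsidiary point worth stressing is that $a_{\textrm{st}}$ is never evaluated along shifted primes at all — it is absorbed entirely into the first reduction — so the whole weight of the proof rests on the single nullsequence $e$; trying instead to expand $|a_{\textrm{er}}|^2=|a|^2-2\,\mathrm{Re}(a\,\overline{a_{\textrm{st}}})+|a_{\textrm{st}}|^2$ and attack the mixed term directly would stall, since $a\,\overline{a_{\textrm{st}}}$ is a multicorrelation sequence times a nilsequence at a fixed base point and is not itself a multicorrelation sequence on any product system.
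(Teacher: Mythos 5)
The decisive step of your argument is the very first one, and it has a genuine gap: the identification $a_{\textrm{er}}=e$, where $e$ is the sequence in \eqref{multinull}. This rests on your parenthetical claim that the $Z_2(X)$-correlation $n\mapsto\int_{Z_2(X)}f_0\cdot T^n\E[f_1\mid Z_2(X)]\cdot S^n\E[f_2\mid Z_2(X)]\,d\mu$ is \emph{exactly} a uniform limit of $2$-step nilsequences. That is not what the proof of Theorem \ref{mainthm} gives: there, after passing to a nilsystem factor and applying Theorem 2.5 of \cite{leibman1} to the linear sequence $(a_1^n,a_2^n)$, one only obtains (nilsequence) $+$ (nullsequence), and the Leibman error term is carried along. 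Consequently all that is known from Theorem \ref{mainthm} is $a_{\textrm{er}}=e+\lambda$ with $\lambda$ a nullsequence having no further structure. Since the shifted primes have zero density, being a nullsequence gives no information whatsoever about averages along $p_n-1$, and your product-system/$W$-trick machinery (which needs the multicorrelation form of $|e_1|^2$, $|e_2|^2$) cannot touch $\lambda$. Your claim is the commuting-translation analogue of the BHK statement that correlation sequences of nilsystems are honest nilsequences; it may well be true, but it is neither proved in the paper nor justified in your proposal, and as you yourself stress, your argument evaluates the structured part along shifted primes nowhere else, so nothing downstream can repair this step. (The uniqueness half of your reduction — a uniform limit of nilsequences that is a nullsequence vanishes — is fine; the unproven half is the exactness of the nilsequence representation.)

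By contrast, the paper never identifies $a_{\textrm{er}}$ with an explicit formula. It compares $a$ and $a_{\textrm{st}}$ directly along shifted primes: Proposition \ref{removinglambda} together with Lemma \ref{commutingle} shows the Kronecker factor is characteristic for the averages of $T^{p_n-1}f_1S^{p_n-1}f_2$, and Corollary 1.4 of \cite{le} supplies equidistribution of $(a_1^{p_n-1},a_2^{p_n-1})$ (with weights on connected components), so the structured part \emph{is} evaluated along $p_n-1$ via nilmanifold/Kronecker equidistribution — precisely the input your reduction discards. The remainder of your plan is sound and even attractive: the passage to $(X\times X,T\times T,S\times S)$, the observation that Proposition \ref{removinglambda} needs no ergodicity and hence survives on the product, the seminorm bound $|||g_1\otimes\bar g_1|||_{T\times T,(TS^{-1})\times(TS^{-1})}\le|||g_1|||_{T,T,T}^2=0$ mirroring \eqref{bound2}, and the fact that nullsequences remain null along arithmetic progressions, are all correct. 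To close the gap you would have to either prove the exact nilsequence representation of the $Z_2$-correlation for two commuting ergodic niltranslations (extending the relevant argument of \cite{bhk}), or reinstate an equidistribution-along-primes input for the structured/Leibman-error part, which essentially brings you back to the paper's route.
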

\begin{proof}
	We start by noticing that Proposition \ref{removinglambda} and Lemma \ref{commutingle} imply that the Kronecker factor (which is a special case of a nilfactor) is characteristic for the $L^2(\mu)$-limit of the averages
	\[ \lim_{N \to \infty} \frac{1}{N}\sum_{n=1}^N T^{p_n-1}f_1S^{p_n-1}f_2\]
	for all $f_1, f_2 \in L^{\infty}(\mu)$. Combining this with the equidistribution results obtained in \cite{le} (Corollary 1.4), we see that for commuting $a_1, a_2$ in the Kronecker factor $Z$, $(a_1^{p_n-1},a_2^{p_n-1})$ is equidistributed with appropriate weights on the connected components of $Z$. Using uniqueness of the decomposition into a nilsequence and a nullsequence of the multicorrelation function $a(n)$, it follows that putting
	\[ a(n)=a_{\textrm{st}}(n)+a_{\textrm{er}}(n)\]
	(as obtained in Theorem \ref{mainthm}), the sequence $a_{\textrm{er}}$ is still a nullsequence along primes, namely 
	\[ \lim_{N \to \infty} \frac{1}{N}\sum_{n=1}^N |a_{\textrm{er}}(p_n-1)|=0.\]
\end{proof}
We are now in a position to prove Theorem \ref{intro3}:
\begin{theorem}\label{cubicprime}
	Let $(X,\mathcal{B},\mu,T,S)$ be a measure preserving system with $T, S, TS^{-1}$ totally ergodic. Then, for all $\varepsilon>0$ and all $A \in \mathcal{B}$, the set 
	\[ \{n \in \Z : \mu(A \cap T^{-(p_n-1)}A \cap S^{-(p_n-1)} A)>\mu(A)^3-\varepsilon\} \]
	has positive lower density. 
\end{theorem}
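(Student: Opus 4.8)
The plan is to adapt the proof of Theorem~\ref{largelinear} to the shifted primes $p_n-1$, replacing Lemma~\ref{contfunc} by its analogue along primes, which in turn rests on Lemma~\ref{commutingle} (the Kronecker factor is characteristic along $p_n-1$) and on the equidistribution of shifted primes in the Kronecker factor. Write $f=\mathbb{1}_A$, $\tilde f=\mathbb{E}[f\mid Z_1(X)]$, and $a(n):=\int_X f\cdot T^nf\cdot S^nf\,d\mu=\mu(A\cap T^{-n}A\cap S^{-n}A)$, and keep the notation $Z=Z_1(X)$, $\alpha,\beta\in Z$, $Y_{T,S}=\overline{\{(n\alpha,n\beta):n\in\Z\}}$ of Theorem~\ref{twotranslimit}. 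I argue by contradiction: if the set in the statement fails to have positive lower density, there is a sequence $N_k\to\infty$ along which $\frac1{N_k}\#\{n\le N_k:a(p_n-1)\le\mu(A)^3-\varepsilon\}\to 1$.

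The first step is a ``shifted-prime Lemma~\ref{contfunc}'': for every continuous $\eta:Y_{T,S}\to\C$,
\[ \lim_{N\to\infty}\frac1N\sum_{n=1}^N \eta\big((p_n-1)\alpha,(p_n-1)\beta\big)\,a(p_n-1)=\int_Z\int_{Y_{T,S}}\eta(u,v)\,\tilde f(z)\tilde f(z+u)\tilde f(z+v)\,d\rho(u,v)\,d\nu_Z(z), \]
where $\rho$ is the weak limit of the empirical measures $\frac1N\sum_{n\le N}\delta_{((p_n-1)\alpha,(p_n-1)\beta)}$ on $Y_{T,S}$. I would prove this exactly as Lemma~\ref{contfunc}: by Stone--Weierstrass it suffices to treat $\eta=\chi_1\otimes\chi_2$ with $\chi_i$ characters of $Z$; absorbing the characters via $g_0=\overline{\chi_1\chi_2}\,f$, $g_1=\chi_1 f$, $g_2=\chi_2 f$ turns the average into $\int_X g_0\cdot\frac1N\sum_n T^{p_n-1}g_1\cdot S^{p_n-1}g_2\,d\mu$; Lemma~\ref{commutingle} shows the Kronecker factor is characteristic for $\frac1N\sum_n T^{p_n-1}g_1 S^{p_n-1}g_2$, and the equidistribution of $((p_n-1)\alpha,(p_n-1)\beta)$ in $Y_{T,S}$ (Corollary~1.4 of \cite{le}, whose hypotheses hold by total ergodicity) identifies the resulting $L^2(\mu)$-limit as the integral against $\rho$. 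The existence of $\rho$, and of the relevant $L^2$-limit, is part of the same input (cf. \cite{fhk},\cite{le}).

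With this in hand the argument closes as in Theorem~\ref{largelinear}. Since $0\le\tilde f\le 1$, Jensen's inequality gives $\int_Z\tilde f^3\,d\nu_Z\ge\big(\int_Z\tilde f\,d\nu_Z\big)^3=\mu(A)^3$, so by continuity $\int_Z\tilde f(z)\tilde f(z+u)\tilde f(z+v)\,d\nu_Z(z)\ge\mu(A)^3-\varepsilon/2$ for $(u,v)$ in a neighbourhood of the identity of $Z\times Z$; by Urysohn's lemma I pick a continuous $\eta:Y_{T,S}\to[0,\infty)$ supported in that neighbourhood with $\int_{Y_{T,S}}\eta\,d\rho=1$ — possible because $\rho$ charges every neighbourhood of the identity (see below). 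Feeding this $\eta$ into the shifted-prime Lemma~\ref{contfunc} yields
\[ \lim_{N\to\infty}\frac1N\sum_{n=1}^N\eta\big((p_n-1)\alpha,(p_n-1)\beta\big)a(p_n-1)\ \ge\ \mu(A)^3-\varepsilon/2 . \]
On the other hand, along $N_k$, bounding $a(p_n-1)$ by $\mu(A)^3-\varepsilon$ off a set of density tending to $0$, using $\eta\ge 0$ and $\frac1N\sum_{n\le N}\eta\big((p_n-1)\alpha,(p_n-1)\beta\big)\to\int_{Y_{T,S}}\eta\,d\rho=1$, the same averages are at most $\mu(A)^3-\varepsilon+o(1)$; this contradicts the previous display, and the theorem follows.

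The main obstacle — and the only place the prime-specific input is genuinely needed — is the claim that $\rho$ puts positive mass on every neighbourhood of the identity of $Y_{T,S}$; this is delicate because $Y_{T,S}$ need not be connected even when $T,S,TS^{-1}$ are totally ergodic (for instance $Z=\T$, $\alpha$ irrational, $\beta=-\alpha+1/m$, where $Y_{T,S}$ has $m$ components). The point is that on the identity component $\rho$ restricts to Haar measure, by Vinogradov-type equidistribution of primes — here total ergodicity guarantees that every nontrivial character of $Z$ sends $\alpha$ and $\beta$ to irrational rotations — while the identity component is reached along the primes $p_n$ satisfying a congruence $p_n\equiv 1$ modulo a suitable integer (for the profinite directions of $Y_{T,S}$), a set of positive density by Dirichlet's theorem. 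Hence $\rho(\mathrm{supp}\,\eta)>0$, the normalisation of $\eta$ above is legitimate, and this component phenomenon is also why one should only expect \emph{positive lower density} in the conclusion.
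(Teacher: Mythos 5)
Your argument is essentially correct, but it takes a genuinely different route from the paper. The paper deduces the result from its decomposition machinery: it writes $\phi(n)=\mu(A\cap T^{-n}A\cap S^{-n}A)=a_{\textrm{st}}(n)+a_{\textrm{er}}(n)$ via Theorem \ref{mainthm}, shows (Theorem \ref{legeneral}) that $a_{\textrm{er}}$ remains a nullsequence along $p_n-1$, approximates $a_{\textrm{st}}$ uniformly by a genuine $2$-step nilsequence $F(\tau^n\Gamma)$ on a nilmanifold with $d$ connected components, uses the totally ergodic strengthening of Theorem \ref{largelinear} to find large values of $F$ at times $dn$ (hence on the identity component $Y_0$), and then invokes Corollary 1.4 of \cite{le}: $(\tau^{p_n-1}\Gamma)$ equidistributes on $Y_0$ along $p_n\equiv 1\pmod d$, which yields the positive lower density. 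You instead bypass the nilsequence decomposition entirely and run the proof of Theorem \ref{largelinear} directly along the shifted primes, replacing Lemma \ref{contfunc} by a shifted-prime analogue built from Proposition \ref{removinglambda}/Lemma \ref{commutingle} (both in the paper) together with the limiting distribution $\rho$ of $\bigl((p_n-1)\alpha,(p_n-1)\beta\bigr)$ on $Y_{T,S}$. This stays entirely at the level of the Kronecker factor, which indeed suffices for the exponent $3$, and is in that sense more economical; the price is that all the prime-specific difficulty is concentrated in your claim that $\rho$ charges every neighbourhood of the identity of $Y_{T,S}$, whereas the paper sidesteps this by working on a single nilmanifold approximation, which automatically has only finitely many components.

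On that delicate point your justification is slightly imprecise: when $Y_{T,S}/Y_0$ is an infinite procyclic group (infinitely many connected components), there is no single modulus whose congruence class lands you in $Y_0$, and in fact $\rho(Y_0)$ can be $0$, so the statement ``on the identity component $\rho$ restricts to Haar measure and that component is reached along $p_n\equiv 1\pmod m$'' does not literally apply. The claim you need, $\rho(U)>0$ for every neighbourhood $U$ of the identity, is nevertheless true and the fix is routine: a neighbourhood base at the identity is given by sets $\{y: |\chi_i(y)-1|<\delta,\ i\le k\}$ for finitely many characters $\chi_i$ of $Y_{T,S}$; the image of $Y_{T,S}$ under $(\chi_1,\dots,\chi_k)$ is a closed subgroup of $\T^k$, hence a Lie group with finitely many components, say $m$ of them, and there your Dirichlet-plus-Vinogradov argument works verbatim (characters nontrivial on the identity component of the image have irrational rotation number, so primes in the progression $p\equiv 1\pmod m$ equidistribute there). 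With this finite-dimensional projection step supplied, and with the identification of the $L^2$-limit against $\rho$ carried out by expanding $\tilde f$ in characters of $Z$ (the conditional expectations are only bounded, not continuous, so equidistribution must be applied through this Fourier approximation), your proof is complete and correct.
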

\begin{proof} We follow the method described in \cite{dmls}. Let $\phi(n):=\mu(A \cap T^{-n}A\cap S^{-n}A)$, then by Theorem \ref{mainthm}, $\phi(n)=a_{\textrm{st}}(n)+a_{\textrm{er}}(n)$, where $a_{\textrm{st}}(n)$ is a uniform limit of $2$-step nilsequences and, by Theorem \ref{legeneral}, we have
	\[ \lim_{N \to \infty} \frac{1}{N}\sum_{n=1}^N |a_{\textrm{er}}(n)|=\lim_{N \to \infty}\frac{1}{N}\sum_{n=1}^N|a_{\textrm{er}}(p_n-1)|=0.\]
	Let $\varepsilon>0$. Since $a_{\textrm{st}}(n)$ is a uniform limit of $2$-step nilsequences it can be approximated by a nilsequence $F(\tau^n\Gamma)$, where $Y=G/\Gamma$ is a $2$-step nilmanifold, $F \in C(Y)$, and $\tau$ acts ergodically on $Y$, which we assume has $d$ connected components. Moreover, we can assume that $|F(\tau^n\Gamma)-a_{\textrm{st}}(n)|<\varepsilon/4$ for all $n \in \N$. We further suppose that $\Gamma \in Y$.
	\\ \\
	Note that in Theorem \ref{largelinear} if one strengthens the assumptions to $T, S, TS^{-1}$ being totally ergodic, then the set $S_d:=\{ n \in \N : \mu(A \cap T^{-dn}A \cap S^{-dn} A)=\phi(dn)>\mu(A)^3-\varepsilon\}$ is syndetic. Thus,
	\begin{equation}
		\lim_{N \to \infty} \frac{1}{|\{1,\dots,N\} \cap S_d|}\sum_{0\leq n<N, n \in S_d} |a_{\textrm{er}}(dn)|=0, \textrm{ whence}
	\end{equation}
	\begin{equation}
		\limsup_{N \to \infty} \frac{1}{|\{1,\dots,N\} \cap S_d|}\sum_{0\leq n<N, n \in S_d}|\phi(dn)-F(\tau^{dn}\Gamma)|<\frac{\varepsilon}{4}.
	\end{equation}
	This, in turn, implies that there is some $n \in \N$ for which $F(\tau^{dn}\Gamma)>\mu^3(A)-\frac{\varepsilon}{2}$. Since $\tau^{dn}\Gamma \in Y_0$, we can find an open set $U$ of $Y_0$ such that $F>\mu^3(A)-\frac{3\varepsilon}{4}$ on $U$. By Corollary 1.4 in \cite{le}, the sequence $(\tau^{p_n-1}\Gamma)$ is equidistributed on $Y_0$ when restricted to $p_n=1 \pmod{d}$. Hence, the set $R:=\{ n \in \N : \tau^{p_n-1}\Gamma \in U\}$ has positive lower density, and for every $n \in R$ we have $F(\tau^{p_n-1}\Gamma)>\mu^3(A)-\frac{3\varepsilon}{4}$. Moreover, the set of $R':=\{n \in \N : \phi(p_n-1)<\mu^3(A)-\varepsilon\}$ has $0$ density. The set $R \setminus R'$ gives the desired result.
\end{proof}
\bibliographystyle{alpha}

\begin{thebibliography}{10}
	\bibitem[B1]{wmpet} V. Bergelson, Weakly mixing PET, Ergodic Theory Dynam. Systems {\bf 7} (1987), no.~3, 337--349.
	\bibitem[BL1]{bl1} V. Bergelson\ and\ A. Leibman, Polynomial extensions of van der Waerden's and Szemer\'{e}di's theorems, J. Amer. Math. Soc. {\bf 9} (1996), no.~3, 725--753.
	\bibitem[BL2]{bl} V. Bergelson\ and\ A. Leibman, Cubic averages and large intersections, in {\it Recent trends in ergodic theory and dynamical systems}, 5--19, Contemp. Math., 631, Amer. Math. Soc., Providence, RI.
	\bibitem[BHK]{bhk} V. Bergelson, B. Host\ and\ B. Kra, Multiple recurrence and nilsequences. With an appendix by Imre Rusza, Invent. Math. {\bf  160} (2005), no.~2, 261--303.
	\bibitem[BTZ]{btz} V. Bergelson, T. Tao\ and\ T. Ziegler, Multiple recurrence and convergence results associated to $\Bbb {F}_p^\omega$-actions, J. Anal. Math. {\bf 127} (2015), 329--378.
	\bibitem[C]{chu} Q. Chu, Multiple recurrence for two commuting transformations, Ergodic Theory Dynam. Systems {\bf 31} (2011), no.~3, 771--792.
	\bibitem[DS]{sundonoso} S. Donoso\ and\ W. Sun, Quantitative multiple recurrence for two and three transformations, Israel J. Math. {\bf 226} (2018), no.~1, 71--85.
	\bibitem[DMLS]{dmls} S. Donoso, J. Moreira, A. N. Le and W. Sun, Optimal lower bounds for multiple recurrence, Ergodic Theory and Dynamical Systems (2019), 1-29.
	\bibitem[F1]{fra1} N. Frantzikinakis, Multiple correlation sequences and nilsequences, Invent. Math. {\bf 202} (2015), no.~2, 875--892.
	\bibitem[F2]{fra2} N. Frantzikinakis, Multiple ergodic averages for three polynomials and applications, Trans. Amer. Math. Soc. {\bf 360} (2008), no.~10, 5435--5475.
	\bibitem[FH]{hostfra} N. Frantzikinakis\ and\ B. Host, Weighted multiple ergodic averages and correlation sequences, Ergodic Theory Dynam. Systems {\bf 38} (2018), no.~1, 81--142.
	\bibitem[FHK]{fhk} N. Frantzikinakis, B. Host\ and\ B. Kra, The polynomial multidimensional Szemer\'{e}di theorem along shifted primes, Israel J. Math. {\bf 194} (2013), no.~1, 331--348.
	\bibitem[FK]{frakra} N. Frantzikinakis\ and\ B. Kra, Convergence of multiple ergodic averages for some commuting transformations, Ergodic Theory Dynam. Systems {\bf 25} (2005), no.~3, 799--809.
	\bibitem[G]{gowers} W. T. Gowers, A new proof of Szemer\'{e}di's theorem, Geom. Funct. Anal. {\bf 11} (2001), no.~3, 465--588.
	\bibitem[Gr]{griesmer} J. T. Griesmer, {\it Ergodic averages, correlation sequences, and sumsets}, ProQuest LLC, Ann Arbor, MI, 2009.
	\bibitem[H]{host1} B. Host, Ergodic seminorms for commuting transformations and applications, Studia Math. {\bf 195} (2009), no.~1, 31--49.
	\bibitem[HK1]{hostkraoriginal} B. Host\ and\ B. Kra, Nonconventional ergodic averages and nilmanifolds, Ann. of Math. (2) {\bf 161} (2005), no.~1, 397--488.
	\bibitem[HK2]{hostkrabook}  B. Host\ and\ B. Kra, {\it Nilpotent structures in ergodic theory}, Mathematical Surveys and Monographs, 236, American Mathematical Society, Providence, RI, 2018.
	\bibitem[J]{mcrjohnson} M. C. R. Johnson, Convergence of polynomial ergodic averages of several variables for some commuting transformations, Illinois J. Math. {\bf 53} (2009), no.~3, 865--882. 
	\bibitem[K]{khintchine} A. Khintchine, The Method of Spectral Reduction in Classical Dynamics, Proceedings of the National Academy of Sciences, May 1933, {\bf 19} (5) 567-573.
	\bibitem[KovN]{koopmanvonneumann} B. O. Koopman, J. von Neumann, Dynamical Systems of Continuous Spectra, Proceedings of the National Academy of Sciences, Mar 1932, {\bf 18} (3) 255-263.
	\bibitem[L]{le} A. N. Le, Nilsequences and multiple correlations along subsequences, Ergodic Theory Dynam. Systems {\bf 40} (2020), no.~6, 1634--1654. 
	\bibitem[Lei]{leibman1} A. Leibman, Multiple polynomial correlation sequences and nilsequences, Ergodic Theory Dynam. Systems {\bf 30} (2010), no.~3, 841--854.
	\bibitem[Lei2]{leibman2} A. Leibman, Nilsequences, null-sequences, and multiple correlation sequences, Ergodic Theory Dynam. Systems {\bf 35} (2015), no.~1, 176--191.
	\bibitem[W]{walters} P. Walters, {\it An introduction to ergodic theory}, Graduate Texts in Mathematics, 79, Springer-Verlag, New York, 1982.
\end{thebibliography}

\end{document}